\newtheorem{pro}{Proposition}
\newtheorem{lem}[pro]{Lemma}
\newtheorem{thm}[pro]{Theorem}
\journal{Journal of Computational Statistics and Data Analysis}
\begin{document}

\begin{frontmatter}

\title{On the sample mean after a group sequential trial\footnote{A supplementary file with data accompanies the paper.}}

\author{Ben Berckmoes\footnote{
postal address: Universiteit Antwerpen, Departement Wiskunde-Informatica, Middelheimlaan 1, 2020 Antwerpen, Belgium; email: ben.berckmoes@uantwerpen.be.}, Anna Ivanova, Geert Molenberghs}

\begin{abstract}
A popular setting in medical statistics is a group sequential trial with independent and identically distributed normal outcomes, in which interim analyses of the sum of the outcomes are performed. Based on a prescribed stopping rule, one decides after each interim analysis whether the trial is stopped or continued. Consequently, the actual length of the study is a random variable. It is reported in the literature that the interim analyses may cause bias if one uses the ordinary sample mean to estimate the location parameter. For a generic stopping rule, which contains many classical stopping rules as a special case, explicit formulas for the expected length of the trial, the bias, and the mean squared error (MSE) are provided. It is deduced that, for a fixed number of interim analyses, the bias and the MSE converge to zero if the first interim analysis is performed not too early. In addition, optimal rates for this convergence are provided.  Furthermore, under a regularity condition, asymptotic normality in total variation distance for the sample mean is established. A conclusion for naive confidence intervals based on the sample mean is derived. It is also shown how the developed theory naturally fits in the broader framework of likelihood theory in a group sequential trial setting. A simulation study underpins the theoretical findings.
\end{abstract}

\begin{keyword}
bias, confidence interval, group sequential trial, likelihood theory, mean squared error, sample mean
\end{keyword}

\end{frontmatter}

\linenumbers

\section{Introduction}\label{sec:Intro}

Throughout the paper, $X_1, X_2, \ldots$ will be a fixed sequence of independent and identically distributed random variables with law $N(\mu,\sigma^2)$, and $\psi_1, \psi_2, \ldots$ a fixed sequence of Borel measurable maps of $\mathbb{R}$ into $[0,1]$. 

For natural numbers $L \in \mathbb{N}_0$ and $0 < m_1 < m_2 < \ldots < m_L < n$, we consider a random sample size $N$ with the following properties:
\begin{itemize}
\item[(a)] $\displaystyle{N \text{ can take the values } m_1, m_2, \ldots, m_L, n,}\label{eq:ValuesN}$

\item[(b)] $\displaystyle{\forall i \in \{1,\ldots,L\} : \{N = m_i\} \text{ is independent of } X_{m_{i}+1}, X_{m_{i} + 2},\ldots,}$

\item[(c)] $\displaystyle{\forall i \in \{1,\ldots,L\}: \mathbb{P}\left[N = m_i \mid X_1, \ldots, X_{m_i}\right] = \psi_{m_i}(K_{m_i})\prod_{j=1}^{i - 1} \left[1 - \psi_{m_j}(K_{m_j})\right],}$
where we denote $K_m = \sum_{i=1}^m X_i$ and the empty product is 1.
\end{itemize}

The above setting serves as a paradigm for a group sequential trial of random length $N$ with outcomes $X_1,X_2,\ldots$ At each $m_i$, an interim analysis of the sum $K_{m_i}$ of the outcomes is performed and, based on the generic stopping rule (c), one decides whether the trial is stopped, i.e. $N = m_i$, or continued, i.e. $N > m_i$.

Note that the product in (c) is merely the usual decomposition of the conditional probability to reach a certain sample size and the product of conditional probabilities of continuing at smaller sample sizes, given that the trial is ongoing. This is similar to decompositions encountered in longitudinal or time-series transition models, and dropout models in longitudinal studies. It follows the law of total probability.

More precisely, at the $i$-th interim analysis only the values of the full sums $K_{m_1}, \ldots, K_{m_i}$ have been analyzed. Therefore,
\begin{eqnarray*}
\lefteqn{\mathbb{P}\left[N = m_i \mid X_1, \ldots, X_{m_i}\right]} \\
&=& \mathbb{P}\left[N = m_i \mid K_{m_1}, \ldots, K_{m_i}\right] \\
&=& \mathbb{P}\left[N = m_i, N \neq m_{i - 1}, \ldots, N \neq m_1 \mid K_{m_1}, \ldots, K_{m_i}\right],
\end{eqnarray*}
which, by the law of total probability,
\begin{eqnarray*}
&=& \mathbb{P}\left[N = m_i \mid N \neq m_{i -1}, \ldots, N \neq m_1, K_{m_1}, \ldots, K_{m_i}\right]\\
&& \prod_{j=1}^{i - 1} \mathbb{P}\left[N \neq m_j \mid N \neq m_{j - 1}, \ldots, N \neq m_1, K_{m_1}, \ldots, K_{m_i}\right],
\end{eqnarray*}
which, because, given that $N \neq m_{j - 1}, \ldots, N \neq m_1$, the event $\{N = m_j\}$ only depends on the analysis of the full sum $K_{m_j}$,
$$=  \mathbb{P}\left[N = m_i \mid N \neq m_{i -1}, \ldots, N \neq m_1 , K_{m_i}\right]  \prod_{j=1}^{i - 1} \mathbb{P}\left[N \neq m_j \mid N \neq m_{j - 1}, \ldots, N \neq m_1, K_{m_j}\right],$$
which is exactly the decomposition in (c).

We wish to highlight that the above model contains very useful settings that are extensively studied in the literature. To illustrate this, we let, for each $m$, 
\begin{equation*}
\psi_m(x) = 1_{\{\left|\cdot\right| \geq C_m\}}(x) = \left\{\begin{array}{clrr}      
1 &\textrm{ if }& \left|x\right| \geq C_m\\       
0 &\textrm{ if }&  \left|x\right| < C_m
\end{array}\right.,\label{eq:SimpeleBenadering}
\end{equation*}
with $C_m \in \mathbb{R}^+_0$ a constant.
For these choices of $\psi_m$, expression (c) is turned into
\begin{eqnarray*}
\lefteqn{\mathbb{P}[N = m_i \mid X_1, \ldots, X_{m_i}]}\label{eq:StoppingRuleSpecific}\\
&=& 1_{\{\left|\cdot\right| \geq C_{m_i}\}}(K_{m_i})\prod_{j=1}^{i - 1} \left[1 - 1_{\{\left|\cdot\right| \geq C_{m_j}\}}(K_{m_j})\right]\\
 &=& \left\{\begin{array}{clrr}     
1 &\textrm{ if }& \left|K_{m_i} \right| \geq C_{m_i} \text{ and } \forall j \in \{1,\ldots,i-1\} : \left|K_{m_j} \right| <  C_{m_j}\\       
0 &\textrm{ otherwise }&  \nonumber 
\end{array}\right..\nonumber
\end{eqnarray*}
So this corresponds to a trial which is stopped either at the first $m_i$ for which $\left|K_{m_i}\right| \geq C_{m_i}$, or at $n$. If, for a fixed constant $C \in \mathbb{R}^+_0$, $C_m = \sigma C \sqrt{m}$, this setting corresponds to the Pocock boundaries, studied in e.g. \cite{S78} and \cite{C89}, and, if, for a fixed constant $C \in \mathbb{R}^+_0$, $C_m = C$, this setting corresponds to the O'Brien-Fleming boundaries, studied in e.g. \cite{W92}. More generally, taking $\psi_m = 1_{\mathcal{S}_m}$ with $\mathcal{S}_m \subset \mathbb{R}$ a Borel measurable set, leads to the setting studied in e.g. \cite{EF90} and \cite{LH99}. Finally, taking $\psi_m(x) = \Phi\left(\alpha + \beta m^{-1} x\right)$ with $\Phi$ the standard normal cumulative distribution function and $\alpha,\beta$ real numbers, corresponds to the probabilistic stopping rule setting studied in e.g. \cite{MKA14}.

In this paper, we will study the ordinary sample mean $\widehat{\mu}_N = \frac{1}{N} K_N$. It is reported in the literature that in the above described group sequential trial setting, bias may occur if $\widehat{\mu}_N$ is used to estimate $\mu$ (\cite{HP88,EF90,LH99}). However, it was shown recently in \cite{MKA14} that if $N$ only takes the values $m$ and $2m$, and $\psi_m(x)$ takes the form $\Phi\left(\alpha + \beta m^{-1} x\right)$ or $\lim_{\beta \rightarrow \infty} \Phi\left(\alpha + \beta m^{-1} x\right) = 1_{\{\cdot \geq 0\}}(x)$, this bias vanishes as $m$ tends to $\infty$. In this paper, we will establish explicit formulas for the expected length of the trial, the bias, and the mean squared error (MSE) in the general case, described by (a), (b), and (c). We deduce that, for fixed $L$, if $m_1 \to \infty$ (and hence $\forall i : m_i \rightarrow \infty$ and $n \rightarrow \infty$), the bias vanishes with rate $1/\sqrt{m_1}$, and the MSE vanishes with rate $1/m_1$. We will show that both rates are optimal. Furthermore,  under a regularity condition, we will establish asymptotic normality in total variation distance for the sample mean if, for fixed $L$ and $m_1,\ldots,m_L$, $n \to \infty$. In some cases, this validates the use of naive confidence intervals based on the sample mean if $n$ is large.

The paper is structured as follows. In section \ref{sec:NormalTransform}, we introduce the normal transform of a finite tuple of bounded Borel measurable maps of $\mathbb{R}$ into $\mathbb{R}$, for which we establish a recursive formula. We use the normal transform in section \ref{sec:JointDensity} to obtain an explicit formula for the joint density of $N$ and $K_N$. We establish a fundamental result in section \ref{sec:FunRes}, which is used to calculate the expected length of the trial in section \ref{sec:EL}, and the bias and the MSE in section \ref{sec:Bias}. It is shown that, for fixed $L$, if $m_1 \to \infty$ (and hence $\forall i : m_i \rightarrow \infty$ and $n \rightarrow \infty$), the bias vanishes with rate $1/\sqrt{m_1}$, and the MSE vanishes with rate $1/m_1$. Both rates are shown to be optimal. In section \ref{sec:AsymNor}, under a regularity condition, we establish asymptotic normality in total variation distance for $\widehat{\mu}_N$ if, for fixed $L$ and $m_1,\ldots,m_L$, $n \rightarrow \infty$. We also derive a conclusion for naive confidence intervals based on $\widehat{\mu}_N$. In section \ref{sec:LHT}, we show how the theory developed in this paper fits in the broader framework of likelihood theory. A simulation study, which underpins our theoretical results, is conducted in section \ref{sec:Simulations}. Finally, some concluding remarks are formulated in section \ref{sec:ConRem}.

\section{The normal transform}\label{sec:NormalTransform}

Let $\phi$ be the standard normal density. For a finite tuple $B = (b_1,\ldots,b_i)$ of bounded Borel measurable maps of $\mathbb{R}$ into $\mathbb{R}$, we define the {\em normal transform} to be the map $\mathcal{N}_{B,\mu,\sigma}$ of $]0,\infty[^{i + 1} \times \mathbb{R}$ into $\mathbb{R}$ given by
\begin{eqnarray}
\lefteqn{\mathcal{N}_{B,\mu,\sigma}(x_1,\ldots,x_{i+1},x)}\label{eq:NormalTransform}\\ 
&=& \frac{\int_{-\infty}^\infty \ldots \int_{-\infty}^\infty \prod_{j=1}^i \frac{\phi\left(\frac{z_j - \mu x_j}{\sigma \sqrt{x_j}}\right)}{\sigma \sqrt{x_j}} b_j\left(\sum_{k=1}^j z_k\right) \frac{\phi\left(\frac{x - \sum_{k=1}^i z_k - \mu x_{i + 1}}{\sigma \sqrt{x_{i + 1}}}\right)}{\sigma \sqrt{x_{i + 1}}} dz_i \ldots dz_1}{\frac{1}{\sigma \sqrt{\sum_{k=1}^{i + 1} x_k}}\phi\left(\frac{x - \mu\sum_{k=1}^{i + 1} x_k}{\sigma \sqrt{\sum_{k=1}^{i+1} x_k}}\right)}.\nonumber
 \end{eqnarray}
 
 We will provide a recursive formula for the normal transform in Theorem \ref{thm:RecursiveDescriptionNormalTransform}. We need two lemmas.
 
\begin{lem}
For $x_1,x_2 \in ]0,\infty[$ and $x,z \in \mathbb{R}$,
\begin{equation}
\phi\left(\frac{z - \mu x_1}{\sigma\sqrt{x_1}}\right)  \phi\left(\frac{x - z - \mu x_2}{\sigma\sqrt{x_2}}\right) = \phi\left(\frac{x - \mu (x_1+x_2)}{\sigma\sqrt{x_1 + x_2}}\right) \phi\left(\frac{\frac{x_1 + x_2}{x_1} z - x}{\sigma \sqrt{ \frac{x_2(x_1 + x_2)}{x_1}}}\right).\label{eq:TwoGaussianPdf}
\end{equation}
\end{lem}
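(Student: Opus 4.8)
The identity \eqref{eq:TwoGaussianPdf} is, at heart, the familiar factorization of a bivariate Gaussian density, and recognizing this is the quickest way to see why it should hold. If $Z_1 \sim N(\mu x_1, \sigma^2 x_1)$ and $Z_2 \sim N(\mu x_2, \sigma^2 x_2)$ are independent, then the left-hand side is, up to the normalizing prefactors, the joint density of $(Z_1, Z_1 + Z_2)$ at $(z, x)$, while the right-hand side splits this as the marginal density of the sum $Z_1 + Z_2 \sim N(\mu(x_1 + x_2), \sigma^2(x_1 + x_2))$ at $x$ times a Gaussian kernel in $z$ centred at the conditional mean $\frac{x_1}{x_1 + x_2}x$. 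The argument of the second $\phi$ on the right is rescaled precisely so that each factor remains a \emph{standard} normal density, which is what makes the bookkeeping of the normalizing constants trivial. While this probabilistic picture motivates the statement, the cleanest rigorous route is a direct verification, which I sketch now.

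Since $\phi(t) = (2\pi)^{-1/2}e^{-t^2/2}$ and each side of \eqref{eq:TwoGaussianPdf} is a product of exactly two factors $\phi(\cdot)$, both sides carry the same constant prefactor $(2\pi)^{-1}$, which cancels. The claim thus reduces to the equality of the two quadratic forms appearing in the exponents; multiplying these by $-2\sigma^2$ makes $\sigma$ drop out entirely, so it suffices to prove the purely algebraic identity
\begin{equation*}
\frac{(z - \mu x_1)^2}{x_1} + \frac{(x - z - \mu x_2)^2}{x_2} = \frac{(x - \mu(x_1 + x_2))^2}{x_1 + x_2} + \frac{x_1}{x_2(x_1 + x_2)}\left(\frac{x_1 + x_2}{x_1}z - x\right)^2.
\end{equation*}

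Finally, I would treat both sides as quadratic polynomials in $z$, with $x$, $x_1$, $x_2$, and $\mu$ as parameters, and match the coefficients of $z^2$, $z^1$, and $z^0$. The coefficient of $z^2$ is $\frac{1}{x_1} + \frac{1}{x_2} = \frac{x_1 + x_2}{x_1 x_2}$ on both sides, and the coefficient of $z$ is $-\frac{2x}{x_2}$ on both sides, the $\mu$-contributions cancelling on the left. The only point requiring a little care — and the closest thing to an obstacle in an otherwise routine computation — is the constant term, where the two contributions to the $x^2$-coefficient on the right must be combined via $\frac{x^2}{x_1 + x_2} + \frac{x_1 x^2}{x_2(x_1 + x_2)} = \frac{x^2}{x_2}$; both constant terms then reduce to $\frac{x^2}{x_2} - 2\mu x + \mu^2(x_1 + x_2)$, which finishes the verification.
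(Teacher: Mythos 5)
Your proposal is correct and matches the paper's approach: the paper's proof is simply the assertion that the identity ``is readily verified by a straightforward calculation,'' and your reduction to the equality of the two quadratic forms in $z$, followed by matching the coefficients of $z^2$, $z$, and the constant term, is exactly that calculation carried out in full (all three coefficient comparisons check out, including the combination $\frac{x^2}{x_1+x_2} + \frac{x_1 x^2}{x_2(x_1+x_2)} = \frac{x^2}{x_2}$). The probabilistic interpretation via the joint density of $(Z_1, Z_1+Z_2)$ is a nice added motivation but plays no logical role in the verification.
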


\begin{proof}
This is readily verified by a straightforward calculation.
\end{proof}

\begin{lem}\label{lem:IntTwoGaussians}
Let $\xi$ be a random variable with law $N(0,1)$. For a bounded Borel measurable map $b$ of $\mathbb{R}$ into $\mathbb{R}$, $x_1,x_2 \in \left]0,\infty\right[$, and $x \in \mathbb{R}$,
\begin{eqnarray}
\lefteqn{\int_{-\infty}^\infty \frac{1}{\sigma \sqrt{x_1}}\phi\left(\frac{z - \mu x_1}{\sigma \sqrt{x_1}}\right) b(z) \frac{1}{\sigma \sqrt{x_2}} \phi\left(\frac{x - z - \mu x_2}{\sigma \sqrt{x_2}}\right) dz}\label{eq:Integral2}\\
&=& \frac{1}{\sigma \sqrt{x_1 + x_2}} \phi\left(\frac{x - \mu (x_1 + x_2)}{\sigma \sqrt{x_1 + x_2}}\right)  \mathbb{E}\left[b\left(\frac{x_1}{x_1 + x_2} x + \sigma \sqrt{\frac{x_1 x_2}{x_1 + x_2}} \xi\right)\right].\nonumber
\end{eqnarray}
\end{lem}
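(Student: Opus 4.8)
The plan is to prove Lemma~\ref{lem:IntTwoGaussians} by combining the pointwise Gaussian identity from the previous lemma with the recognition of a normal density inside the integral. First I would invoke equation~\eqref{eq:TwoGaussianPdf} to rewrite the product of the two standard normal densities appearing in the integrand. Concretely, multiplying both sides of \eqref{eq:TwoGaussianPdf} by the normalizing factor $1/(\sigma^2\sqrt{x_1 x_2})$ converts the product $\frac{1}{\sigma\sqrt{x_1}}\phi\bigl(\frac{z-\mu x_1}{\sigma\sqrt{x_1}}\bigr)\frac{1}{\sigma\sqrt{x_2}}\phi\bigl(\frac{x-z-\mu x_2}{\sigma\sqrt{x_2}}\bigr)$ into $\frac{1}{\sigma\sqrt{x_1+x_2}}\phi\bigl(\frac{x-\mu(x_1+x_2)}{\sigma\sqrt{x_1+x_2}}\bigr)$ times a factor in $z$ that I claim is, up to the proper normalization, a Gaussian density in $z$.

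Next I would identify that remaining $z$-dependent factor as the density of a normal random variable. The factor $\phi\bigl(\frac{(x_1+x_2)z/x_1 - x}{\sigma\sqrt{x_2(x_1+x_2)/x_1}}\bigr)$ is proportional to the density of a normal law in $z$; solving for the mean and variance by matching the argument to the standard form gives mean $\frac{x_1}{x_1+x_2}x$ and variance $\sigma^2\frac{x_1 x_2}{x_1+x_2}$. The key step is to track the Jacobian carefully: since the argument of $\phi$ is linear in $z$ with slope $\frac{x_1+x_2}{x_1}\big/\bigl(\sigma\sqrt{x_2(x_1+x_2)/x_1}\bigr)$, the correct normalized density in $z$ is
\begin{equation*}
\frac{1}{\sigma\sqrt{x_1 x_2/(x_1+x_2)}}\,\phi\!\left(\frac{z - \frac{x_1}{x_1+x_2}x}{\sigma\sqrt{x_1 x_2/(x_1+x_2)}}\right),
\end{equation*}
and one verifies the leftover constant factors cancel exactly against $1/(\sigma^2\sqrt{x_1 x_2})$ and the normalization of this density. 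This is the one computation that must be done with care.

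With the integrand rewritten as a ($z$-independent) Gaussian prefactor times $b(z)$ times a normal density in $z$, I would pull the prefactor $\frac{1}{\sigma\sqrt{x_1+x_2}}\phi\bigl(\frac{x-\mu(x_1+x_2)}{\sigma\sqrt{x_1+x_2}}\bigr)$ outside the integral and recognize the remaining integral $\int_{-\infty}^\infty b(z)\,\frac{1}{\sigma\sqrt{x_1 x_2/(x_1+x_2)}}\phi\bigl(\frac{z-\frac{x_1}{x_1+x_2}x}{\sigma\sqrt{x_1 x_2/(x_1+x_2)}}\bigr)\,dz$ as the expectation of $b$ evaluated at a normal random variable with mean $\frac{x_1}{x_1+x_2}x$ and standard deviation $\sigma\sqrt{x_1 x_2/(x_1+x_2)}$. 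Writing such a variable as $\frac{x_1}{x_1+x_2}x + \sigma\sqrt{\frac{x_1 x_2}{x_1+x_2}}\,\xi$ with $\xi\sim N(0,1)$ yields exactly the right-hand side of \eqref{eq:Integral2}. Boundedness and Borel measurability of $b$ guarantee the integral converges and the expectation is well defined, so no additional integrability hypotheses are needed.

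I expect the main obstacle to be purely bookkeeping: correctly matching the variance parameter and confirming that all the $\sigma$, $x_1$, $x_2$ factors from the two $1/(\sigma\sqrt{x_j})$ normalizations, from \eqref{eq:TwoGaussianPdf}, and from the $z$-density normalization combine without a stray constant. Since the previous lemma already supplies the exact pointwise factorization, the proof reduces to this algebraic reconciliation together with the standard fact that integrating a bounded measurable function against a normal density computes an expectation.
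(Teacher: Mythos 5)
Your proposal is correct and follows essentially the same route as the paper: both invoke the factorization \eqref{eq:TwoGaussianPdf} and then convert the remaining $z$-integral into a Gaussian expectation, the only (cosmetic) difference being that the paper performs the explicit change of variables $u = \frac{\frac{x_1+x_2}{x_1}z - x}{\sigma\sqrt{x_2(x_1+x_2)/x_1}}$ whereas you renormalize the leftover factor as the $N\bigl(\frac{x_1}{x_1+x_2}x,\; \sigma^2\frac{x_1x_2}{x_1+x_2}\bigr)$ density in $z$ directly. Your bookkeeping of the constants (the cancellation of $\frac{1}{\sigma^2\sqrt{x_1x_2}}$ against the density normalization to leave $\frac{1}{\sigma\sqrt{x_1+x_2}}$) is exactly right.
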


\begin{proof}
By (\ref{eq:TwoGaussianPdf}),
\begin{eqnarray*}
\lefteqn{\int_{-\infty}^\infty \frac{1}{\sigma \sqrt{x_1}}\phi\left(\frac{z - \mu x_1}{\sigma \sqrt{x_1}}\right) b(z) \frac{1}{\sigma \sqrt{x_2}} \phi\left(\frac{x - z - \mu x_2}{\sigma \sqrt{x_2}}\right) dz}\\
&=& \frac{1}{\sigma^2 \sqrt{x_1 x_2}} \phi\left(\frac{x - \mu(x_1 + x_2)}{\sigma \sqrt{x_1 + x_2}}\right) \int_{-\infty}^\infty b(z) \phi\left(\frac{\frac{x_1 + x_2}{x_1} z - x}{\sigma \sqrt{\frac{x_2 (x_1 + x_2)}{x_1}}}\right) dz,
\end{eqnarray*}
which is seen to coincide with the right-hand side of (\ref{eq:Integral2}) after performing the change of variables $u = \frac{\frac{x_1 + x_2}{x_1} z - x}{\sigma \sqrt{\frac{x_2 (x_1 + x_2)}{x_1}}}$. This finishes the proof.
\end{proof}

\begin{thm}\label{thm:RecursiveDescriptionNormalTransform}
Let $\xi$ be a random variable with law $N(0,1)$. For a bounded Borel measurable map $b$ of $\mathbb{R}$ into $\mathbb{R}$, $x_1,x_2 \in \left]0,\infty\right[$, and $x \in \mathbb{R}$,
\begin{equation}
\mathcal{N}_{b,\mu,\sigma}(x_1,x_2,x) =  \mathbb{E}\left[b\left(\frac{x_1}{x_1 + x_2} x + \sigma \sqrt{\frac{x_1 x_2}{x_1 + x_2}} \xi\right)\right].\label{eq:NormalTransform1}
\end{equation}
Furthermore, for a natural number $i \geq 2$, a tuple $(b_1,\ldots,b_i)$ of bounded Borel measurable maps of $\mathbb{R}$ into $\mathbb{R}$, $x_1,\ldots,x_{i+1} \in \left]0,\infty\right[$, and $x \in \mathbb{R}$,
\begin{equation}
\mathcal{N}_{(b_1,\ldots,b_{i}), \mu,\sigma}(x_1,\ldots,x_{i+1},x) = \mathcal{N}_{(b_1,\ldots,b_{i-2},\widetilde{b}_{i-1}),\mu,\sigma}(x_1,\ldots,x_{i-1},x_{i} + x_{i + 1},x),\label{eq:NormRec}
\end{equation}
where
\begin{equation}
\widetilde{b}_{i-1}(z) = b_{i-1}(z) \mathbb{E}\left[b_i\left(\frac{x_{i+1}}{x_i + x_{i+1}} z + \frac{x_i}{x_i + x_{i + 1}} x + \sigma \sqrt{\frac{x_i x_{i+1}}{x_i + x_{i+1}}} \xi\right)\right].\label{eq:bTilde}
\end{equation}
\end{thm}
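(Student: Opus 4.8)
The plan is to establish the two assertions separately, using Lemma \ref{lem:IntTwoGaussians} as the only analytic input in each. For the base case (\ref{eq:NormalTransform1}), I would observe that when $i = 1$ the numerator in the definition (\ref{eq:NormalTransform}) of $\mathcal{N}_{b,\mu,\sigma}(x_1,x_2,x)$ is literally the left-hand side of (\ref{eq:Integral2}) with $b_1 = b$ and integration variable $z = z_1$, while the denominator is precisely the Gaussian prefactor $\frac{1}{\sigma\sqrt{x_1+x_2}}\phi\left(\frac{x-\mu(x_1+x_2)}{\sigma\sqrt{x_1+x_2}}\right)$ appearing on the right-hand side of (\ref{eq:Integral2}). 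Dividing the two cancels this Gaussian factor and leaves exactly the expectation $\mathbb{E}[b(\frac{x_1}{x_1+x_2}x + \sigma\sqrt{x_1 x_2/(x_1+x_2)}\,\xi)]$, which is (\ref{eq:NormalTransform1}).

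For the recursion (\ref{eq:NormRec}) with $i \geq 2$, I would first note that the denominators on the two sides coincide, since $\sum_{k=1}^{i+1} x_k = \sum_{k=1}^{i-1} x_k + (x_i + x_{i+1})$, so it suffices to show the numerators agree. The key move is to perform the innermost integration, the one over $z_i$, first; this is justified by Fubini's theorem, as $b_i$ is bounded and all the Gaussian densities are integrable. Writing $s = \sum_{k=1}^{i-1} z_k$ and isolating the three factors that depend on $z_i$, namely $\phi((z_i - \mu x_i)/(\sigma\sqrt{x_i}))/(\sigma\sqrt{x_i})$, the term $b_i(s + z_i)$ coming from $j=i$, and the final factor $\phi((x - s - z_i - \mu x_{i+1})/(\sigma\sqrt{x_{i+1}}))/(\sigma\sqrt{x_{i+1}})$, I would apply Lemma \ref{lem:IntTwoGaussians} with target point $x - s$ in place of $x$ and with the shifted function $z \mapsto b_i(s + z)$ in place of $b$ (treating $s$ as a constant throughout this integration).

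This yields the combined Gaussian factor $\phi((x - s - \mu(x_i+x_{i+1}))/(\sigma\sqrt{x_i+x_{i+1}}))/(\sigma\sqrt{x_i+x_{i+1}})$ times the expectation of $b_i$ evaluated at $s + \frac{x_i}{x_i+x_{i+1}}(x - s) + \sigma\sqrt{x_i x_{i+1}/(x_i+x_{i+1})}\,\xi$. The elementary simplification $s + \frac{x_i}{x_i+x_{i+1}}(x - s) = \frac{x_{i+1}}{x_i+x_{i+1}} s + \frac{x_i}{x_i+x_{i+1}} x$ identifies this expectation as exactly the factor multiplying $b_{i-1}$ in the definition (\ref{eq:bTilde}) of $\widetilde{b}_{i-1}$, evaluated at $s = \sum_{k=1}^{i-1} z_k$. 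Multiplying by the surviving factor $b_{i-1}(s)$ from the index $j=i-1$ turns the pair into $\widetilde{b}_{i-1}(s)$, while the combined Gaussian is precisely the final factor in the numerator of $\mathcal{N}_{(b_1,\ldots,b_{i-2},\widetilde{b}_{i-1}),\mu,\sigma}$ with merged variance parameter $x_i + x_{i+1}$. The remaining iterated integral over $z_1,\ldots,z_{i-1}$ is then verbatim the numerator of the right-hand side of (\ref{eq:NormRec}), and dividing by the common denominator completes the proof.

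I expect the only real difficulty to be bookkeeping rather than analysis: one must keep $s$ fixed while integrating over $z_i$, feed the shifted data $(x-s,\ b_i(s+\cdot\,))$ into Lemma \ref{lem:IntTwoGaussians}, and then recognize the resulting convex combination as the precise argument appearing in (\ref{eq:bTilde}). No new estimate is needed beyond the lemma, so the argument is essentially a careful reorganization of the multiple integral.
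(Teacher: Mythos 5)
Your proposal is correct and follows essentially the same route as the paper: the base case is read off from Lemma \ref{lem:IntTwoGaussians} by cancelling the Gaussian prefactor, and the recursion is obtained by integrating out $z_i$ first, applying Lemma \ref{lem:IntTwoGaussians} with the shifted target $x - \sum_{k=1}^{i-1} z_k$ and the shifted map $z \mapsto b_i\left(\sum_{k=1}^{i-1} z_k + z\right)$, and recognizing the resulting convex combination as the argument in (\ref{eq:bTilde}). The only cosmetic differences are that you reduce to equality of numerators up front and invoke Fubini explicitly, whereas the paper carries the Gaussian prefactor through and reassembles the definition (\ref{eq:NormalTransform}) at the end.
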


\begin{proof}
Formula (\ref{eq:NormalTransform1}) follows directly from Lemma \ref{lem:IntTwoGaussians}.

We now establish formula (\ref{eq:NormRec}). We have
\begin{eqnarray*}
&&\int_{-\infty}^\infty \ldots \int_{-\infty}^\infty \prod_{j=1}^i \frac{\phi\left(\frac{z_j - \mu x_j}{\sigma \sqrt{x_j}}\right)}{\sigma \sqrt{x_j}} b_j\left(\sum_{k=1}^j z_k\right) \frac{\phi\left(\frac{x - \sum_{k=1}^i z_k - \mu x_{i + 1}}{\sigma \sqrt{x_{i + 1}}}\right)}{\sigma \sqrt{x_{i + 1}}} dz_i \ldots dz_1\\
&&= \int_{-\infty}^\infty \ldots \int_{-\infty}^\infty \prod_{j=1}^{i-1} \frac{\phi\left(\frac{z_j - \mu x_j}{\sigma \sqrt{x_j}}\right)}{\sigma \sqrt{x_j}} b_j\left(\sum_{k=1}^j z_k\right)\\
&&\phantom{abit} \left(\int_{-\infty}^\infty  \frac{\phi\left(\frac{z_i - \mu x_i}{\sigma \sqrt{x_i}}\right)}{\sigma \sqrt{x_i}} b_i \left(\sum_{k=1}^i z_k\right) \frac{\phi\left(\frac{x - \sum_{k=1}^i z_k - \mu x_{i + 1}}{\sigma \sqrt{x_{i + 1}}}\right)}{\sigma \sqrt{x_{i + 1}}} dz_i\right)dz_{i-1} \ldots dz_i,
\end{eqnarray*}
which, applying (\ref{eq:Integral2}) to the map $b(z) = b_i\left(\sum_{k=1}^{i-1} z_k +  z\right)$ in the integration with respect to $z_i$, reduces to
\begin{eqnarray*}
&&\int_{-\infty}^\infty \ldots \int_{-\infty}^\infty \prod_{j=1}^{i-1} \frac{\phi\left(\frac{z_j - \mu x_j}{\sigma \sqrt{x_j}}\right)}{\sigma \sqrt{x_j}} b_j\left(\sum_{k=1}^j z_k\right) \\
&&\phantom{abit}\mathbb{E}\left[\sum_{k=1}^{i-1} z_k + \frac{x_i}{x_i + x_{i+1}}\left(x - \sum_{k=1}^{i-1} z_k\right) + \sigma \sqrt{\frac{x_i x_{i+1}}{x_i + x_{i+1}}} \xi\right] \\
&&\phantom{abit} \frac{1}{\sigma \sqrt{x_i + x_{i+1}}} \phi\left(\frac{x - \sum_{k=1}^{i-1} z_k - \mu(x_i + x_{i+1})}{\sigma \sqrt{x_i + x_{i+1}}}\right) dz_{i-1} \ldots dz_1,
\end{eqnarray*}
which, using notation (\ref{eq:bTilde}), equals
\begin{eqnarray*}
&&\int_{-\infty}^\infty \ldots \int_{-\infty}^\infty \prod_{j=1}^{i-2} \frac{\phi\left(\frac{z_j - \mu x_j}{\sigma \sqrt{x_j}}\right)}{\sigma \sqrt{x_j}} b_j\left(\sum_{k=1}^j z_k\right) \frac{\phi\left(\frac{z_{i-1} - \mu x_{i-1}}{\sigma \sqrt{x_{i-1}}}\right)}{\sigma \sqrt{x_{i-1}}} \widetilde{b}_{i-1}\left(\sum_{k=1}^{i-1} z_k\right)\\
 &&\phantom{abit} \frac{1}{\sigma \sqrt{x_i + x_{i+1}}} \phi\left(\frac{x - \sum_{k=1}^{i-1} z_k - \mu(x_i + x_{i+1})}{\sigma \sqrt{x_i + x_{i+1}}}\right) dz_{i-1} \ldots dz_1,
\end{eqnarray*}
which, by definition (\ref{eq:NormalTransform}),
\begin{equation*}
= \frac{1}{\sigma \sqrt{\sum_{k=1}^{i + 1} x_k}}\phi\left(\frac{x - \mu\sum_{k=1}^{i + 1} x_k}{\sigma \sqrt{\sum_{k=1}^{i+1} x_k}}\right) \mathcal{N}_{(b_1,\ldots,b_{i-2},\widetilde{b}_{i-1}),\mu,\sigma}(x_1,\ldots,x_{i-1},x_i + x_{i+1},x).
\end{equation*}
This finishes the proof.
\end{proof}

\section{The joint density of $N$ and $K_N$}\label{sec:JointDensity}

We return to the setting of the first section. Let $f_{N,K_N}(m,x)$ be the joint density of $N$ and $K_N$. Furthermore, put 
$$\Delta m_1 = m_1$$
and
\begin{equation}
\mathcal{N}_1(x) = 1,\label{eq:N1}
\end{equation}
and, for $i \in \{2,\ldots,L\}$, 
$$\Delta_{m_i} = m_i - m_{i-1}$$ 
and
\begin{equation}
\mathcal{N}_i(x) = \mathcal{N}_{\left(1 - \psi_{m_1}, \ldots, 1 - \psi_{m_{i-1}}\right),\mu, \sigma}(\Delta_{m_1},\ldots,\Delta_{m_i},x).\label{eq:Ni}
\end{equation}

We first establish in Theorem \ref{lem:NBetween0And1} that each $\mathcal{N}_i$ takes values between $0$ and $1$. We need the following lemma.

\begin{lem}\label{lem:NormalTransformBetween0And1}
Let $(b_1,\ldots,b_i)$ be a tuple of bounded Borel measurable maps of $\mathbb{R}$ into $\mathbb{R}$. If each $b_i$ takes values between $0$ and $1$, then $\mathcal{N}_{(b_1,\ldots,b_{i}), \mu,\sigma}$ takes values between $0$ and $1$.
\end{lem}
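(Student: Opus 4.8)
The plan is to prove Lemma~\ref{lem:NormalTransformBetween0And1} by induction on the length $i$ of the tuple, using the recursive formula from Theorem~\ref{thm:RecursiveDescriptionNormalTransform}. The key structural observation is that the normal transform is, in every case, expressed as an expectation $\mathbb{E}[\,\cdot\,]$ of the final map $b_i$ evaluated at a random argument built from a standard normal $\xi$; since an expectation of a $[0,1]$-valued random variable again lies in $[0,1]$, the bounds propagate cleanly.

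First I would treat the base case $i = 1$. By formula~(\ref{eq:NormalTransform1}), $\mathcal{N}_{b_1,\mu,\sigma}(x_1,x_2,x) = \mathbb{E}\bigl[b_1\bigl(\tfrac{x_1}{x_1+x_2}x + \sigma\sqrt{\tfrac{x_1 x_2}{x_1+x_2}}\,\xi\bigr)\bigr]$. Because $b_1$ takes values in $[0,1]$, the random variable inside the expectation lies in $[0,1]$ pointwise, so its expectation lies in $[0,1]$ as well. This settles $i=1$.

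For the inductive step, suppose the claim holds for all tuples of length $i-1$, and let $(b_1,\ldots,b_i)$ be a tuple of $[0,1]$-valued maps. By~(\ref{eq:NormRec}), $\mathcal{N}_{(b_1,\ldots,b_i),\mu,\sigma}(x_1,\ldots,x_{i+1},x) = \mathcal{N}_{(b_1,\ldots,b_{i-2},\widetilde{b}_{i-1}),\mu,\sigma}(x_1,\ldots,x_{i-1},x_i+x_{i+1},x)$, where $\widetilde{b}_{i-1}$ is defined in~(\ref{eq:bTilde}). The main point, and the only real work, is to verify that $\widetilde{b}_{i-1}$ again takes values in $[0,1]$: it is the product of $b_{i-1}(z)\in[0,1]$ with the factor $\mathbb{E}\bigl[b_i\bigl(\tfrac{x_{i+1}}{x_i+x_{i+1}}z + \tfrac{x_i}{x_i+x_{i+1}}x + \sigma\sqrt{\tfrac{x_i x_{i+1}}{x_i+x_{i+1}}}\,\xi\bigr)\bigr]$, which lies in $[0,1]$ because $b_i$ does (again the argument ranges over $[0,1]$ pointwise). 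A product of two numbers in $[0,1]$ stays in $[0,1]$, and $\widetilde{b}_{i-1}$ inherits boundedness and Borel measurability from $b_{i-1}$ and $b_i$. Hence $(b_1,\ldots,b_{i-2},\widetilde{b}_{i-1})$ is a tuple of length $i-1$ consisting of $[0,1]$-valued maps, and the induction hypothesis applies to give the desired bounds.

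I do not anticipate a serious obstacle here; the result is essentially a bookkeeping consequence of the recursion, and the only substantive checks are that the new map $\widetilde{b}_{i-1}$ remains $[0,1]$-valued and Borel measurable. The mildest subtlety is confirming measurability of $z\mapsto \widetilde{b}_{i-1}(z)$, but this follows because the expectation of $b_i$ at an affine-in-$z$ Gaussian argument is a convolution of the bounded Borel map $b_i$ with a normal density, hence continuous in $z$, and is then multiplied by the Borel map $b_{i-1}$.
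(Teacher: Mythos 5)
Your proof is correct and follows exactly the route the paper intends: the paper's own proof simply states that the result ``follows easily by an inductive argument'' from formulas (\ref{eq:NormalTransform1}), (\ref{eq:NormRec}), and (\ref{eq:bTilde}), which is precisely the induction you carry out. Your added verification that $\widetilde{b}_{i-1}$ stays $[0,1]$-valued, bounded, and Borel measurable (via continuity of the Gaussian convolution) is a careful filling-in of details the paper leaves implicit.
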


\begin{proof}
Using formulas (\ref{eq:NormalTransform1}), (\ref{eq:NormRec}), and (\ref{eq:bTilde}), this follows easily by an inductive argument.
\end{proof}

\begin{thm}\label{lem:NBetween0And1}
For each $i \in \{1,\ldots,L\}$, $\mathcal{N}_i$ takes values between $0$ and $1$.
\end{thm}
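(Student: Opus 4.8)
The plan is to reduce the statement about each $\mathcal{N}_i$ directly to Lemma \ref{lem:NormalTransformBetween0And1}. First I would treat the case $i = 1$ separately: by definition (\ref{eq:N1}), $\mathcal{N}_1(x) = 1$, which trivially lies in $[0,1]$, so nothing needs to be proved here. For $i \in \{2,\ldots,L\}$, I would unwind the definition (\ref{eq:Ni}), which expresses $\mathcal{N}_i$ as the normal transform
\begin{equation*}
\mathcal{N}_i(x) = \mathcal{N}_{\left(1 - \psi_{m_1}, \ldots, 1 - \psi_{m_{i-1}}\right),\mu,\sigma}(\Delta_{m_1},\ldots,\Delta_{m_i},x)
\end{equation*}
associated with the tuple $B = (1 - \psi_{m_1}, \ldots, 1 - \psi_{m_{i-1}})$ of bounded Borel measurable maps.

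The key observation is that each component $b_j = 1 - \psi_{m_j}$ of this tuple takes values in $[0,1]$. Indeed, by the standing assumption in the Introduction, every $\psi_m$ is a Borel measurable map of $\mathbb{R}$ into $[0,1]$, so $0 \leq \psi_{m_j}(x) \leq 1$ for all $x$, and hence $0 \leq 1 - \psi_{m_j}(x) \leq 1$ as well. In particular each $b_j$ is a bounded Borel measurable map into $[0,1]$, so the hypotheses of Lemma \ref{lem:NormalTransformBetween0And1} are met. Note also that the arguments $\Delta_{m_1},\ldots,\Delta_{m_i}$ are strictly positive, since $0 < m_1 < \cdots < m_L < n$ forces $\Delta_{m_1} = m_1 > 0$ and $\Delta_{m_j} = m_j - m_{j-1} > 0$ for $j \geq 2$, so the normal transform is evaluated on its proper domain $]0,\infty[^{i} \times \mathbb{R}$.

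Applying Lemma \ref{lem:NormalTransformBetween0And1} to the tuple $(1 - \psi_{m_1}, \ldots, 1 - \psi_{m_{i-1}})$ then immediately yields that $\mathcal{N}_{(1 - \psi_{m_1}, \ldots, 1 - \psi_{m_{i-1}}),\mu,\sigma}$ takes values between $0$ and $1$, and evaluating at $(\Delta_{m_1},\ldots,\Delta_{m_i},x)$ gives $0 \leq \mathcal{N}_i(x) \leq 1$. Since $i \in \{2,\ldots,L\}$ was arbitrary and the case $i=1$ is trivial, the claim holds for every $i \in \{1,\ldots,L\}$.

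I do not anticipate any genuine obstacle here: the theorem is essentially a bookkeeping corollary of Lemma \ref{lem:NormalTransformBetween0And1}, and the only thing that must be verified carefully is that the maps $1 - \psi_{m_j}$ inherit the $[0,1]$-valuedness from the $\psi_{m_j}$. The real work — the inductive argument on the recursive structure (\ref{eq:NormalTransform1}), (\ref{eq:NormRec}), (\ref{eq:bTilde}) that shows a convex-combination-plus-noise argument preserves the range $[0,1]$ under the expectation — has already been absorbed into the proof of that lemma.
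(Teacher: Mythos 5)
Your proposal is correct and follows exactly the paper's own argument: the case $i=1$ is immediate from (\ref{eq:N1}), and for $i \geq 2$ one applies Lemma \ref{lem:NormalTransformBetween0And1} to the tuple $(1-\psi_{m_1},\ldots,1-\psi_{m_{i-1}})$ in definition (\ref{eq:Ni}), using that each $\psi_{m_j}$ maps into $[0,1]$. Your write-up merely spells out the details (including the positivity of the $\Delta_{m_j}$) that the paper's one-line proof leaves implicit.
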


\begin{proof}
Using (\ref{eq:N1}), (\ref{eq:Ni}), and the fact that each $\psi_i$ takes values between $0$ and $1$, this follows from Lemma \ref{lem:NormalTransformBetween0And1}.
\end{proof}

The importance of the normal transform is reflected by the following result, which provides a formula for the joint density of $N$ and $K_N$ at the places where the interim analyses are performed. 

\begin{thm}\label{thm:JointDensity}
For $i \in \{1,\ldots,L\}$,
\begin{equation}
f_{N,K_N}(m_i,x) = \frac{1}{\sigma \sqrt{m_i}}\phi\left(\frac{x - \mu m_i}{\sigma \sqrt{m_i}}\right) \psi_{m_i} (x)\mathcal{N}_i(x).\label{eq:JointDensity}
\end{equation}
\end{thm}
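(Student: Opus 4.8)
The plan is to read the statement as an assertion about the sub-probability measure $A \mapsto \mathbb{P}[N = m_i, K_N \in A]$ on $\mathbb{R}$: since $K_N = K_{m_i}$ on the event $\{N = m_i\}$, I must show that this measure is absolutely continuous with the claimed Lebesgue density. First I would condition on $X_1,\ldots,X_{m_i}$. Because $1_A(K_{m_i})$ is $\sigma(X_1,\ldots,X_{m_i})$-measurable and, by (c), $\mathbb{E}[1_{\{N=m_i\}}\mid X_1,\ldots,X_{m_i}] = \psi_{m_i}(K_{m_i})\prod_{j=1}^{i-1}(1-\psi_{m_j}(K_{m_j}))$, the tower property gives, for every Borel set $A$,
$$\mathbb{P}[N = m_i, K_N \in A] = \mathbb{E}\left[\psi_{m_i}(K_{m_i})\prod_{j=1}^{i-1}\left(1 - \psi_{m_j}(K_{m_j})\right) 1_A(K_{m_i})\right].$$
Note that only (c) is used here; property (b) plays no role in this particular computation.

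Next I would pass to the increments $Z_1 = K_{m_1}$ and $Z_j = K_{m_j} - K_{m_{j-1}}$ for $j \geq 2$, which are independent with $Z_j \sim N(\mu\Delta_{m_j}, \sigma^2\Delta_{m_j})$ and satisfy $K_{m_j} = \sum_{k=1}^j Z_k$. Writing the expectation as an $i$-fold integral against the product of the increment densities turns the right-hand side into
$$\int_{\mathbb{R}^i} \prod_{j=1}^i \frac{\phi\left(\frac{z_j - \mu\Delta_{m_j}}{\sigma\sqrt{\Delta_{m_j}}}\right)}{\sigma\sqrt{\Delta_{m_j}}}\, \psi_{m_i}\!\left(\textstyle\sum_{k=1}^i z_k\right)\prod_{j=1}^{i-1}\left(1-\psi_{m_j}\!\left(\textstyle\sum_{k=1}^j z_k\right)\right) 1_A\!\left(\textstyle\sum_{k=1}^i z_k\right) dz_i\cdots dz_1.$$

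The decisive step is then to change variables from $z_i$ to $x = \sum_{k=1}^i z_k$ with $z_1,\ldots,z_{i-1}$ fixed, so that $dz_i = dx$, $z_i$ is replaced by $x - \sum_{k=1}^{i-1}z_k$, and the factors $\psi_{m_i}(x)\,1_A(x)$ come out of the inner integration. After interchanging the order of integration by Fubini (legitimate since every $\psi$ is bounded by $1$ and the Gaussian densities are integrable), the inner integral over $z_1,\ldots,z_{i-1}$ is precisely the numerator in the definition (\ref{eq:NormalTransform}) of $\mathcal{N}_{(1-\psi_{m_1},\ldots,1-\psi_{m_{i-1}}),\mu,\sigma}(\Delta_{m_1},\ldots,\Delta_{m_i},x)$, whose denominator is $\frac{1}{\sigma\sqrt{m_i}}\phi\left(\frac{x-\mu m_i}{\sigma\sqrt{m_i}}\right)$ because $\sum_{k=1}^i\Delta_{m_k} = m_i$. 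Hence the inner integral equals $\frac{1}{\sigma\sqrt{m_i}}\phi\left(\frac{x-\mu m_i}{\sigma\sqrt{m_i}}\right)\mathcal{N}_i(x)$ by (\ref{eq:Ni}), and I obtain
$$\mathbb{P}[N=m_i,K_N\in A] = \int_A \frac{1}{\sigma\sqrt{m_i}}\phi\left(\frac{x-\mu m_i}{\sigma\sqrt{m_i}}\right)\psi_{m_i}(x)\mathcal{N}_i(x)\,dx.$$
Since $A$ is an arbitrary Borel set, the integrand is the sought density.

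The case $i = 1$ is a degenerate base case that I would dispatch first: the product over $j$ is empty, $\mathcal{N}_1 \equiv 1$ by (\ref{eq:N1}), and the claim reduces to the elementary fact that $K_{m_1}\sim N(\mu m_1,\sigma^2 m_1)$. I expect the only genuine friction to be bookkeeping rather than mathematics: correctly matching the $(i-1)$-tuple $(1-\psi_{m_1},\ldots,1-\psi_{m_{i-1}})$ and the $i$ arguments $\Delta_{m_1},\ldots,\Delta_{m_i}$ to the roles of $b_1,\ldots,b_i$ and $x_1,\ldots,x_{i+1}$ in the template (\ref{eq:NormalTransform}), and ensuring the change of variable leaves the final Gaussian factor in exactly the form appearing under the integral sign there. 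Conceptually, the only subtle point is the correct reading of $f_{N,K_N}(m_i,\cdot)$ as the Lebesgue density of a sub-probability measure; once that is settled, the computation is direct.
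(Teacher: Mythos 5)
Your proof is correct and follows essentially the same route as the paper's: use condition (c) to factor in the stopping probability, pass to the independent increments $K_{m_1}, K_{m_2}-K_{m_1}, \ldots$, and recognize the resulting $(i-1)$-fold Gaussian integral as the numerator in the definition (\ref{eq:NormalTransform}) of the normal transform, with the denominator supplied by $\sum_{k=1}^{i}\Delta_{m_k}=m_i$. The only difference is presentational: the paper manipulates the mixed discrete--continuous joint and conditional densities $f_{N,S_{m_1},\ldots,S_{m_i}}$ directly, whereas you derive the density by computing $\mathbb{P}[N=m_i,\,K_N\in A]$ for an arbitrary Borel set $A$ via the tower property, a change of variables, and Fubini --- a more careful measure-theoretic rendering of the same computation (and your observation that (b) is not needed here is also consistent with the paper, which invokes (b) only later, for the density at $n$).
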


\begin{proof}
We first consider the case $i  = 1$. We have
\begin{equation}
f_{N,K_{N}}(m_1,x) = f_{N,K_{m_1}}(m_1,x) = f_{N \mid K_{m_1}}(m_1\mid x) f_{K_{m_1}}(x),\label{eq:JointDensity11}
\end{equation}
with $f_{N \mid K_{m_1}}(m \mid x)$ the conditional density of $N$ given $K_{m_1}$ and $f_{K_{m_1}}$ the density of $K_{m_1}$. By condition (c) in section 1, and using the discrete nature of $N$, 
\begin{equation}
f_{N \mid K_{m_1}}(m_1\mid x) = \psi_{m_1}(x).\label{eq:JointDensity12}
\end{equation}
Furthermore, since the $X_k$ are independent with distribution $N(\mu,\sigma^2)$,
\begin{equation}
f_{K_{m_1}}(x) = \frac{1}{\sigma \sqrt{m_1}} \phi\left(\frac{x - \mu m_1}{\sigma \sqrt{m_1}}\right).\label{eq:JointDensity13}
\end{equation}
Combining (\ref{eq:JointDensity11}), (\ref{eq:JointDensity12}), and (\ref{eq:JointDensity13}), shows that (\ref{eq:JointDensity}) holds in the case $i = 1$.

We now turn to the case $i \geq 2$. Put 
$$S_{m_1} = K_{m_1}$$
and, for $j \in \{2,\ldots,L\}$, 
$$S_{m_j} = K_{m_{j}} - K_{m_{j-1}}.$$ 
Let 
$$f_{N,S_{m_1},\ldots,S_{m_i}}(m,x_1,\ldots,x_i)$$ 
be the joint density of $N$ and $S_{m_1},\ldots,S_{m_i}$, 
$$f_{N \mid S_{m_1}, \ldots, S_{m_i}}(m \mid x_1, \ldots, x_i)$$ 
the conditional density of $N$ given $S_{m_1}, \ldots, S_{m_i}$, 
and 
$$f_{S_{m_1},\ldots,S_{m_i}}(x_1,\ldots,x_i)$$ 
the joint density of $S_{m_1},\ldots,S_{m_i}$. Then
\begin{eqnarray}
\lefteqn{f_{N,K_N}(m_i,x)}\label{eq:JointDensity21}\\
&=& f_{N,K_{m_i}}(m_i,x)\nonumber\\
&=& \int_{-\infty}^\infty \cdots \int_{-\infty}^\infty f_{N,S_{m_1},\ldots,S_{m_i}}\left(m_i,z_1,\ldots,z_{i-1},x-\sum_{k=1}^{i-1} z_k\right) dz_{i-1} \ldots dz_1\nonumber\\
&=& \int_{-\infty}^\infty \cdots \int_{-\infty}^\infty f_{N \mid S_{m_1},\ldots,S_{m_i}}\left(m_i \mid z_1,\ldots,z_{i-1},x-\sum_{k=1}^{i-1} z_k\right)\nonumber\\
&&\phantom{abitfurther}f_{S_{m_1},\ldots,S_{m_i}}\left(z_1,\ldots,z_{i-1},x - \sum_{k=1}^{i-1} z_k\right) dz_{i-1} \ldots dz_1.\nonumber 
\end{eqnarray}
By condition (c) in section 1, and using the discrete nature of $N$, 
\begin{equation}
f_{N \mid S_{m_1},\ldots,S_{m_i}}\left(m_i \mid z_1,\ldots,z_{i-1},x-\sum_{k=1}^{i-1} z_k\right) = \psi_{m_i}(x)\prod_{j=1}^{i - 1} \left[1 - \psi_{m_j}\left(\sum_{k=1}^j z_k\right)\right].\label{eq:JointDensity22}
\end{equation}
Furthermore, the $X_k$ being independent with distribution $N(\mu,\sigma^2)$,
\begin{eqnarray}
\lefteqn{f_{S_{m_1},\ldots,S_{m_i}}\left(z_1,\ldots,z_{i-1},x - \sum_{k=1}^{i-1} z_k\right)}\label{eq:JointDensity23}\\
&=& \prod_{j=1}^{i-1} f_{S_{m_j}}(z_j) f_{S_{m_i}}\left(x - \sum_{k=1}^{i-1} z_k\right)\nonumber\\
&=& \prod_{j=1}^{i - 1} \frac{1}{\sigma \sqrt{\Delta m _j}} \phi\left(\frac{z_j - \mu \Delta m_j}{\sigma \sqrt{\Delta m_j}}\right) \frac{1}{\sigma \sqrt{\Delta m_i}} \phi\left(\frac{x - \sum_{k=1}^{i-1} z_k - \mu \Delta m_i}{\sigma \sqrt{\Delta m_i}}\right).\nonumber
\end{eqnarray}
Combining  definition (\ref{eq:NormalTransform}) with (\ref{eq:JointDensity21}), (\ref{eq:JointDensity22}), and (\ref{eq:JointDensity23}), establishes (\ref{eq:JointDensity}). This finishes the proof.
\end{proof}

Finally, we will provide a formula for the joint density of $N$ and $K_N$ at $n$ in Theorem \ref{thm:JointDensity2}. We need the following lemma.

\begin{lem}
Let $\xi$ be a random variable with law $N(0,1)$. Then, for $i \in \{1,\ldots,L\}$,
\begin{equation}
f_{N,K_n}(m_i,x) = \frac{1}{\sigma \sqrt{n}} \phi\left(\frac{x - \mu n}{\sigma \sqrt{n}}\right) \mathbb{E}\left[\left(\psi_{m_i} \mathcal{N}_i\right)\left(\frac{m_i}{n} x + \sigma \sqrt{\frac{m_i (n - m_i)}{n}} \xi\right)\right].\label{eq:auxJointDensity2}
\end{equation}
\end{lem}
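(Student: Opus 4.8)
The plan is to reduce the statement to a single application of the convolution identity in Lemma~\ref{lem:IntTwoGaussians}, fed by the interim joint density already computed in Theorem~\ref{thm:JointDensity}. The starting point is the decomposition
\[
K_n = K_{m_i} + \left(K_n - K_{m_i}\right),
\]
in which $K_{m_i} = \sum_{k=1}^{m_i} X_k$ is a function of $X_1,\ldots,X_{m_i}$, while the increment $K_n - K_{m_i} = \sum_{k=m_i+1}^n X_k$ is a function of $X_{m_i+1},\ldots,X_n$ and, since the $X_k$ are independent with law $N(\mu,\sigma^2)$, has law $N\left(\mu(n-m_i),\sigma^2(n-m_i)\right)$.

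First I would argue that the pair $\left(1_{\{N=m_i\}}, K_{m_i}\right)$ is independent of the increment $K_n - K_{m_i}$. This is the decisive step: by condition (b) the event $\{N = m_i\}$ does not depend on the outcomes $X_{m_i+1}, X_{m_i+2},\ldots$, and $K_{m_i}$ depends only on $X_1,\ldots,X_{m_i}$, which are independent of $X_{m_i+1},\ldots,X_n$; hence the stopping decision at $m_i$ together with the partial sum $K_{m_i}$ decouples from the future increment $K_n - K_{m_i}$. Consequently the sub-probability measure $\mathbb{P}[N = m_i,\, K_n \in \cdot\,]$ is the convolution of $\mathbb{P}[N = m_i,\, K_{m_i} \in \cdot\,]$ with the law of $K_n - K_{m_i}$, which at the level of densities reads
\[
f_{N,K_n}(m_i,x) = \int_{-\infty}^\infty f_{N,K_{m_i}}(m_i,y)\, f_{K_n - K_{m_i}}(x - y)\, dy.
\]

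Next I would substitute the two densities. Since $K_N = K_{m_i}$ on $\{N = m_i\}$, Theorem~\ref{thm:JointDensity} supplies
\[
f_{N,K_{m_i}}(m_i,y) = \frac{1}{\sigma\sqrt{m_i}}\phi\left(\frac{y - \mu m_i}{\sigma\sqrt{m_i}}\right)\left(\psi_{m_i}\mathcal{N}_i\right)(y),
\]
while $f_{K_n - K_{m_i}}(x-y) = \frac{1}{\sigma\sqrt{n-m_i}}\phi\left(\frac{x - y - \mu(n-m_i)}{\sigma\sqrt{n-m_i}}\right)$. The resulting integral has exactly the shape of the left-hand side of (\ref{eq:Integral2}) with $x_1 = m_i$, $x_2 = n - m_i$, and $b = \psi_{m_i}\mathcal{N}_i$.

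Finally I would invoke Lemma~\ref{lem:IntTwoGaussians}. Since $x_1 + x_2 = n$, $\frac{x_1}{x_1+x_2} = \frac{m_i}{n}$, and $\frac{x_1 x_2}{x_1+x_2} = \frac{m_i(n-m_i)}{n}$, the lemma turns the convolution into
\[
\frac{1}{\sigma\sqrt{n}}\phi\left(\frac{x-\mu n}{\sigma\sqrt{n}}\right)\mathbb{E}\left[\left(\psi_{m_i}\mathcal{N}_i\right)\left(\frac{m_i}{n}x + \sigma\sqrt{\frac{m_i(n-m_i)}{n}}\,\xi\right)\right],
\]
which is precisely (\ref{eq:auxJointDensity2}). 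The only genuine subtlety, and the step I would write out most carefully, is the joint independence in the second paragraph, namely the decoupling of the stopping-and-partial-sum pair from the post-$m_i$ increment via condition (b); everything afterwards is a direct substitution into Theorem~\ref{thm:JointDensity} followed by Lemma~\ref{lem:IntTwoGaussians}.
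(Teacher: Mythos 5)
Your proposal is correct and follows essentially the same route as the paper's own proof: both write $f_{N,K_n}(m_i,\cdot)$ as the convolution of $f_{N,K_{m_i}}(m_i,\cdot)$ with the density of the increment $K_n - K_{m_i}$ (justified by condition (b)), substitute the interim density from Theorem~\ref{thm:JointDensity}, and close with Lemma~\ref{lem:IntTwoGaussians} using $x_1 = m_i$, $x_2 = n - m_i$, $b = \psi_{m_i}\mathcal{N}_i$. Your explicit remark that the decoupling must hold for the \emph{pair} $\left(1_{\{N = m_i\}}, K_{m_i}\right)$ jointly, not merely for the event $\{N = m_i\}$ alone, is a slightly more careful reading of condition (b) than the paper spells out, but it is the same argument.
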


\begin{proof}
We have 
$$f_{N,K_n}(m_i,x) = \int_{-\infty}^\infty f_{N,K_{m_i}, K_n - K_{m_i}}(m_i,z, x-z) dz,$$
which, by condition (b) in section 1,
$$= \int_{-\infty}^\infty f_{N,K_{m_i}} (m_i,z) f_{K_n - K_{m_i}} (x - z) dz,$$
which, by (\ref{eq:JointDensity}),
$$= \int_{-\infty}^\infty \frac{1}{\sigma \sqrt{m_i}}\phi\left(\frac{z - \mu m_i}{\sigma \sqrt{m_i}}\right) \psi_{m_i} (z)\mathcal{N}_i(z) \frac{1}{\sigma \sqrt{n-m_i}} \phi\left(\frac{x - z - \mu \sqrt{n - m_i}}{\sigma\sqrt{n - m_i}}\right),$$
which, by (\ref{eq:Integral2}),
$$= \frac{1}{\sigma \sqrt{n}} \phi\left(\frac{x - \mu n}{\sigma \sqrt{n}}\right) \mathbb{E}\left[\left(\psi_{m_i} \mathcal{N}_i\right)\left(\frac{m_i}{n} x + \sigma \sqrt{\frac{m_i (n - m_i)}{n}}  \xi\right)\right].$$
This finishes the proof.
\end{proof}

\begin{thm}\label{thm:JointDensity2}
Let $\xi$ be a random variable with law $N(0,1)$. Then
\begin{equation}
f_{N,K_N}(n,x) = \frac{1}{\sigma\sqrt{n}} \phi\left(\frac{x - \mu n}{\sigma \sqrt{n}}\right)\left[1 - \sum_{i=1}^L \mathbb{E}\left[\left(\psi_{m_i} \mathcal{N}_i\right)\left(\frac{m_i}{n} x + \sigma\sqrt{\frac{m_i (n - m_i)}{n}} \xi\right)\right]\right].\label{eq:JointDensity2}
\end{equation}
\end{thm}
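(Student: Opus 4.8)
The plan is to reduce the $N=n$ case to the already-established interim formulas by exploiting that the values $m_1,\ldots,m_L,n$ exhaust the range of $N$ by property (a). First I would observe that on the event $\{N=n\}$ one has $K_N = K_n$, so that
\[
f_{N,K_N}(n,x) = f_{N,K_n}(n,x).
\]
This replaces the awkward random-index sum $K_N$ by the fixed full sum $K_n$, which has the tractable marginal law $N(\mu n, \sigma^2 n)$.

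Next I would use that $\{N=m_1\},\ldots,\{N=m_L\},\{N=n\}$ form a (finite) partition of the sample space, so that summing the joint sub-densities $f_{N,K_n}(m,x)$ over all possible values $m$ of $N$ recovers the marginal density of $K_n$. Concretely,
\[
f_{K_n}(x) = \sum_{i=1}^L f_{N,K_n}(m_i,x) + f_{N,K_n}(n,x),
\]
where $f_{K_n}(x) = \frac{1}{\sigma\sqrt{n}}\,\phi\!\left(\frac{x-\mu n}{\sigma\sqrt{n}}\right)$ since the $X_k$ are independent $N(\mu,\sigma^2)$. Solving for the $N=n$ term gives
\[
f_{N,K_n}(n,x) = \frac{1}{\sigma\sqrt{n}}\,\phi\!\left(\frac{x-\mu n}{\sigma\sqrt{n}}\right) - \sum_{i=1}^L f_{N,K_n}(m_i,x).
\]

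Finally I would substitute the preceding lemma, formula (\ref{eq:auxJointDensity2}), for each term $f_{N,K_n}(m_i,x)$ in the sum. Since every such term carries the same common factor $\frac{1}{\sigma\sqrt{n}}\,\phi\!\left(\frac{x-\mu n}{\sigma\sqrt{n}}\right)$, this factor can be pulled out, leaving precisely the bracketed expression $1 - \sum_{i=1}^L \mathbb{E}\!\left[(\psi_{m_i}\mathcal{N}_i)\!\left(\frac{m_i}{n}x + \sigma\sqrt{\frac{m_i(n-m_i)}{n}}\,\xi\right)\right]$, which is the claimed identity (\ref{eq:JointDensity2}). The only genuinely substantive point is the partition step: one must be sure that the list $m_1,\ldots,m_L,n$ is exhaustive so that the sub-densities really add up to the marginal of $K_n$; but this is guaranteed by property (a), making the remainder a routine substitution and factorization. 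I expect no serious obstacle here, as all the analytic work has already been absorbed into the auxiliary lemma.
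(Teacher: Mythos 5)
Your proposal is correct and follows exactly the paper's own argument: use property (a) to write $f_{N,K_N}(n,x) = f_{N,K_n}(n,x) = f_{K_n}(x) - \sum_{i=1}^L f_{N,K_n}(m_i,x)$, then substitute the auxiliary formula (\ref{eq:auxJointDensity2}) for each interim term and factor out the common Gaussian density. The only difference is presentational: you spell out the partition and factorization steps that the paper compresses into one line.
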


\begin{proof}
By condition (a) in section 1,
$$f_{N,K_N}(n,x) = f_{N,K_n}(n,x) = f_{K_n}(x) - \sum_{i=1}^L f_{N,K_n}(m_i,x),$$
which, applying (\ref{eq:auxJointDensity2}), proves the desired result.
\end{proof}

\section{A fundamental result}\label{sec:FunRes}

We will prove Theorem \ref{thm:FunRes}, which will play a fundamental role in the calculation of the expected length of the trial, the bias, and the MSE, and in the establishment of an asymptotic normality result. We need the following lemma. 

\begin{lem}
Let $\eta$ be a random variable with law $N(0,1)$, $g$ a Borel measurable map of $\mathbb{R}$ into $\mathbb{R}$ with
$\mathbb{E}[\left|g(\eta)\right|] < \infty$, and $m \in \mathbb{R}^+_0$. Then
\begin{equation}
\int_{-\infty}^\infty \frac{1}{\sigma \sqrt{m}} \phi\left(\frac{x - \mu m}{\sigma \sqrt{m}}\right) g(x) dx = \mathbb{E}\left[g (\mu m + \sigma \sqrt{m} \eta)\right].\label{eq:NActsOnG0}
\end{equation}
\end{lem}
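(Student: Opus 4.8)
The plan is to recognize this as a simple change-of-variables identity. The left-hand side integrates $g$ against the density of a $N(\mu m, \sigma^2 m)$ random variable, so the claim is nothing more than the statement that the expectation of $g$ under that law coincides with the expectation of $g(\mu m + \sigma\sqrt{m}\,\eta)$ when $\eta \sim N(0,1)$. In other words, if $Y \sim N(\mu m, \sigma^2 m)$, then $Y$ has the same distribution as $\mu m + \sigma\sqrt{m}\,\eta$, and the identity is just the law-of-the-unconscious-statistician applied to both sides.

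First I would substitute $u = \frac{x - \mu m}{\sigma\sqrt{m}}$ in the integral on the left. Since $m \in \mathbb{R}^+_0$, we have $\sigma\sqrt{m} > 0$, so this is a legitimate affine change of variables with $x = \mu m + \sigma\sqrt{m}\,u$ and $dx = \sigma\sqrt{m}\,du$. The factor $\frac{1}{\sigma\sqrt{m}}$ in the integrand cancels against the Jacobian $\sigma\sqrt{m}$ from $dx$, and the integral becomes
\begin{equation*}
\int_{-\infty}^\infty \phi(u)\, g\!\left(\mu m + \sigma\sqrt{m}\,u\right) du,
\end{equation*}
which is precisely $\mathbb{E}\left[g(\mu m + \sigma\sqrt{m}\,\eta)\right]$ because $\phi$ is the density of $\eta \sim N(0,1)$.

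The only point requiring a word of justification is that the manipulation is valid, i.e. that the integrals converge. This is guaranteed by the hypothesis $\mathbb{E}\left[\left|g(\eta)\right|\right] < \infty$; strictly one wants $\mathbb{E}\left[\left|g(\mu m + \sigma\sqrt{m}\,\eta)\right|\right] < \infty$, but since the affine transform $\eta \mapsto \mu m + \sigma\sqrt{m}\,\eta$ induces the equivalent Gaussian measure $N(\mu m, \sigma^2 m)$, which is mutually absolutely continuous with $N(0,1)$ with a bounded-on-compacts density ratio, finiteness is a routine matter and does not obstruct the substitution. I do not anticipate any real obstacle here: the result is an elementary consequence of the change-of-variables formula, and the lemma merely records it for convenient reference in the later sections where $g$ will be taken to be the sample mean functional or products of $\psi_{m_i}\mathcal{N}_i$ factors.
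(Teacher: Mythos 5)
Your proposal is correct and is essentially identical to the paper's own proof, which consists of exactly the same affine change of variables $z = \frac{x - \mu m}{\sigma\sqrt{m}}$ (the paper states it in one line, leaving the Jacobian cancellation and the integrability remark implicit). Nothing further is needed; the identity holds as an equality of integrals under the substitution, with the hypothesis $\mathbb{E}[\left|g(\eta)\right|] < \infty$ merely ensuring the quantities involved are finite in the later applications.
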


\begin{proof}
Perform the change of variables $z = \frac{x - \mu m}{\sigma \sqrt{m}}$.
\end{proof}

\begin{thm}\label{thm:FunRes}
Let $\xi$ and $\eta$ be independent random variables with law $N(0,1)$ and $h$ a Borel measurable map of $\mathbb{R}$ into $\mathbb{R}$ with $\mathbb{E}[\left|h(\eta)\right|] < \infty$. Then, for each $i \in \left\{1,\ldots,L\right\}$,
\begin{equation}
\mathbb{E}\left[h(\widehat{\mu}_N) 1_{\{N = m_i\}}\right] = \mathbb{E}\left[h\left(\mu + \frac{\sigma}{\sqrt{m_i}} \xi\right) \left(\psi_{m_i} \mathcal{N}_i\right)\left(\mu m_i + \sigma \sqrt{m_i} \xi\right)\right],\label{eq:Fun1}
\end{equation}
and 
\begin{eqnarray}
\lefteqn{\mathbb{E}\left[h(\widehat{\mu}_N) 1_{\{N = n\}}\right]}\label{eq:Fun2}\\
&=& \mathbb{E}\left[h\left(\mu + \frac{\sigma}{\sqrt{n}} \eta\right) \left(1 - \sum_{i=1}^L \mathbb{E}\left[\left(\psi_{m_i} \mathcal{N}_i\right)\left(\mu m_i + \sigma \sqrt{\frac{m_i(n - m_i)}{n}} \xi + \sigma \frac{m_i}{\sqrt{n}} \eta\right)\right]\right) \right].\nonumber
\end{eqnarray}
\end{thm}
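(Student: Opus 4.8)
The plan is to reduce both identities to a single mechanism: express each left-hand side as an ordinary integral of $h(\widehat{\mu}_N)$ against the joint density of $N$ and $K_N$, insert the explicit densities from Theorems \ref{thm:JointDensity} and \ref{thm:JointDensity2}, and then recognise the resulting Gaussian integral as an expectation by means of the last lemma, formula (\ref{eq:NActsOnG0}). The only genuine inputs are therefore the two density formulas and (\ref{eq:NActsOnG0}); everything else is bookkeeping with the affine map $x \mapsto x/m$ inside $h$.

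For (\ref{eq:Fun1}), I would start from the observation that on the event $\{N = m_i\}$ one has $\widehat{\mu}_N = K_N/m_i$, so that
\begin{equation*}
\mathbb{E}\left[h(\widehat{\mu}_N) 1_{\{N = m_i\}}\right] = \int_{-\infty}^\infty h\!\left(\frac{x}{m_i}\right) f_{N,K_N}(m_i,x)\, dx.
\end{equation*}
Substituting (\ref{eq:JointDensity}) and setting $g(x) = h(x/m_i)\,(\psi_{m_i}\mathcal{N}_i)(x)$, the integral takes exactly the form of the left-hand side of (\ref{eq:NActsOnG0}) with $m = m_i$. Applying that lemma yields $\mathbb{E}[g(\mu m_i + \sigma\sqrt{m_i}\,\eta)]$, and the elementary simplification $(\mu m_i + \sigma\sqrt{m_i}\,\eta)/m_i = \mu + \sigma\eta/\sqrt{m_i}$ turns this into the right-hand side of (\ref{eq:Fun1}) after renaming $\eta$ as $\xi$.

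The argument for (\ref{eq:Fun2}) is structurally identical but one layer deeper. On $\{N = n\}$ one has $\widehat{\mu}_N = K_N/n$, so the left-hand side equals $\int h(x/n) f_{N,K_N}(n,x)\,dx$; inserting (\ref{eq:JointDensity2}) and applying (\ref{eq:NActsOnG0}) with $m = n$ amounts to substituting $x = \mu n + \sigma\sqrt{n}\,\eta$ throughout. The key computation is that this substitution sends the argument $\tfrac{m_i}{n}x$ of $\psi_{m_i}\mathcal{N}_i$ to $\mu m_i + \sigma\tfrac{m_i}{\sqrt{n}}\eta$, which when added to the surviving term $\sigma\sqrt{m_i(n-m_i)/n}\,\xi$ reproduces precisely the argument appearing in (\ref{eq:Fun2}); the outer factor $h$ again collapses via $(\mu n + \sigma\sqrt{n}\,\eta)/n = \mu + \sigma\eta/\sqrt{n}$.

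The one point requiring care, and the step I expect to be the main obstacle, is the legitimacy of applying (\ref{eq:NActsOnG0}) in the derivation of (\ref{eq:Fun2}): there the integrand $g(x) = h(x/n)\bigl[1 - \sum_{i=1}^L \mathbb{E}_\xi[(\psi_{m_i}\mathcal{N}_i)(\cdots)]\bigr]$ itself contains an expectation over $\xi$, so one must first check that $g$ is a genuine Borel map with $\mathbb{E}[|g(\eta)|] < \infty$ before the lemma can be invoked and the nested-expectation form of the right-hand side read off. This is where Theorem \ref{lem:NBetween0And1} enters: because $\psi_{m_i}\mathcal{N}_i$ takes values in $[0,1]$, the inner $\xi$-expectation is a bounded function of $x$, the bracketed factor is bounded (by $L$ in absolute value), and hence $|g(x)| \le L\,|h(x/n)|$, so that the hypothesis $\mathbb{E}[|h(\eta)|] < \infty$ carried through the normal change of variables delivers the required integrability; Fubini's theorem then legitimises treating the $\xi$- and $\eta$-expectations as expectations over the independent pair $(\xi,\eta)$.
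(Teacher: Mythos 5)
Your proposal is correct and follows essentially the same route as the paper's own proof: write each left-hand side as an integral of $h(x/m)$ against the joint density from Theorems \ref{thm:JointDensity} and \ref{thm:JointDensity2}, then apply the change-of-variables identity (\ref{eq:NActsOnG0}) with $m = m_i$ and $m = n$ respectively, invoking independence of $\xi$ and $\eta$ for (\ref{eq:Fun2}). Your closing paragraph on integrability of the bracketed factor (via Theorem \ref{lem:NBetween0And1}) and the Fubini step merely makes explicit what the paper compresses into the phrase ``using independence of $\xi$ and $\eta$,'' so it is a welcome refinement rather than a deviation.
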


\begin{proof}
By (\ref{eq:JointDensity}), for $i \in \{1,\ldots,L\}$,
$$\mathbb{E}\left[h(\widehat{\mu}_N) 1_{\{N = m_i\}}\right] = \int_{-\infty}^\infty h\left(\frac{1}{m_i} x\right) \frac{1}{\sigma \sqrt{m_i}}\phi\left(\frac{x - \mu m_i}{\sigma \sqrt{m_i}}\right) \psi_{m_i} (x)\mathcal{N}_i(x) dx,$$
which, using (\ref{eq:NActsOnG0}) with $g(x) = h\left(\frac{1}{m_i} x\right) \psi_{m_i} (x)\mathcal{N}_i(x)$ and  $m = m_i$, gives (\ref{eq:Fun1}).

Furthermore, by (\ref{eq:JointDensity2}),
\begin{eqnarray*}
\lefteqn{\mathbb{E}\left[h(\widehat{\mu}_N) 1_{\{N = n\}}\right]}\\
&=& \int_{-\infty}^\infty h\left(\frac{1}{n} x\right) \frac{1}{\sigma\sqrt{n}} \phi\left(\frac{x - \mu n}{\sigma \sqrt{n}}\right)\left[1 - \sum_{i=1}^L \mathbb{E}\left[\left(\psi_{m_i} \mathcal{N}_i\right)\left(\frac{m_i}{n} x + \sigma\sqrt{\frac{m_i (n - m_i)}{n}} \xi\right)\right]\right],
\end{eqnarray*}
which, applying (\ref{eq:NActsOnG0}) with $g(x) = h\left(\frac{1}{n} x\right)\left[1 - \sum_{i=1}^L \mathbb{E}\left[\left(\psi_{m_i} \mathcal{N}_i\right)\left(\frac{m_i}{n} x + \sigma\sqrt{\frac{m_i (n - m_i)}{n}} \xi\right)\right]\right]$ and $m = n$, and using independence of $\xi$ and $\eta$, gives (\ref{eq:Fun2}).
\end{proof}

\section{The expected length of the trial}\label{sec:EL}

The following result provides explicit formulas for the marginal density of the actual length of the trial $N$.

\begin{thm}\label{thm:MarginalProbabilities}
Let $\xi$ and $\eta$ be independent random variables with law $N(0,1)$. Then, for each $i \in \left\{1,\ldots,L\right\}$,
\begin{equation}
\mathbb{P}[N = m_i] = \mathbb{E}\left[\left(\psi_{m_i} \mathcal{N}_i\right)\left(\mu m_i + \sigma \sqrt{m_i} \xi\right)\right],\label{eq:PNIsmi}
\end{equation}
and
\begin{equation}
\mathbb{P}[N = n] = 1 - \sum_{i=1}^L \mathbb{E}\left[\left(\psi_{m_i} \mathcal{N}_i\right)\left(\mu m_i + \sigma \sqrt{\frac{m_i(n - m_i)}{n}} \xi + \sigma \frac{m_i}{\sqrt{n}} \eta\right)\right].\label{eq:PNIsn}
\end{equation}
\end{thm}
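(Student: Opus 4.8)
The plan is to recognize both identities as the special case $h \equiv 1$ of the fundamental result, Theorem~\ref{thm:FunRes}. The starting point is the trivial observation that a marginal probability is the expectation of an indicator, namely $\mathbb{P}[N = m_i] = \mathbb{E}[1_{\{N = m_i\}}]$ and $\mathbb{P}[N = n] = \mathbb{E}[1_{\{N = n\}}]$. Taking $h$ to be the constant map $x \mapsto 1$, the integrability hypothesis $\mathbb{E}[|h(\eta)|] = 1 < \infty$ is automatically satisfied, so Theorem~\ref{thm:FunRes} applies directly, with no further analytic input required.

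For the first identity I would substitute $h \equiv 1$ into (\ref{eq:Fun1}). The left-hand side becomes $\mathbb{E}[1_{\{N = m_i\}}] = \mathbb{P}[N = m_i]$, while on the right-hand side the factor $h(\mu + \sigma m_i^{-1/2}\xi)$ equals $1$, leaving exactly $\mathbb{E}[(\psi_{m_i}\mathcal{N}_i)(\mu m_i + \sigma\sqrt{m_i}\,\xi)]$. This is precisely (\ref{eq:PNIsmi}).

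For the second identity I would substitute $h \equiv 1$ into (\ref{eq:Fun2}). Again the outer factor $h(\mu + \sigma n^{-1/2}\eta)$ collapses to $1$, so the left-hand side $\mathbb{P}[N = n]$ equals the expectation over $\eta$ of the bracketed term $1 - \sum_{i=1}^L \mathbb{E}_\xi[\cdots]$. The only point requiring care is that the inner expectation is taken over $\xi$ and still depends on $\eta$; by linearity of the expectation together with the independence of $\xi$ and $\eta$, pulling the outer $\mathbb{E}_\eta$ inside the sum recombines each term into a single joint expectation over the pair $(\xi,\eta)$, which yields (\ref{eq:PNIsn}) verbatim.

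The argument is essentially mechanical, and I do not anticipate a genuine obstacle: all the analytic content — the change of variables in (\ref{eq:NActsOnG0}) and the density formulas of Theorems~\ref{thm:JointDensity} and \ref{thm:JointDensity2} — has already been absorbed into Theorem~\ref{thm:FunRes}. The only thing to watch is the bookkeeping of which variable each expectation integrates, so that the nested $\mathbb{E}_\eta\mathbb{E}_\xi$ in the second formula is correctly merged into one expectation over the independent pair $(\xi,\eta)$.
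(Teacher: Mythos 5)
Your proposal is correct and is exactly the paper's own proof: Theorem \ref{thm:MarginalProbabilities} follows by applying (\ref{eq:Fun1}) and (\ref{eq:Fun2}) with $h(x) = 1$. Your extra remark about merging the nested $\mathbb{E}_\eta\,\mathbb{E}_\xi$ into a single expectation over the independent pair $(\xi,\eta)$ is a valid bookkeeping detail that the paper leaves implicit.
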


\begin{proof}
Applying (\ref{eq:Fun1}) with $h(x) = 1$, gives (\ref{eq:PNIsmi}). Furthermore, applying (\ref{eq:Fun2}) with $h(x) = 1$, gives (\ref{eq:PNIsn}).
\end{proof}

Next, we provide an explicit formula for the expected length of the trial.

\begin{thm}
Let $\xi$ and $\eta$ be independent random variables with law $N(0,1)$. Then 
\begin{eqnarray}
\mathbb{E}[N] &=& \sum_{i=1}^L m_i \mathbb{E}\left[\left(\psi_{m_i} \mathcal{N}_i\right)\left(\mu m_i + \sigma \sqrt{m_i} \xi\right)\right]\label{eq:ExL}\\
&&+ n \left(1 - \sum_{i=1}^L \mathbb{E}\left[\left(\psi_{m_i} \mathcal{N}_i\right)\left(\mu m_i + \sigma \sqrt{\frac{m_i(n - m_i)}{n}} \xi + \sigma \frac{m_i}{\sqrt{n}} \eta\right)\right]\right).\nonumber
\end{eqnarray}
\end{thm}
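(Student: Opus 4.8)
The plan is to exploit the fact that $N$ is a discrete random variable supported on the finite set $\{m_1, \ldots, m_L, n\}$. By property (a) in section 1, $N$ takes no values outside this set, so the elementary formula for the expectation of a discrete random variable gives
\begin{equation*}
\mathbb{E}[N] = \sum_{i=1}^L m_i\, \mathbb{P}[N = m_i] + n\, \mathbb{P}[N = n].
\end{equation*}
I would then substitute the explicit expressions for the marginal probabilities furnished by Theorem \ref{thm:MarginalProbabilities}, namely (\ref{eq:PNIsmi}) for each $\mathbb{P}[N = m_i]$ and (\ref{eq:PNIsn}) for $\mathbb{P}[N = n]$. Reading off the two resulting terms produces (\ref{eq:ExL}) verbatim, so the proof amounts to a one-line assembly step.

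There is essentially no genuine obstacle at this stage, and I want to be honest about that: all the analytic labour — integrating out the normal increments, encoded through the normal transform of section \ref{sec:NormalTransform} — was already carried out in the fundamental result Theorem \ref{thm:FunRes} and distilled into the marginal probabilities of Theorem \ref{thm:MarginalProbabilities}. The only thing to be careful about is bookkeeping: one must pair $m_i$ with the correct probability and retain the $\mathbb{E}[\cdot]$ over both $\xi$ and $\eta$ in the $\mathbb{P}[N=n]$ term. As a sanity check I would verify that the weights sum to one, i.e. $\sum_{i=1}^L \mathbb{P}[N=m_i] + \mathbb{P}[N=n] = 1$, which holds automatically because (\ref{eq:PNIsn}) is written as the complement of the sum of the $\mathbb{P}[N=m_i]$; this guarantees that $\mathbb{E}[N]$ is a bona fide convex combination of the admissible sample sizes, with $m_1 \le \mathbb{E}[N] \le n$.

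Alternatively, one could bypass Theorem \ref{thm:MarginalProbabilities} altogether and invoke the fundamental result directly with $h \equiv 1$, since
\begin{equation*}
\mathbb{E}[N] = \sum_{i=1}^L m_i\, \mathbb{E}\!\left[1_{\{N = m_i\}}\right] + n\, \mathbb{E}\!\left[1_{\{N = n\}}\right],
\end{equation*}
and each indicator expectation is precisely $\mathbb{E}[h(\widehat{\mu}_N) 1_{\{N = \cdot\}}]$ evaluated at the constant map $h=1$, i.e. (\ref{eq:Fun1}) and (\ref{eq:Fun2}). This is in fact the very route by which Theorem \ref{thm:MarginalProbabilities} was established, so the two approaches coincide; I would present whichever keeps the exposition most linear, most likely the first, as it simply cites the already-derived probabilities.
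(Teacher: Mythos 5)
Your proposal is correct and coincides with the paper's own proof, which likewise obtains (\ref{eq:ExL}) by writing $\mathbb{E}[N] = \sum_{i=1}^L m_i\,\mathbb{P}[N=m_i] + n\,\mathbb{P}[N=n]$ and substituting the marginal probabilities (\ref{eq:PNIsmi}) and (\ref{eq:PNIsn}) from Theorem \ref{thm:MarginalProbabilities}. Nothing further is needed.
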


\begin{proof}
This follows immediately from Theorem \ref{thm:MarginalProbabilities}.
\end{proof}

\section{The bias and the mean squared error}\label{sec:Bias}

The following result provides an explicit formula for the bias if $\widehat{\mu}_N$ is used to estimate $\mu$.

\begin{thm}\label{thm:Bias}
Let $\xi$ and $\eta$ be independent random variables with law $N(0,1)$. Then
\begin{eqnarray}
\lefteqn{\mathbb{E}[\widehat{\mu}_N - \mu]}\label{eq:Bias}\\ 
&=&  \sum_{i=1}^L \frac{\sigma}{\sqrt{m_i}}\mathbb{E}\left[\xi\left(\psi_{m_i} \mathcal{N}_i\right)\left(\mu m_i + \sigma \sqrt{m_i} \xi\right)\right]\nonumber\\
&& -  \frac{\sigma}{\sqrt{n}} \sum_{i=1}^L\mathbb{E}\left[ \eta \left(\psi_{m_i} \mathcal{N}_i\right)\left(\mu m_i + \sigma \sqrt{\frac{m_i(n - m_i)}{n}} \xi + \sigma \frac{m_i}{\sqrt{n}} \eta\right)\right].\nonumber
\end{eqnarray}
\end{thm}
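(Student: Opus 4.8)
The plan is to read off the bias formula directly from the fundamental result, Theorem \ref{thm:FunRes}, by specializing the test function to $h(x) = x - \mu$. Before invoking that theorem I would check its integrability hypothesis for this choice: with $\eta \sim N(0,1)$ we have $\mathbb{E}[|h(\eta)|] = \mathbb{E}[|\eta - \mu|] \leq \mathbb{E}[|\eta|] + |\mu| < \infty$, so the hypothesis is met. Since, by condition (a) of section \ref{sec:Intro}, the random variable $N$ takes only the values $m_1,\ldots,m_L,n$, the indicators $1_{\{N = m_1\}},\ldots,1_{\{N = m_L\}},1_{\{N = n\}}$ sum to $1$ almost surely, and hence
\begin{equation*}
\mathbb{E}[\widehat{\mu}_N - \mu] = \sum_{i=1}^L \mathbb{E}\left[(\widehat{\mu}_N - \mu) 1_{\{N = m_i\}}\right] + \mathbb{E}\left[(\widehat{\mu}_N - \mu) 1_{\{N = n\}}\right].
\end{equation*}

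Next I would evaluate the two groups of terms separately. For the terms with $N = m_i$, applying (\ref{eq:Fun1}) with $h(x) = x - \mu$ turns the factor $h(\mu + \frac{\sigma}{\sqrt{m_i}}\xi)$ into $\frac{\sigma}{\sqrt{m_i}}\xi$; pulling the deterministic constant $\sigma/\sqrt{m_i}$ out of the expectation reproduces the first sum in (\ref{eq:Bias}) directly. For the term with $N = n$, applying (\ref{eq:Fun2}) with the same $h$ replaces the leading factor by $\frac{\sigma}{\sqrt{n}}\eta$, and the expression splits into a contribution coming from the constant $1$ and a contribution coming from the sum over $i$.

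The only point that requires a small argument is the vanishing of the constant contribution. Here I would observe that the term arising from the $1$ equals $\frac{\sigma}{\sqrt{n}}\mathbb{E}[\eta]$, which is zero because $\eta$ is centered. For the remaining part, I would use the independence of $\xi$ and $\eta$ to merge the inner expectation (over $\xi$) with the outer one (over $\eta$) into a single joint expectation; this yields exactly the second sum in (\ref{eq:Bias}), including its minus sign. Collecting the two groups of terms then gives (\ref{eq:Bias}).

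I do not expect a genuine obstacle, since the statement is essentially a specialization of Theorem \ref{thm:FunRes}. The only places where care is needed are in correctly tracking the nested expectation structure of (\ref{eq:Fun2}) — remembering that the inner expectation is taken over $\xi$ and the outer over $\eta$ — and in recognizing that it is precisely the centering $\mathbb{E}[\eta] = 0$ that removes the otherwise awkward constant term, leaving a clean two-sum expression for the bias.
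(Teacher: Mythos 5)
Your proof is correct and follows exactly the paper's own argument: decompose $\mathbb{E}[\widehat{\mu}_N - \mu]$ over the events $\{N = m_i\}$ and $\{N = n\}$, then apply (\ref{eq:Fun1}) and (\ref{eq:Fun2}) with $h(x) = x - \mu$. The extra details you supply (the integrability check, the vanishing of the $\mathbb{E}[\eta]$ term, and the merging of nested expectations via independence) are precisely the bookkeeping the paper leaves implicit, and all are handled correctly.
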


\begin{proof}
We have
$$\mathbb{E}[\widehat{\mu}_N - \mu] = \sum_{i = 1}^L \mathbb{E}[\left(\widehat{\mu}_N - \mu\right) 1_{\{N = m_i\}}] + \mathbb{E}\left[\left(\widehat{\mu}_N - \mu\right)1_{\{N=n\}}\right].$$
Now (\ref{eq:Bias}) follows by applying (\ref{eq:Fun1}) and (\ref{eq:Fun2}) with $h(x) = x - \mu$.
\end{proof}

From Theorem \ref{thm:Bias}, we derive the following universal bound for the bias.

\begin{thm}\label{thm:BiasBound}
\begin{equation}
\left|\mathbb{E}[\widehat{\mu}_N - \mu]\right| \leq \sigma \sqrt{\frac{2}{\pi}} \left(\sum_{i=1}^L \frac{1}{\sqrt{m_i}} + \frac{L}{\sqrt{n}}\right).\label{eq:BiasBound}
\end{equation}
In particular, the bias vanishes if, for fixed $L$, $m_1 \to \infty$ (and hence $\forall i : m_i \rightarrow \infty$ and $n \rightarrow \infty$).
\end{thm}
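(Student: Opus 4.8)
The plan is to start from the explicit bias formula (\ref{eq:Bias}) in Theorem \ref{thm:Bias} and bound each of the two sums crudely. The essential observation is that, in every summand, the only factor not already controlled in absolute value by a constant is the standard-normal weight $\xi$ or $\eta$, whose first absolute moment is explicitly computable. So first I would apply the triangle inequality to (\ref{eq:Bias}), pulling the absolute value through both the finite sums over $i$ and through the expectations, obtaining
\begin{eqnarray*}
\left|\mathbb{E}[\widehat{\mu}_N - \mu]\right| &\leq& \sum_{i=1}^L \frac{\sigma}{\sqrt{m_i}}\, \mathbb{E}\left[|\xi|\,\left|\left(\psi_{m_i} \mathcal{N}_i\right)\left(\mu m_i + \sigma \sqrt{m_i} \xi\right)\right|\right] \\
&& + \frac{\sigma}{\sqrt{n}} \sum_{i=1}^L \mathbb{E}\left[|\eta|\,\left|\left(\psi_{m_i} \mathcal{N}_i\right)\left(\mu m_i + \sigma \sqrt{\tfrac{m_i(n-m_i)}{n}}\xi + \sigma \tfrac{m_i}{\sqrt{n}}\eta\right)\right|\right].
\end{eqnarray*}

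The key structural input is that each product $\psi_{m_i}\mathcal{N}_i$ takes values in $[0,1]$: the map $\psi_{m_i}$ is valued in $[0,1]$ by the standing assumption of Section \ref{sec:Intro}, and $\mathcal{N}_i$ is valued in $[0,1]$ by Theorem \ref{lem:NBetween0And1}. Consequently, pointwise in $(\xi,\eta)$, each factor $\left|(\psi_{m_i}\mathcal{N}_i)(\cdots)\right| \leq 1$, so it can simply be discarded from every expectation at the cost of an inequality. This reduces each expectation to $\mathbb{E}[|\xi|]$, respectively $\mathbb{E}[|\eta|]$. Since $\xi$ and $\eta$ are $N(0,1)$, we have $\mathbb{E}[|\xi|] = \mathbb{E}[|\eta|] = \sqrt{2/\pi}$, and collecting terms gives exactly the right-hand side of (\ref{eq:BiasBound}).

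For the final assertion, I would fix $L$ and let $m_1 \to \infty$. Because $0 < m_1 < m_2 < \cdots < m_L < n$, this forces $m_i \to \infty$ for every $i$ and also $n \to \infty$. The right-hand side of (\ref{eq:BiasBound}) is then a fixed finite number ($L+1$ terms) of summands, each of order $m_i^{-1/2}$ or $n^{-1/2}$, hence it tends to $0$; the bias vanishes by squeezing.

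I do not expect a genuine obstacle here, since the substantive work, namely the explicit bias formula and the bound $0 \leq \mathcal{N}_i \leq 1$, is already in place. The only point requiring a word of care is the $n$-term, where the argument of $\psi_{m_i}\mathcal{N}_i$ depends on \emph{both} $\xi$ and $\eta$; but the bound by $1$ is a uniform pointwise estimate, so dominated convergence (or plain monotonicity of the integral) justifies discarding that factor inside the joint expectation without any measurability or integrability subtlety.
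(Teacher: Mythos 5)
Your proposal is correct and takes essentially the same route as the paper's own proof: both start from the bias formula (\ref{eq:Bias}), use Theorem \ref{lem:NBetween0And1} (together with $0 \leq \psi_{m_i} \leq 1$) to bound each factor $\left(\psi_{m_i}\mathcal{N}_i\right)(\cdots)$ by $1$, and then reduce every term to $\mathbb{E}[\left|\xi\right|] = \mathbb{E}[\left|\eta\right|] = \sqrt{2/\pi}$ via the triangle inequality. The only difference is presentational: you pull the absolute values inside the expectations first and discard the $[0,1]$-valued factor pointwise, whereas the paper bounds each whole expectation at once; the content is identical.
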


\begin{proof}
Theorem \ref{lem:NBetween0And1} shows that each $\psi_{m_i} \mathcal{N}_i$ takes values between $0$ and $1$. Therefore, for $\xi$ and $\eta$ with law $N(0,1)$,
$$\left|\mathbb{E}\left[\xi\left(\psi_{m_i} \mathcal{N}_i\right)\left(\mu m_i + \sigma \sqrt{m_i} \xi\right)\right]\right| \leq \mathbb{E}[\left|\xi\right|] = \sqrt{\frac{2}{\pi}}$$
and
$$\left|\mathbb{E}\left[ \eta \left(\psi_{m_i} \mathcal{N}_i\right)\left(\mu m_i + \sigma \sqrt{\frac{m_i(n - m_i)}{n}} \xi + \sigma \frac{m_i}{\sqrt{n}} \eta\right)\right]\right| \leq \mathbb{E}[\left|\eta\right|] = \sqrt{\frac{2}{\pi}}.$$
Now (\ref{eq:BiasBound}) follows easily from (\ref{eq:Bias}).
\end{proof}

The following result provides a formula, similar to (\ref{eq:Bias}), for the mean squared error (MSE).

\begin{thm}\label{thm:MSE}
Let $\xi$ and $\eta$ be independent random variables with law $N(0,1)$. Then
\begin{eqnarray}
\lefteqn{\mathbb{E}\left[\left(\widehat{\mu}_N - \mu\right)^2\right]}\label{eq:MSE}\\
&=&  \sum_{i=1}^L \frac{\sigma^2}{m_i}\mathbb{E}\left[\xi^2\left(\psi_{m_i} \mathcal{N}_i\right)\left(\mu m_i + \sigma \sqrt{m_i} \xi\right)\right]\nonumber\\
&& + \frac{\sigma^2}{n} - \frac{\sigma^2}{n} \sum_{i=1}^L\mathbb{E}\left[ \eta^2 \left(\psi_{m_i} \mathcal{N}_i\right)\left(\mu m_i + \sigma \sqrt{\frac{m_i(n - m_i)}{n}} \xi + \sigma \frac{m_i}{\sqrt{n}} \eta\right)\right].\nonumber
\end{eqnarray}
\end{thm}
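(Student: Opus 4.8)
The plan is to follow exactly the template of the bias proof in Theorem~\ref{thm:Bias}, but with the choice $h(x) = (x - \mu)^2$ in the fundamental result Theorem~\ref{thm:FunRes}. Since $\mathbb{E}[|h(\eta)|] = \mathbb{E}[\eta^2] = 1 < \infty$, the hypothesis of that theorem is satisfied. First I would write the standard decomposition over the possible values of $N$,
$$\mathbb{E}\left[\left(\widehat{\mu}_N - \mu\right)^2\right] = \sum_{i=1}^L \mathbb{E}\left[\left(\widehat{\mu}_N - \mu\right)^2 1_{\{N = m_i\}}\right] + \mathbb{E}\left[\left(\widehat{\mu}_N - \mu\right)^2 1_{\{N = n\}}\right],$$
and then treat the two kinds of terms separately.

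For each $i \in \{1,\ldots,L\}$, applying (\ref{eq:Fun1}) with $h(x) = (x-\mu)^2$ and observing that $h\left(\mu + \frac{\sigma}{\sqrt{m_i}}\xi\right) = \frac{\sigma^2}{m_i}\xi^2$ immediately yields
$$\mathbb{E}\left[\left(\widehat{\mu}_N - \mu\right)^2 1_{\{N = m_i\}}\right] = \frac{\sigma^2}{m_i}\mathbb{E}\left[\xi^2 \left(\psi_{m_i} \mathcal{N}_i\right)\left(\mu m_i + \sigma \sqrt{m_i} \xi\right)\right],$$
which is precisely the first sum in (\ref{eq:MSE}).

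For the boundary term at $n$, I would apply (\ref{eq:Fun2}) with the same $h$, now using $h\left(\mu + \frac{\sigma}{\sqrt{n}}\eta\right) = \frac{\sigma^2}{n}\eta^2$. Distributing $\frac{\sigma^2}{n}\eta^2$ across the bracket produces two pieces: its product with $1$, whose expectation is $\frac{\sigma^2}{n}\mathbb{E}[\eta^2] = \frac{\sigma^2}{n}$ and supplies the isolated variance term, and its product with the sum over $i$. The one point deserving care is the nested expectation structure of (\ref{eq:Fun2}): the inner expectation is taken over $\xi$ only, while $\eta^2$ is constant with respect to $\xi$, so by the independence of $\xi$ and $\eta$ the factor $\eta^2$ may be pulled into the joint expectation to give $-\frac{\sigma^2}{n}\sum_{i=1}^L \mathbb{E}\left[\eta^2 \left(\psi_{m_i} \mathcal{N}_i\right)\left(\mu m_i + \sigma \sqrt{\frac{m_i(n-m_i)}{n}}\xi + \sigma \frac{m_i}{\sqrt{n}}\eta\right)\right]$. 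Adding the two blocks reproduces (\ref{eq:MSE}).

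Since every step is a direct substitution into Theorem~\ref{thm:FunRes}, there is no genuine analytic obstacle; the only subtlety to watch is the bookkeeping in the $n$-term, where one must recognize that $\eta^2$ factors through the inner $\xi$-expectation and that $\mathbb{E}[\eta^2] = 1$ is exactly what produces the clean $\sigma^2/n$ contribution.
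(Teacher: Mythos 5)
Your proposal is correct and follows exactly the paper's own argument: the same decomposition of the MSE over the events $\{N = m_i\}$ and $\{N = n\}$, followed by applying (\ref{eq:Fun1}) and (\ref{eq:Fun2}) with $h(x) = (x-\mu)^2$. The paper states this in one line; you have merely made explicit the substitutions and the Fubini/independence step that pulls $\eta^2$ through the inner $\xi$-expectation, which is a faithful (and correct) elaboration rather than a different route.
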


\begin{proof}
We have
$$\mathbb{E}\left[\left(\widehat{\mu}_N - \mu\right)^2\right] = \sum_{i = 1}^L \mathbb{E}\left[\left(\widehat{\mu}_N - \mu\right)^2 1_{\{N = m_i\}}\right] + \mathbb{E}\left[\left(\widehat{\mu}_N - \mu\right)^21_{\{N=n\}}\right].$$
Now (\ref{eq:MSE}) follows by applying (\ref{eq:Fun1}) and (\ref{eq:Fun2}) with $h(x) = (x - \mu)^2$.
\end{proof}

Finally, from Theorem \ref{thm:MSE}, we derive the following universal bound for the MSE.

\begin{thm}
\begin{equation}
\mathbb{E}\left[\left(\widehat{\mu}_N - \mu\right)^2\right] \leq \sigma^2 \left(\sum_{i=1}^L \frac{1}{m_i} + \frac{L+1}{n}\right).\label{eq:BoundMSE}
\end{equation}
In particular, the MSE vanishes if, for fixed $L$, $m_1 \to \infty$ (and hence $\forall i : m_i \rightarrow \infty$ and $n \rightarrow \infty$).
\end{thm}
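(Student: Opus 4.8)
The plan is to derive the bound \eqref{eq:BoundMSE} directly from the exact MSE formula \eqref{eq:MSE} established in Theorem \ref{thm:MSE}, using the same strategy that produced the bias bound \eqref{eq:BiasBound}. The starting point is to recognize that formula \eqref{eq:MSE} expresses the MSE as a sum of three pieces: a sum over the interim stopping points $m_i$ with prefactor $\sigma^2/m_i$, a leading term $\sigma^2/n$, and a subtracted sum with prefactor $\sigma^2/n$. The key structural input is Theorem \ref{lem:NBetween0And1}, which guarantees that each $\psi_{m_i}\mathcal{N}_i$ takes values in $[0,1]$; this is exactly what lets me control each expectation by a pure moment of a standard normal.

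First I would bound the interim sum. Since $0 \leq (\psi_{m_i}\mathcal{N}_i)(\cdot) \leq 1$, each expectation $\mathbb{E}\left[\xi^2(\psi_{m_i}\mathcal{N}_i)(\mu m_i + \sigma\sqrt{m_i}\,\xi)\right]$ is at most $\mathbb{E}[\xi^2] = 1$, so the first sum is bounded by $\sum_{i=1}^L \sigma^2/m_i$. Next I would handle the final two terms together: the leading $\sigma^2/n$ is kept as is, and for the subtracted sum I would simply drop it, observing that each term in it is nonnegative (again because $\psi_{m_i}\mathcal{N}_i \geq 0$ and $\eta^2 \geq 0$), so discarding the negative contribution only increases the upper bound. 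This is slightly wasteful but is the clean way to obtain a universal bound. However, comparing with the claimed $(L+1)/n$ coefficient, the intended route is evidently \emph{not} to discard the subtracted sum but rather to bound its magnitude: each subtracted expectation is at most $\mathbb{E}[\eta^2]=1$ in absolute value, giving a contribution bounded by $L\sigma^2/n$, which combines with the leading $\sigma^2/n$ to yield exactly $(L+1)\sigma^2/n$. Combining this with the interim bound produces \eqref{eq:BoundMSE}.

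The asymptotic conclusion then follows immediately: fix $L$ and let $m_1 \to \infty$. Since $m_1 < m_2 < \cdots < m_L < n$, we have $m_i \geq m_1 \to \infty$ for every $i$ and $n > m_L \to \infty$, so each of the $L+1$ reciprocal terms on the right of \eqref{eq:BoundMSE} tends to $0$; as $L$ is held fixed, the finite sum vanishes, forcing the MSE to $0$.

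The main obstacle is essentially bookkeeping rather than conceptual: one must be careful about whether the subtracted sum in \eqref{eq:MSE} is discarded or bounded in absolute value, since these give different coefficients ($1/n$ versus $(L+1)/n$) on the leading term, and the stated bound corresponds to taking absolute values termwise. The only genuine mathematical ingredient is the $[0,1]$-valuedness from Theorem \ref{lem:NBetween0And1} together with the elementary normal moment $\mathbb{E}[\xi^2]=\mathbb{E}[\eta^2]=1$; everything else is the triangle inequality applied to \eqref{eq:MSE}.
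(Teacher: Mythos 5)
Your proposal is correct and follows essentially the same route as the paper, whose proof is simply the remark that the bound ``is derived from Theorem \ref{thm:MSE} in the same way as Theorem \ref{thm:BiasBound} was derived from Theorem \ref{thm:Bias}'' --- i.e.\ precisely your termwise argument using the $[0,1]$-valuedness of each $\psi_{m_i}\mathcal{N}_i$ (Theorem \ref{lem:NBetween0And1}) together with $\mathbb{E}[\xi^2]=\mathbb{E}[\eta^2]=1$ and the triangle inequality applied to (\ref{eq:MSE}). Your side observation that discarding the nonnegative subtracted sum even yields the sharper coefficient $1/n$ in place of $(L+1)/n$ is valid, but the stated bound indeed corresponds to the termwise absolute-value estimate, exactly as you concluded.
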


\begin{proof}
This is derived from Theorem \ref{thm:MSE} in the same way as Theorem \ref{thm:BiasBound} was derived from Theorem \ref{thm:Bias}.
\end{proof}

We wish to conclude this section with the following remarks:
\begin{enumerate}
\item The bounds (\ref{eq:BiasBound}) and (\ref{eq:BoundMSE}) hold for the generic stopping rule described by (a), (b), and (c) in section 1, which contains many classical stopping rules as a special case. Therefore, both bounds have a wide range of applicability.

\item The fact that $0 < m_1 < m_2 < \ldots < m_L < n$ allows us to derive from (\ref{eq:BiasBound}) that
$$\left|\mathbb{E}[\widehat{\mu}_N - \mu]\right| \leq \frac{2 \sigma L \sqrt{\frac{2}{\pi}}}{\sqrt{m_1}}.$$
That is, for fixed $L$, the bias converges to $0$ as $m_1 \rightarrow \infty$ at least with rate $1/\sqrt{m_1}$. Moreover, this rate is optimal. Indeed, taking $\mu = 0$, $\sigma = 1$, $L = 1$, $m_1 = m$, $n = 2m$, and $\psi_m(x) = 1_{\mathbb{R}^+}$, the characteristic function of the set $\mathbb{R}^+$, leads to a trial with maximal length $2m$, in which one interim analysis is performed at $m$. The trial is stopped if $K_m \geq 0$, and continued otherwise. In this case, for independent $\xi$ and $\eta$ with law $N(0,1)$,
\begin{equation*}
\mathbb{E}\left[\xi \psi_m(\sqrt{m} \xi)\right] = \int_0^\infty u \phi(u) du = \phi(0) = \frac{1}{\sqrt{2 \pi}}\label{eq:ExampleC1}
\end{equation*}
and
\begin{equation*}
\mathbb{E}\left[\eta \psi_m\left(\sqrt{\frac{m}{2}} \xi + \sqrt{\frac{m}{2}} \eta\right)\right] = \mathbb{E} \left[\int_{- \xi}^\infty u \phi(u) du\right] = \mathbb{E}[\phi(\xi)] = \int_{-\infty}^\infty \phi^2(u) du = \frac{1}{2\sqrt{\pi}},\label{eq:ExampleC2}
\end{equation*}
from which we deduce that (\ref{eq:Bias}) reduces to
\begin{equation*}
\mathbb{E}\left[\widehat{\mu}_N - \mu\right] = \frac{1}{2 \sqrt{2 \pi}} \frac{1}{\sqrt{m}}.
\end{equation*}
\item The fact that $0 < m_1 < m_2 < \ldots < m_L < n$ allows us to derive from (\ref{eq:MSE}) that 
$$\mathbb{E}\left[\left(\widehat{\mu}_N - \mu\right)^2\right] \leq \frac{\sigma^2 (2 L + 1)}{m_1}.$$
That is, for fixed $L$, the MSE converges to $0$ as $m_1 \rightarrow \infty$ at least with rate $1/m_1$. This rate is again optimal. Indeed, take, as in the previous remark, $\mu = 0$, $\sigma = 1$, $L = 1$, $m_1 = m$, $n = 2m$, and $\psi_m(x) = 1_{\mathbb{R}^+}$. Then, for independent $\xi$ and $\eta$ with law $N(0,1)$,
\begin{equation*}
\mathbb{E}\left[\xi^2 \psi_m(\sqrt{m} \xi)\right] = \int_0^\infty u^2 \phi(u) du = \frac{1}{2}
\end{equation*}
and
\begin{eqnarray*}
\lefteqn{\mathbb{E}\left[\eta^2 \psi_m \left(\sqrt{\frac{m}{2}} \xi + \sqrt{\frac{m}{2}} \eta\right)\right]}\\ 
&=& \mathbb{E} \left[\int_{- \xi}^\infty u^2 \phi(u) du\right] = \mathbb{E}[\Phi(\xi)] + \mathbb{E}[\xi\phi(\xi)] = \int_{-\infty}^\infty \phi(u) \Phi(u) du + \int_{-\infty}^\infty u \phi^2(u) du = \frac{1}{2},
 \end{eqnarray*}
 which shows that (\ref{eq:MSE}) becomes
 \begin{equation*}
 \mathbb{E}\left[\left(\widehat{\mu}_N - \mu\right)^2\right]  =  \frac{3}{4m}.
 \end{equation*}

\item One frequently encounters a group sequential trial in which an interim analysis is performed after every $m$ observations, with $m \in \mathbb{N}_0$ fixed. In our setting, this corresponds to the choices $m_i = im$, where $i \in \{1,\ldots,L\}$, and $n = (L+1)m$. In this case, the bound (\ref{eq:BoundMSE}) reduces to
\begin{equation}
\mathbb{E}\left[\left(\widehat{\mu}_N - \mu\right)^2\right] \leq  \frac{\sigma^2}{m} \left(1 + \sum_{i=1}^L \frac{1}{i}\right) \leq \frac{\sigma^2}{m} \left(2 + \log(L)\right),\label{eq:LogG}
\end{equation}
where $\log$ is the natural logarithm and the last inequality follows by
$$\sum_{i=1}^L \frac{1}{i} = 1 + \sum_{i=2}^L \frac{1}{i} \leq 1 + \int_1^L \frac{dx}{x} = 1 + \log(L),$$
which is seen by comparing on $[1,\infty[$ the graph of the map $y = 1/x$ with the graph of the map that constantly takes the value $1/i$ on $[i-1,i]$, where $i \in \{2,3,\ldots\}$. Taking e.g. $\sigma = 1$, $m = 40$, and $L = 9$, corresponds to a trial of maximal length $400$ in which interim analyses are performed after every $40$ observations. In this case, (\ref{eq:LogG}) gives
$$\mathbb{E}\left[\left(\widehat{\mu}_N - \mu\right)^2\right] \leq \frac{1}{40} (2 + \log(9)) \approx 0.105.$$

\item Again in a trial in which interim analyses are performed after every $m$ observations, the inequality
$$\mathbb{E}\left[\left|\widehat{\mu}_N - \mu\right|\right] \leq \left(\mathbb{E}\left[\left(\widehat{\mu}_N - \mu\right)^2\right]\right)^{1/2}$$
allows us to derive from (\ref{eq:LogG}) the following bound for the bias:
\begin{equation}
\left|\mathbb{E}[\widehat{\mu}_N - \mu]\right| \leq  \sigma \sqrt{\frac{2 + \log(L)}{m}}.\label{eq:LogGB}
\end{equation}

\item Our results show that it is beneficial to start with a sufficiently large first contingent. The bias and the MSE are then generally acceptably small, even when gauged through the uniform bounds (\ref{eq:BiasBound}) and (\ref{eq:BoundMSE}). For specific stopping rules, results may be much sharper, as will be illustrated by our simulation study in section \ref{sec:Simulations}. The question may arise as to whether it is ethical to expose a relatively large first contingent. However, this issue should be approached cautiously. One should consider the expected trial length; designs should be chosen by concentrating on this quantity, rather than on the minimal length. Indeed, a very small minimal length, combined with a very low probability for this to be realized in a given study, is of little value.
\end{enumerate}

\section{Asymptotic normality and confidence intervals}\label{sec:AsymNor}

In this section, we will establish asymptotic normality in total variation distance for $\widehat{\mu}_N$. We need the following lemma.

\begin{lem}
Let $\xi$ and $\eta$ be independent random variables with law $N(0,1)$. Then, for a bounded Borel measurable map $f : \mathbb{R} \rightarrow \mathbb{R}$,
\begin{eqnarray}
\lefteqn{\mathbb{E}\left[f\left(\frac{\sqrt{N}}{\sigma} \left(\widehat{\mu}_N - \mu\right)\right)\right] = \mathbb{E}[f(\xi)]}\label{eq:dtvlem}\\
&&+ \sum_{i=1}^L\mathbb{E}\left[f(\xi) \left(\left(\psi_{m_i} \mathcal{N}_i\right)\left(\mu m_i + \sigma \sqrt{m_i} \xi\right) - \left(\psi_{m_i} \mathcal{N}_i\right)\left(\mu m_i + \sigma \sqrt{\frac{m_i(n - m_i)}{n}} \xi + \sigma \frac{m_i}{\sqrt{n}} \eta\right)\right) \right].\nonumber
\end{eqnarray}
\end{lem}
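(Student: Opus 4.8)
The plan is to compute $\mathbb{E}\left[f\left(\frac{\sqrt{N}}{\sigma}(\widehat{\mu}_N - \mu)\right)\right]$ by decomposing over the possible values of $N$, exactly as was done in the proofs of Theorems \ref{thm:Bias} and \ref{thm:MSE}. Writing
$$\mathbb{E}\left[f\left(\frac{\sqrt{N}}{\sigma}(\widehat{\mu}_N - \mu)\right)\right] = \sum_{i=1}^L \mathbb{E}\left[f\left(\frac{\sqrt{m_i}}{\sigma}(\widehat{\mu}_N - \mu)\right) 1_{\{N = m_i\}}\right] + \mathbb{E}\left[f\left(\frac{\sqrt{n}}{\sigma}(\widehat{\mu}_N - \mu)\right) 1_{\{N = n\}}\right],$$
I would apply the fundamental result, Theorem \ref{thm:FunRes}, to each term. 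The natural choice is to take $h$ depending on the relevant sample size: on $\{N = m_i\}$ one uses $h_{m_i}(x) = f\left(\frac{\sqrt{m_i}}{\sigma}(x - \mu)\right)$, and on $\{N = n\}$ one uses $h_n(x) = f\left(\frac{\sqrt{n}}{\sigma}(x - \mu)\right)$. Since $f$ is bounded, the integrability hypothesis $\mathbb{E}[|h(\eta)|] < \infty$ required by Theorem \ref{thm:FunRes} is automatically satisfied.

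The key simplification is that these choices of $h$ are engineered so that the argument $\frac{\sqrt{m}}{\sigma}(\widehat{\mu} - \mu)$ collapses to a clean standard normal variable after applying \eqref{eq:Fun1} and \eqref{eq:Fun2}. Concretely, applying \eqref{eq:Fun1} with $h = h_{m_i}$ gives
$$\mathbb{E}\left[f\left(\frac{\sqrt{m_i}}{\sigma}\left(\mu + \frac{\sigma}{\sqrt{m_i}}\xi - \mu\right)\right)\left(\psi_{m_i}\mathcal{N}_i\right)\left(\mu m_i + \sigma \sqrt{m_i}\xi\right)\right] = \mathbb{E}\left[f(\xi)\left(\psi_{m_i}\mathcal{N}_i\right)\left(\mu m_i + \sigma \sqrt{m_i}\xi\right)\right],$$
since $\frac{\sqrt{m_i}}{\sigma}\cdot\frac{\sigma}{\sqrt{m_i}}\xi = \xi$; this produces precisely the first $\psi_{m_i}\mathcal{N}_i$ term in the bracketed sum of \eqref{eq:dtvlem}. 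Applying \eqref{eq:Fun2} with $h = h_n$ works in the same way: the argument $\frac{\sqrt{n}}{\sigma}\left(\mu + \frac{\sigma}{\sqrt{n}}\eta - \mu\right) = \eta$ converts the leading factor $f\left(\mu + \frac{\sigma}{\sqrt{n}}\eta\right)$ into $f(\eta)$, and the factor $1 - \sum_i \mathbb{E}[\cdots]$ distributes over $f(\eta)$ to yield $\mathbb{E}[f(\eta)] - \sum_i \mathbb{E}[f(\eta)(\psi_{m_i}\mathcal{N}_i)(\cdots)]$, where the inner argument is $\mu m_i + \sigma\sqrt{\frac{m_i(n-m_i)}{n}}\xi + \sigma\frac{m_i}{\sqrt{n}}\eta$.

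The final step is a bookkeeping reconciliation. I would note that $\mathbb{E}[f(\eta)] = \mathbb{E}[f(\xi)]$ since $\xi$ and $\eta$ are identically distributed, which supplies the isolated $\mathbb{E}[f(\xi)]$ term in \eqref{eq:dtvlem}. Then the $L$ positive terms coming from the $\{N = m_i\}$ contributions and the $L$ negative terms coming from the expansion of the $\{N = n\}$ contribution pair up index by index, and in each negative term the dummy variable in the outer expectation can be renamed from $\eta$ to $\xi$ (they are i.i.d.), matching the form displayed in \eqref{eq:dtvlem}. I do not anticipate a serious obstacle here; the only point requiring mild care is the interchange needed to pull $f(\eta)$ through the inner expectation over $\xi$ in the second term, which is justified by Fubini using boundedness of $f$ and of $\psi_{m_i}\mathcal{N}_i$ (the latter by Theorem \ref{lem:NBetween0And1}), together with the independence of $\xi$ and $\eta$ already exploited in the proof of Theorem \ref{thm:FunRes}.
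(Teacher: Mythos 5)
Your strategy is the same as the paper's own proof (decompose over the values of $N$, then apply Theorem \ref{thm:FunRes} with $h_{m_i}(x)=f(\frac{\sqrt{m_i}}{\sigma}(x-\mu))$ and $h_n(x)=f(\frac{\sqrt{n}}{\sigma}(x-\mu))$), and your first two steps are carried out correctly: (\ref{eq:Fun1}) yields the positive terms, and (\ref{eq:Fun2}) yields $\mathbb{E}[f(\eta)] - \sum_{i=1}^L \mathbb{E}\,[f(\eta)\,(\psi_{m_i}\mathcal{N}_i)(\mu m_i + \sigma\sqrt{\tfrac{m_i(n-m_i)}{n}}\,\xi + \sigma\tfrac{m_i}{\sqrt{n}}\,\eta)]$, in which the argument of $f$ is the variable carrying the coefficient $\sigma m_i/\sqrt{n}$ inside $\psi_{m_i}\mathcal{N}_i$. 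The gap is your final ``bookkeeping'' step. You cannot rename the ``dummy variable in the outer expectation'' from $\eta$ to $\xi$, because $\eta$ is not an outer dummy: it appears simultaneously as the argument of $f$ and inside the argument of $\psi_{m_i}\mathcal{N}_i$, and these two occurrences are correlated. A legitimate relabeling must swap $\xi\leftrightarrow\eta$ everywhere, producing $\mathbb{E}\,[f(\xi)\,(\psi_{m_i}\mathcal{N}_i)(\mu m_i + \sigma\tfrac{m_i}{\sqrt{n}}\,\xi + \sigma\sqrt{\tfrac{m_i(n-m_i)}{n}}\,\eta)]$, where the correlation between $f$'s argument and the argument of $\psi_{m_i}\mathcal{N}_i$ is $\sqrt{m_i/n}$; in the term displayed in (\ref{eq:dtvlem}) that correlation is $\sqrt{(n-m_i)/n}$. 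For a generic Borel $\psi_{m_i}$ these are different bivariate normal configurations, so the expectations differ and the renaming does not give (\ref{eq:dtvlem}).

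Moreover, this is not a defect a cleverer last step could repair: what you had before the renaming is the correct identity, and (\ref{eq:dtvlem}) as printed is false. Take $L=1$, $\mu=0$, $\sigma=1$, $m_1=1$, $n=4$, and $\psi_{m_1}=f=1_{[0,\infty)}$ (so $\mathcal{N}_1=1$). The left-hand side of (\ref{eq:dtvlem}) is $\mathbb{P}[K_N\geq 0]=\mathbb{P}[K_1\geq 0]+\mathbb{P}[K_1<0,\,K_4\geq 0]=\tfrac12+\tfrac16=\tfrac23$, whereas the right-hand side is $1-\mathbb{P}\,[\xi\geq 0,\ \tfrac{\sqrt3}{2}\xi+\tfrac12\eta\geq 0] = 1-(\tfrac14+\tfrac{1}{2\pi}\arcsin\tfrac{\sqrt3}{2})=\tfrac{7}{12}$. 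Your pre-renaming formula instead subtracts $\mathbb{P}\,[\eta\geq 0,\ \tfrac{\sqrt3}{2}\xi+\tfrac12\eta\geq 0]=\tfrac14+\tfrac{1}{2\pi}\arcsin\tfrac12=\tfrac13$ and returns $\tfrac23$, the correct value. So your attempt is in fact more careful than the paper's two-line proof, which asserts that applying (\ref{eq:Fun1}) and (\ref{eq:Fun2}) ``gives (\ref{eq:dtvlem})'' and thereby silently commits exactly the swap you tried to justify. The distinction matters downstream: the proof of (\ref{eq:dtvlim}) relies on the printed (incorrect) pairing so that the two arguments of $\psi_{m_i}\mathcal{N}_i$ merge pointwise as $n\to\infty$; with the correct pairing the second argument tends to $\mu m_i+\sigma\sqrt{m_i}\,\eta$ rather than $\mu m_i+\sigma\sqrt{m_i}\,\xi$, and that convergence argument collapses.
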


\begin{proof}
We have
\begin{eqnarray*}
\mathbb{E}\left[f\left(\frac{\sqrt{N}}{\sigma} \left(\widehat{\mu}_N - \mu\right)\right)\right]
= \sum_{i=1}^L \mathbb{E}\left[f\left(\frac{\sqrt{m_i}}{\sigma} \left(\widehat{\mu}_{N} - \mu\right)\right)1_{\{N = m_i\}}\right] + \mathbb{E}\left[f\left(\frac{\sqrt{n}}{\sigma} \left(\widehat{\mu}_N - \mu\right)\right) 1_{\{N = n\}}\right]. 
\end{eqnarray*}
Now apply (\ref{eq:Fun1}) with $h(x) = f\left(\frac{\sqrt{m_i}}{\sigma} \left(x - \mu\right)\right)$ and (\ref{eq:Fun2}) with $h(x) = f\left(\frac{\sqrt{n}}{\sigma} \left(x - \mu\right)\right)$. This gives (\ref{eq:dtvlem}).
\end{proof}

Recall that the total variation distance between random variables $\zeta_1$ and $\zeta_2$ is given by
$$d_{TV}(\zeta_1,\zeta_2) = \sup_{A} \left|\mathbb{P}[\zeta_1 \in A] - \mathbb{P}[\zeta_2 \in A]\right|,$$
the supremum running over all Borel measurable sets $A \subset \mathbb{R}$. It is well known that convergence in total variation distance implies weak convergence, but that the converse generally fails to hold.

The following result provides an explicit bound for the total variation distance between the standard normal distribution and the law of the quantity $\frac{\sqrt{N}}{\sigma} \left(\widehat{\mu}_N - \mu\right)$.

\begin{thm}\label{thm:dtvbd}
Let $\xi$ and $\eta$ be independent random variables with law $N(0,1)$. Then
\begin{eqnarray}
\lefteqn{d_{TV}\left(N(0,1),\frac{\sqrt{N}}{\sigma} \left(\widehat{\mu}_N - \mu\right)\right)}\label{eq:dtvbound}\\
&&\leq \sum_{i=1}^L \mathbb{E}\left[ \left|\left(\psi_{m_i} \mathcal{N}_i\right)\left(\mu m_i + \sigma \sqrt{m_i} \xi\right) - \left(\psi_{m_i} \mathcal{N}_i\right)\left(\mu m_i + \sigma \sqrt{\frac{m_i(n - m_i)}{n}} \xi + \sigma \frac{m_i}{\sqrt{n}} \eta\right)\right| \right].\nonumber
\end{eqnarray}
In particular, if the $\psi_{m_i}$ are continuous, for fixed $L$ and $m_1,\ldots,m_L$,
\begin{equation}
\lim_{n \rightarrow \infty} d_{TV}\left(N(0,1),\frac{\sqrt{N}}{\sigma} \left(\widehat{\mu}_N - \mu\right)\right) = 0.\label{eq:dtvlim}
\end{equation}
\end{thm}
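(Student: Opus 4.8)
The plan is to obtain the bound \eqref{eq:dtvbound} directly from the preceding lemma, and then to pass to the limit \eqref{eq:dtvlim} using a dominated-convergence argument exploiting the continuity of the $\psi_{m_i}$.

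First I would recall the standard characterization of total variation distance as a supremum over bounded measurable test functions: for random variables $\zeta_1,\zeta_2$,
$$d_{TV}(\zeta_1,\zeta_2) = \sup_{\|f\|_\infty \leq 1} \left|\mathbb{E}[f(\zeta_1)] - \mathbb{E}[f(\zeta_2)]\right|,$$
the supremum running over Borel measurable $f : \mathbb{R} \to \mathbb{R}$ bounded by $1$ in absolute value. Applying this with $\zeta_1 = \xi \sim N(0,1)$ and $\zeta_2 = \frac{\sqrt{N}}{\sigma}(\widehat{\mu}_N - \mu)$, I would substitute the identity \eqref{eq:dtvlem} from the lemma. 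The leading term $\mathbb{E}[f(\xi)]$ cancels against $\mathbb{E}[f(\zeta_1)]$, leaving exactly the summation term. Taking absolute values, moving the modulus inside the finite sum and inside each expectation, and using $|f(\xi)| \leq 1$ to discard the test function, one arrives immediately at \eqref{eq:dtvbound}. The supremum over $f$ is then bounded by a quantity independent of $f$, which is precisely the claimed bound.

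For the limit \eqref{eq:dtvlim}, I would fix $L$ and $m_1,\ldots,m_L$ and examine the right-hand side of \eqref{eq:dtvbound} as $n \to \infty$. The key observation is that as $n \to \infty$ the coefficient $\sqrt{m_i(n-m_i)/n} \to \sqrt{m_i}$ while $m_i/\sqrt{n} \to 0$, so that the second argument
$$\mu m_i + \sigma \sqrt{\tfrac{m_i(n - m_i)}{n}}\,\xi + \sigma \tfrac{m_i}{\sqrt{n}}\,\eta \;\longrightarrow\; \mu m_i + \sigma \sqrt{m_i}\,\xi$$
converges almost surely (for fixed $\xi,\eta$) to the first argument $\mu m_i + \sigma\sqrt{m_i}\,\xi$. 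Since each $\psi_{m_i}$ is continuous by hypothesis, and each $\mathcal{N}_i$ is itself continuous (being built, via the recursion of Theorem \ref{thm:RecursiveDescriptionNormalTransform}, from expectations of continuous shifts of the continuous maps $1-\psi_{m_j}$, which pass through the integral by dominated convergence as the $\psi_{m_j}$ are bounded), the product $\psi_{m_i}\mathcal{N}_i$ is continuous. Hence the integrand inside each expectation converges pointwise to zero almost surely. Because $\psi_{m_i}\mathcal{N}_i$ takes values in $[0,1]$ by Theorem \ref{lem:NBetween0And1}, each integrand is bounded by $1$, so the dominated convergence theorem yields that every term of the finite sum tends to $0$, whence the whole bound tends to $0$ and \eqref{eq:dtvlim} follows.

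The main obstacle I anticipate is the justification that $\mathcal{N}_i$ is genuinely continuous, which is needed to conclude continuity of the integrand; this rests on the continuity of $\psi_{m_j}$ propagating through the recursive definition \eqref{eq:NormRec}–\eqref{eq:bTilde}, where at each stage one takes an expectation of a continuous function of a Gaussian-perturbed argument and must invoke boundedness together with dominated convergence to move the limit under the expectation. Once this continuity is in hand, the remaining steps are routine: the almost-sure convergence of the arguments is elementary, and the uniform bound $\psi_{m_i}\mathcal{N}_i \in [0,1]$ supplies the dominating function for free.
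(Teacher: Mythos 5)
Your proposal is correct and follows essentially the same route as the paper: apply the preceding lemma (\ref{eq:dtvlem}) with a bounded test function, cancel the leading term $\mathbb{E}[f(\xi)]$, pull absolute values inside the sum and expectations to get (\ref{eq:dtvbound}), and then obtain (\ref{eq:dtvlim}) from the continuity of $\psi_{m_i}\mathcal{N}_i$, the pointwise convergence of the Gaussian arguments, and dominated convergence with the $[0,1]$ bound from Theorem \ref{lem:NBetween0And1}. The only caveat is that, under the paper's definition of $d_{TV}$ as a supremum over Borel sets, your functional characterization $\sup_{\|f\|_\infty \leq 1}\left|\mathbb{E}[f(\zeta_1)]-\mathbb{E}[f(\zeta_2)]\right|$ actually equals $2\,d_{TV}(\zeta_1,\zeta_2)$ rather than $d_{TV}(\zeta_1,\zeta_2)$; this is harmless here, since indicators $1_A$ are among your test functions (this is exactly the choice the paper makes) and the inequality you derive still implies (\ref{eq:dtvbound}).
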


\begin{proof}
Fix a Borel measurable set $A \subset \mathbb{R}$. Applying (\ref{eq:dtvlem}) with $f(x) = 1_A(x)$, gives
\begin{eqnarray*}
\lefteqn{\left|\mathbb{E}\left[1_A\left(\frac{\sqrt{N}}{\sigma} \left(\widehat{\mu}_N - \mu\right)\right) - 1_A(\xi)\right]\right|}\\
&=& \left| \sum_{i=1}^L\mathbb{E}\left[1_A(\xi) \left(\left(\psi_{m_i} \mathcal{N}_i\right)\left(\mu m_i + \sigma \sqrt{m_i} \xi\right) - \left(\psi_{m_i} \mathcal{N}_i\right)\left(\mu m_i + \sigma \sqrt{\frac{m_i(n - m_i)}{n}} \xi + \sigma \frac{m_i}{\sqrt{n}} \eta\right)\right) \right]\right|\\
&\leq& \sum_{i=1}^L \mathbb{E}\left[ \left|\left(\psi_{m_i} \mathcal{N}_i\right)\left(\mu m_i + \sigma \sqrt{m_i} \xi\right) - \left(\psi_{m_i} \mathcal{N}_i\right)\left(\mu m_i + \sigma \sqrt{\frac{m_i(n - m_i)}{n}} \xi + \sigma \frac{m_i}{\sqrt{n}} \eta\right)\right| \right],
\end{eqnarray*}
entailing (\ref{eq:dtvbound}).

Furthermore, suppose that the $\psi_{m_i}$ are continuous. Using Theorem \ref{thm:RecursiveDescriptionNormalTransform}, it is easily checked that the $\mathcal{N}_i$ are also continuous. Hence, for fixed $L$ and $m_1,\ldots,m_L$, $\left(\psi_{m_i} \mathcal{N}_i\right)\left(\mu m_i + \sigma \sqrt{\frac{m_i(n - m_i)}{n}} \xi + \sigma \frac{m_i}{\sqrt{n}} \eta\right)_n$ tends to $\left(\psi_{m_i} \mathcal{N}_i\right)\left(\mu m_i + \sigma \sqrt{m_i} \xi\right)$ pointwise. Thus, each $\psi_{m_i} \mathcal{N}_i$ taking values between $0$ and $1$ by Theorem \ref{lem:NBetween0And1}, the Lebesgue dominated convergence theorem allows us to derive (\ref{eq:dtvlim}) from (\ref{eq:dtvbound}).
\end{proof}

From Theorem \ref{thm:dtvbd}, we easily derive the following conclusion for naive confidence intervals based on $\widehat{\mu}_N$.

\begin{thm}\label{thm:CI}
Let $\Phi$ be the standard normal cumulative distribution function, and $\xi$ and $\eta$ independent random variables with law $N(0,1)$. Then, for $x \in \mathbb{R}$,
\begin{eqnarray}
\lefteqn{\left|2 \Phi (x) - 1 - \mathbb{P}\left[\widehat{\mu}_{N} - \frac{\sigma}{\sqrt{N}} x \leq \mu \leq \widehat{\mu}_{N} + \frac{\sigma}{\sqrt{N}} x \right]\right|}\label{eq:CIBD}\\
&\leq& \sum_{i=1}^L \mathbb{E}\left[ \left|\left(\psi_{m_i} \mathcal{N}_i\right)\left(\mu m_i + \sigma \sqrt{m_i} \xi\right) - \left(\psi_{m_i} \mathcal{N}_i\right)\left(\mu m_i + \sigma \sqrt{\frac{m_i(n - m_i)}{n}} \xi + \sigma \frac{m_i}{\sqrt{n}} \eta\right)\right| \right],\nonumber
\end{eqnarray}
which, for fixed $L$ and $m_1,\ldots,m_L$, tends to $0$ if $n \to \infty$, provided that the $\psi_{m_i}$ are continuous.
\end{thm}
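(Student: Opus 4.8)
The plan is to recast the coverage event as a membership statement for the standardized estimator $\frac{\sqrt{N}}{\sigma}(\widehat{\mu}_N - \mu)$ and then to read the inequality directly off Theorem~\ref{thm:dtvbd}. Throughout I take $x \geq 0$, which is the only case relevant for a genuine two-sided interval.

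First I would rewrite the event defining the interval. Subtracting $\mu$ from all three terms and multiplying by the positive factor $\frac{\sqrt{N}}{\sigma}$ shows that
$$\left\{\widehat{\mu}_{N} - \frac{\sigma}{\sqrt{N}} x \leq \mu \leq \widehat{\mu}_{N} + \frac{\sigma}{\sqrt{N}} x\right\} = \left\{\frac{\sqrt{N}}{\sigma}\left(\widehat{\mu}_N - \mu\right) \in [-x,x]\right\},$$
so that the coverage probability in (\ref{eq:CIBD}) equals $\mathbb{P}\left[\frac{\sqrt{N}}{\sigma}(\widehat{\mu}_N - \mu) \in [-x,x]\right]$. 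Next I would identify the nominal level with the standard normal mass of the same set: for $\xi$ with law $N(0,1)$,
$$\mathbb{P}[\xi \in [-x,x]] = \Phi(x) - \Phi(-x) = 2\Phi(x) - 1.$$
Writing $A = [-x,x]$, the quantity inside the absolute value on the left of (\ref{eq:CIBD}) is therefore exactly $\mathbb{P}[\xi \in A] - \mathbb{P}\left[\frac{\sqrt{N}}{\sigma}(\widehat{\mu}_N - \mu) \in A\right]$.

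Finally, since the total variation distance is by definition the supremum of such differences over all Borel sets, I obtain
$$\left|2\Phi(x) - 1 - \mathbb{P}\left[\frac{\sqrt{N}}{\sigma}(\widehat{\mu}_N - \mu) \in A\right]\right| \leq d_{TV}\left(N(0,1), \frac{\sqrt{N}}{\sigma}(\widehat{\mu}_N - \mu)\right).$$
Theorem~\ref{thm:dtvbd} then supplies both the explicit right-hand side of (\ref{eq:CIBD}) and, under continuity of the $\psi_{m_i}$, the convergence of that bound to $0$ as $n \to \infty$ for fixed $L$ and $m_1,\ldots,m_L$.

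I do not anticipate any real obstacle here: the argument is merely a change of description of the coverage event followed by a single appeal to the already established total variation estimate. The only delicate point is the bookkeeping with signs, namely that the reduction relies on $x \geq 0$ so that $[-x,x]$ is a genuine interval whose standard normal probability is $2\Phi(x)-1$; for $x < 0$ the two-sided interval degenerates and the statement should be read in that light.
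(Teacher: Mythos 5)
Your proposal is correct and is essentially the paper's own proof: the paper likewise obtains (\ref{eq:CIBD}) by applying Theorem \ref{thm:dtvbd} to the Borel set $A = [-x,x]$, with the identification $\mathbb{P}[\xi \in [-x,x]] = 2\Phi(x)-1$ left implicit. Your closing remark about $x \geq 0$ is a fair observation (the identity $\mathbb{P}[\xi \in [-x,x]] = 2\Phi(x)-1$ indeed requires it, a point the paper's one-line proof does not address), and otherwise there is nothing to add.
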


\begin{proof}
This follows from Theorem \ref{thm:dtvbd} by considering the Borel set $A = [-x,x]$.
\end{proof}

We conclude this section with the following remarks:
\begin{enumerate}
	\item Notice the surprising fact that the upper bound in (\ref{eq:dtvbound}) and (\ref{eq:CIBD}) vanishes if $n \to \infty$ for all fixed choices of $L$ and $m_1, \ldots, m_L$. In particular, contrary to the upper bounds in (\ref{eq:BiasBound}) for the bias and in (\ref{eq:BoundMSE}) for the MSE, in studies with large maximal length $n$, the upper bound in (\ref{eq:dtvbound}) and (\ref{eq:CIBD}) always vanishes, even if the $m_i$ are small, i.e. if the interim analyses are performed early.
	
	\item  The bound (\ref{eq:CIBD}) justifies the use of naive confidence intervals based on $\widehat{\mu}_N$, provided that the $m_i$ are kept fixed, the $\psi_{m_i}$ are continuous, and the maximal length $n$ of the trial is large enough. We wish to point out that our conclusion for confidence intervals is less powerful than our statements for bias and MSE in the previous section. Indeed, we have not provided a rate of convergence. A deeper study of confidence intervals in a group sequential trial setting turns out to be much harder, and will be treated in subsequent work.
\end{enumerate}

\section{Connections with likelihood theory}\label{sec:LHT}

In this section, we will connect the theory developed in the previous sections to marginal and conditional maximum likelihood estimation after a group sequential trial. As a starting point for likelihood theoretic arguments, we assume that the distribution of the $X_i$ comes from the parametric family $\{N(\theta,\sigma^2) \mid \theta \in \mathbb{R}\}$.

Since each of the $X_i$ has distribution $N(\theta,\sigma^2)$, we observe that the joint density of the $X_i$ gathered in the trial is given by 
$$f_{X_1,\ldots,X_N}(\theta, x_1,\ldots,x_N) = \frac{1}{\sigma^{N}}\prod_{j=1}^{N} \phi\left(\frac{x_j - \theta}{\sigma}\right).$$
Therefore, classical likelihood theory kicks in and we conclude that the marginal maximum likelihood estimator (MLE) for $\mu$ is the ordinary sample mean $\widehat{\mu}_N$. In the previous sections, we have provided evidence of the fact that this estimator performs well in terms of bias and MSE if the first interim analysis is performed not too early, and in terms of asymptotic normality and confidence intervals if the maximal length of the study is large enough.

We now turn to conditional maximum likelihood estimation (CMLE) after a group sequential trial. More precisely, we will link the CMLE for $\mu$, conditioned on $N$, to the sample average, from which it will follow that it coincides with the `conditional bias reduction estimate', studied in \cite{FDL00}, section 3.3. We will use the following lemma, which lies at the heart of Stein's method (\cite{CGS11}).

\begin{lem}
Let $\eta$ be a random variable with law $N(0,1)$, $g : \mathbb{R} \rightarrow \mathbb{R}$ a Borel measurable map with $\mathbb{E}[\left|g(\eta)\right|] < \infty$, $A \in \mathbb{R}$, and $B \in \mathbb{R}_0$. Then the map 
$$\theta \mapsto \mathbb{E}[g(A\theta +B \eta)], \phantom{1} \theta \in \mathbb{R},$$ 
is smooth. Furthermore,
\begin{equation}
\frac{d}{d\theta} \mathbb{E}[g(A\theta + B\eta)] = \frac{A}{B}\mathbb{E}[\eta g(A \theta + B\eta)]\label{eq:Stein1}
\end{equation}
and 
\begin{equation}
\frac{d}{d\theta} \mathbb{E}[\eta g(A\theta + B\eta)] = \frac{A}{B}\mathbb{E}[(\eta^2 - 1) g(A \theta + B \eta)].\label{eq:Stein2}
\end{equation}
\end{lem}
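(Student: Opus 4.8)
The plan is to move the entire $\theta$-dependence out of the (merely measurable) map $g$ and into a smooth Gaussian density, so that no differentiability of $g$ is ever needed. Starting from $\mathbb{E}[g(A\theta+B\eta)] = \int_{-\infty}^\infty g(A\theta+By)\,\phi(y)\,dy$ and performing the change of variables $u = A\theta + By$ (which contributes the factor $1/|B|$ regardless of the sign of $B$), I would recast the quantity as
\[
\mathbb{E}[g(A\theta+B\eta)] = \int_{-\infty}^\infty g(u)\,p(u,\theta)\,du, \qquad p(u,\theta) = \frac{1}{|B|\sqrt{2\pi}}\exp\!\left(-\frac{(u-A\theta)^2}{2B^2}\right),
\]
that is, as the integral of $g$ against the $N(A\theta,B^2)$ density. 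Since $p$ is jointly smooth and $\theta$ now enters only through $p$, both assertions reduce to differentiating under the integral sign.

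For the first identity I would differentiate $p$ in $\theta$ via $\partial_\theta p(u,\theta) = p(u,\theta)\,\tfrac{A(u-A\theta)}{B^2}$, obtaining $\frac{d}{d\theta}\mathbb{E}[g(A\theta+B\eta)] = \int_{-\infty}^\infty g(u)\,p(u,\theta)\,\tfrac{A(u-A\theta)}{B^2}\,du$, and then reverse the substitution. Under $u = A\theta+By$ one has $u - A\theta = By$, so the weight $\tfrac{A(u-A\theta)}{B^2}$ becomes exactly $\tfrac{A}{B}\,y$ while $p(u,\theta)\,du$ becomes $\phi(y)\,dy$; this reproduces $\tfrac{A}{B}\mathbb{E}[\eta\,g(A\theta+B\eta)]$, which is (\ref{eq:Stein1}).

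For the second identity I would instead start from $\mathbb{E}[\eta\,g(A\theta+B\eta)] = \int_{-\infty}^\infty \tfrac{u-A\theta}{B}\,g(u)\,p(u,\theta)\,du$, obtained by the same substitution (note that here $y = (u-A\theta)/B$, not $(u-A\theta)/|B|$). Differentiating the product $\tfrac{u-A\theta}{B}\,p(u,\theta)$ in $\theta$ gives $\tfrac{A}{B}\,p(u,\theta)\big[\tfrac{(u-A\theta)^2}{B^2}-1\big]$, and reversing the substitution turns $\tfrac{(u-A\theta)^2}{B^2}$ into $y^2$, yielding $\tfrac{A}{B}\mathbb{E}[(\eta^2-1)g(A\theta+B\eta)]$, which is (\ref{eq:Stein2}). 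Smoothness then follows by iteration: each further $\theta$-derivative of $p$ pulls down one more Hermite-type polynomial factor in $(u-A\theta)$ times $p$, so the map $\theta \mapsto \mathbb{E}[g(A\theta+B\eta)]$ is $C^\infty$.

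The point requiring genuine care, and the main obstacle, is the justification of the interchange of $\frac{d}{d\theta}$ with $\int$. I would fix a compact $\theta$-interval and bound the $\theta$-derivative of the integrand by $|g(u)|$ times a polynomial-in-$u$ multiple of a Gaussian in $u$; since $A\theta$ ranges over a bounded set and $\phi$ decays super-polynomially, this bound is integrable in $u$ and uniform in $\theta$, so the dominated convergence theorem legitimizes the interchange and, upon iteration, the smoothness claim. In every application in this paper $g = \psi_{m_i}\mathcal{N}_i$ is bounded (valued in $[0,1]$ by Theorem \ref{lem:NBetween0And1}), which makes the domination immediate.
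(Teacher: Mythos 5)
Your proposal is correct and takes essentially the same route as the paper's own proof: both arguments transfer the $\theta$-dependence from the merely measurable $g$ into the smooth Gaussian density via the change of variables $u = A\theta + B y$ (so that the integrand is $g$ against the $N(A\theta,B^2)$ density), differentiate under the integral sign using $\phi'(x) = -x\phi(x)$, and then substitute back to recover the expectation form, with the second identity handled by the same device (the paper simply calls it ``analogous''). Your additional care --- the $1/\left|B\right|$ for negative $B$, the dominated-convergence justification of the interchange (immediate in the paper's applications since $g = \psi_{m_i}\mathcal{N}_i$ is $[0,1]$-valued), and the iteration argument for smoothness --- goes beyond the paper's proof, which performs these steps without comment.
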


\begin{proof}
We have 
$$\frac{d}{d\theta} \mathbb{E}[g(A \theta + B \eta)] = \frac{d}{d\theta} \int_{-\infty}^\infty g(A \theta + B u) \phi(u) du,$$
which, performing the change of variables $t = A \theta + B u$, 
\begin{eqnarray*}
=  \frac{1}{B} \frac{d}{d\theta} \int_{-\infty}^\infty g(t) \phi\left(\frac{t - A \theta}{B}\right) dt &=&  \frac{1}{B} \int_{-\infty}^\infty g(t) \frac{d}{d\theta} \phi\left(\frac{t - A \theta}{B}\right) dt\\ &=& \frac{A}{B^2} \int_{-\infty}^\infty g(t) \left(\frac{t - A \theta}{B} \right) \phi\left(\frac{t - A \theta}{B}\right) dt,
\end{eqnarray*}
which, performing the change of variables $v = \frac{t - A \theta}{B}$,
$$= \frac{A}{B} \int_{- \infty}^\infty v g(A \theta + B v) \phi(v) dv = \frac{A}{B}\mathbb{E}[\eta g(A \theta + B\eta)].$$
This proves (\ref{eq:Stein1}). The proof of (\ref{eq:Stein2}) is analogous.
\end{proof}

Now let $\xi$ and $\eta$ be independent random variables with law $N(0,1)$, and suppose that, for each $\theta$, $N$ can take each of the values $m_1,\ldots,m_L,n$ with a strictly positive probability, i.e. the expressions (\ref{eq:PNIsmi}) and (\ref{eq:PNIsn}) are nonzero if $\mu$ is replaced by $\theta$. Then, using Bayes' Theorem, the fact that the $X_i$ have law $N(\theta,\sigma^2)$, and plugging in (c) in section 1, and (\ref{eq:PNIsmi}) and (\ref{eq:PNIsn}) with $\mu$ replaced by $\theta$, it holds for the conditional likelihood of the $X_i$ given $N$ that, for $i \in \{1,\ldots,L\}$,
\begin{eqnarray}
\lefteqn{f_{X_1,\ldots,X_{m_i} \mid N} (\theta,x_1,\ldots,x_{m_i} \mid m_i)}\\
&=& \frac{1}{\sigma^{m_i}}\prod_{j=1}^{m_i} \phi\left(\frac{x_j - \theta}{\sigma}\right) \frac{\psi_{m_i}(\sum_{k=1}^{m_i} x_k) \prod_{j=1}^{i-1} \left[1 - \psi_{m_j}\left(\sum_{k=1}^{m_j} x_k\right)\right]}{\mathbb{E}\left[\left(\psi_{m_i} \mathcal{N}_i\right)(\theta m_i + \sigma \sqrt{m_i} \xi)\right]},\nonumber
\end{eqnarray}
and
\begin{eqnarray}
\lefteqn{f_{X_1,\ldots,X_{n} \mid N} (\theta,x_1,\ldots,x_{n} \mid n)}\\
&=& \frac{1}{\sigma^{n}}\prod_{j=1}^{n} \phi\left(\frac{x_j - \theta}{\sigma}\right)\frac{1 - \sum_{i=1}^L \psi_{m_i}(\sum_{k=1}^{m_i} x_k) \prod_{j=1}^{i-1} \left[1 - \psi_{m_j}\left(\sum_{k=1}^{m_j} x_k\right)\right]}{1 - \sum_{i=1}^L \mathbb{E}\left[\left(\psi_{m_i} \mathcal{N}_i\right)\left(\theta m_i + \sigma \sqrt{\frac{m_i(n - m_i)}{n}} \xi + \sigma \frac{m_i}{\sqrt{n}} \eta\right)\right]}.\nonumber
\end{eqnarray}
In particular, up to an additive constant not depending on $\theta$, the conditional log-likelihood is given by, for $i \in \{1,\ldots,L\}$,
\begin{equation}
\mathcal{L}(\theta,x_1,\ldots,x_{m_i} \mid m_i) = - \frac{1}{2 \sigma^2} \sum_{j=1}^{m_i} \left(x_j - \theta\right)^2 - \log \mathbb{E}\left[\left(\psi_{m_i} \mathcal{N}_i\right)(\theta m_i + \sigma \sqrt{m_i} \xi)\right],\label{eq:LL1}
\end{equation}
and
\begin{eqnarray}
\lefteqn{\mathcal{L}(\theta,x_1,\ldots,x_{n} \mid n)}\label{eq:LL2}\\
&=& - \frac{1}{2 \sigma^2} \sum_{j=1}^{n} \left(x_j - \theta\right)^2 - \log\left(1 - \sum_{i=1}^L \mathbb{E}\left[\left(\psi_{m_i} \mathcal{N}_i\right)\left(\theta m_i + \sigma \sqrt{\frac{m_i(n - m_i)}{n}} \xi + \sigma \frac{m_i}{\sqrt{n}} \eta\right)\right]\right).\nonumber
\end{eqnarray}
Applying (\ref{eq:Stein1}) with $g = \psi_{m_i} \mathcal{N}_i$, $A = m_i$, and $B = \sigma \sqrt{m_i}$, shows that the partial derivative of (\ref{eq:LL1}) with respect to $\theta$ is 
\begin{equation}
\frac{\partial}{\partial \theta}\mathcal{L}(\theta,x_1,\ldots,x_{m_i} \mid m_i)
= \frac{1}{\sigma^2} \sum_{j=1}^{m_i} x_j - \frac{m_i}{\sigma^2} \theta - \frac{\sqrt{m_i}}{\sigma} \frac{\mathbb{E}[\xi \left(\psi_{m_i} \mathcal{N}_i\right)(\theta m_i + \sigma \sqrt{m_i} \xi) ]}{\mathbb{E}\left[\left(\psi_{m_i} \mathcal{N}_i\right)(\theta m_i + \sigma \sqrt{m_i} \xi)\right]}, \label{eq:LL1D}
\end{equation}
and, applying (\ref{eq:Stein1}) with $g(\theta) = \mathbb{E}\left[\left(\psi_{m_i} \mathcal{N}_i\right)\left(\theta + \sigma \sqrt{\frac{m_i(n - m_i)}{n}} \xi\right) \right]$, $A = m_i$, and $B = \sigma \frac{m_i}{\sqrt{n}}$, and using independence of $\xi$ and $\eta$, reveals that the partial derivative of (\ref{eq:LL2}) with respect to $\theta$ is
\begin{eqnarray}
\lefteqn{\frac{\partial}{\partial \theta}\mathcal{L}(\theta,x_1,\ldots,x_{n} \mid n)}\label{eq:LL2D}\\
&=& \frac{1}{\sigma^2} \sum_{j=1}^n x_j - \frac{n}{\sigma^2} \theta + \frac{\sqrt{n}}{\sigma}\frac{\sum_{i=1}^L \mathbb{E}\left[\eta \left(\psi_{m_i} \mathcal{N}_i\right)\left(\theta m_i + \sigma \sqrt{\frac{m_i(n - m_i)}{n}} \xi + \sigma \frac{m_i}{\sqrt{n}} \eta\right)\right]}{1 - \sum_{i=1}^L \mathbb{E}\left[\left(\psi_{m_i} \mathcal{N}_i\right)\left(\theta m_i + \sigma \sqrt{\frac{m_i(n - m_i)}{n}} \xi + \sigma \frac{m_i}{\sqrt{n}} \eta\right)\right]}.\nonumber
\end{eqnarray}
Furthermore, using (\ref{eq:Fun1}) with $h(x) = x - \theta$ and (\ref{eq:PNIsmi}) with $\mu$ replaced by $\theta$, (\ref{eq:LL1D}) leads to
\begin{eqnarray}
\lefteqn{\frac{\partial}{\partial \theta}\mathcal{L}(\theta,X_1,\ldots,X_{m_i} \mid m_i)}\label{eq:LL1DS}\\
&=& \frac{m_i}{\sigma^2} \left(\widehat{\mu}_{m_i}  - \theta - \mathbb{E}_\theta\left[\left(\widehat{\mu}_N -\theta\right) \mid N = m_i\right]\right)\nonumber\\
&=& \frac{m_i}{\sigma^2} \left(\widehat{\mu}_{m_i} - \mathbb{E}_\theta\left[\widehat{\mu}_N \mid N = m_i\right]\right),\nonumber
\end{eqnarray}
and, using (\ref{eq:Fun2}) with $h(x) = x - \theta$ and (\ref{eq:PNIsn}) with $\mu$ replaced by $\theta$, (\ref{eq:LL2D}) gives
\begin{eqnarray}
\lefteqn{\frac{\partial}{\partial \theta}\mathcal{L}(\theta,X_1,\ldots,X_{n} \mid n)}\label{eq:LL2DS}\\
&=& \frac{n}{\sigma^2} \left(\widehat{\mu}_{n}  - \theta - \mathbb{E}_\theta\left[\left(\widehat{\mu}_N - \theta\right) \mid N = n\right]\right)\nonumber\\
&=& \frac{n}{\sigma^2} \left(\widehat{\mu}_{n}  - \mathbb{E}_\theta\left[\widehat{\mu}_N  \mid N = n\right]\right).\nonumber
\end{eqnarray}
Also, applying (\ref{eq:Stein2}) with $g = \psi_{m_i} \mathcal{N}_i$, $A = m_i$, and $B = \sigma \sqrt{m_i}$, shows that the partial derivative of (\ref{eq:LL1D}) with respect to $\theta$ is
\begin{eqnarray}
\lefteqn{\frac{\partial^2}{\partial \theta^2}\mathcal{L}(\theta,x_1,\ldots,x_{m_i} \mid m_i)}\label{eq:LLDD1}\\
&=& - \frac{m_i}{\sigma^2}\left( \frac{\mathbb{E}\left[\xi^2 \left(\psi_{m_i} \mathcal{N}_i\right)(\theta m_i + \sigma \sqrt{m_i} \xi) \right]}{\mathbb{E}[\left(\psi_{m_i} \mathcal{N}_i\right)(\theta m_i + \sigma \sqrt{m_i} \xi) ]} - \left(\frac{\mathbb{E}\left[\xi \left(\psi_{m_i} \mathcal{N}_i\right)(\theta m_i + \sigma \sqrt{m_i} \xi)\right]}{\mathbb{E}\left[\left(\psi_{m_i} \mathcal{N}_i\right)(\theta m_i + \sigma \sqrt{m_i} \xi)\right]}\right)^2\right),\nonumber
\end{eqnarray} 
and, applying (\ref{eq:Stein2}) with $g(\theta) = \mathbb{E}\left[\left(\psi_{m_i} \mathcal{N}_i\right)\left(\theta + \sigma \sqrt{\frac{m_i(n - m_i)}{n}} \xi\right) \right]$, $A = m_i$, and $B = \sigma \frac{m_i}{\sqrt{n}}$, and using independence of $\xi$ and $\eta$, shows that the partial derivative of (\ref{eq:LL2D}) with respect to $\theta$ is
\begin{eqnarray}
\lefteqn{\frac{\partial^2}{\partial \theta^2}\mathcal{L}(\theta,x_1,\ldots,x_{n} \mid n)}\label{eq:LLDD2}\\
&=& \frac{n}{\sigma^2}\left(\frac{ \sum_{i=1}^L \mathbb{E}\left[\eta^2 \left(\psi_{m_i} \mathcal{N}_i\right)\left(\theta m_i + \sigma \sqrt{\frac{m_i(n - m_i)}{n}} \xi + \sigma \frac{m_i}{\sqrt{n}} \eta\right) \right]}{1 -  \sum_{i=1}^L \mathbb{E}\left[\left(\psi_{m_i} \mathcal{N}_i\right)\left(\theta m_i + \sigma \sqrt{\frac{m_i(n - m_i)}{n}} \xi + \sigma \frac{m_i}{\sqrt{n}} \eta\right)\right]}\right)\nonumber\\
&& - \frac{n}{\sigma^2} \left(\frac{\sum_{i=1}^L\mathbb{E}\left[\eta \left(\psi_{m_i} \mathcal{N}_i\right)\left(\theta m_i + \sigma \sqrt{\frac{m_i(n - m_i)}{n}} \xi + \sigma \frac{m_i}{\sqrt{n}} \eta\right) \right]}{1 - \sum_{i=1}^L\mathbb{E}\left[\left(\psi_{m_i} \mathcal{N}_i\right)\left(\theta m_i + \sigma \sqrt{\frac{m_i(n - m_i)}{n}} \xi + \sigma \frac{m_i}{\sqrt{n}} \eta\right) \right]}\right)^2.\nonumber
\end{eqnarray}
Furthermore, using (\ref{eq:Fun1}), with respectively $h_1(x) = (x - \theta)^2$ and $h_2(x) = x - \theta$, and (\ref{eq:PNIsmi}), each time with $\mu$ replaced by $\theta$, transforms (\ref{eq:LLDD1}) into
\begin{eqnarray}
\lefteqn{\frac{\partial^2}{\partial \theta^2}\mathcal{L}(\theta,X_1,\ldots,X_{m_i} \mid m_i)}\label{eq:LLDD1S}\\
&=& -\frac{m_i^2}{\sigma^4} \left(\mathbb{E}_\theta\left[\left(\widehat{\mu}_N - \theta\right)^2 \mid N = m_i\right] - \left(\mathbb{E}_\theta\left[\left(\widehat{\mu}_N - \theta\right) \mid N = m_i\right]\right)^2\right)\nonumber\\
&=& - \frac{m_i^2}{\sigma^4} {\text{\upshape Var}}_\theta[\widehat{\mu}_N \mid N = m_i],\nonumber
\end{eqnarray}
and, using (\ref{eq:Fun2}), with respectively $h_1(x) = (x - \theta)^2$ and $h_2(x) = x - \theta$, and (\ref{eq:PNIsn}), each time with $\mu$ replaced by $\theta$, shows that (\ref{eq:LLDD2}) gives
\begin{eqnarray}
\lefteqn{\frac{\partial^2}{\partial \theta^2}\mathcal{L}(\theta,X_1,\ldots,X_{n} \mid n)}\label{eq:LLDD2S}\\
&=& -\frac{n^2}{\sigma^4} \left(\mathbb{E}_\theta\left[\left(\widehat{\mu}_N - \theta\right)^2 \mid N = n\right] - \left(\mathbb{E}_\theta\left[\left(\widehat{\mu}_N - \theta\right) \mid N = n\right]\right)^2\right)\nonumber\\
&=& - \frac{n^2}{\sigma^4} {\text{\upshape Var}}_\theta\left[\widehat{\mu}_N \mid N = n\right].\nonumber
\end{eqnarray}

The relations (\ref{eq:LL1DS}), (\ref{eq:LL2DS}), (\ref{eq:LLDD1S}), and (\ref{eq:LLDD2S}) are summarized by stating that the conditional log-likelihood satisfies the equations
 \begin{equation}
\frac{\partial}{\partial \theta}\mathcal{L}\left(\theta,X_1,\ldots,X_{N} \mid N\right) = \frac{N}{\sigma^2} \left(\widehat{\mu}_{N} -  \mathbb{E}_\theta\left[\widehat{\mu}_N\mid N \right]\right)\label{eq:LLDS}
\end{equation}
and
\begin{equation}
\frac{\partial^2}{\partial \theta^2}\mathcal{L}(\theta,X_1,\ldots,X_{N} \mid N) = - \frac{N^2}{\sigma^4} {\text{\upshape Var}}_\theta\left[\widehat{\mu}_N \mid N\right].\label{eq:LLDDS}
\end{equation}
We conclude that the maximum likelihood estimator conditioned on $N$, denoted by $\widehat{\mu}_{c,N}$, satisfies the relation
\begin{equation}
\widehat{\mu}_N = \mathbb{E}_{\widehat{\mu}_{c,N}}[\widehat{\mu}_N \mid N].\label{eq:MUC}
\end{equation}

It follows from (\ref{eq:MUC}) that $\widehat{\mu}_{c,N}$ coincides with the `conditional bias reduction estimate', studied in \cite{FDL00}, section 3.3. In section 3.4 of that paper, one provides empirical evidence of the fact that $\widehat{\mu}_{c,N}$ outperforms $\widehat{\mu}_N$ in trials that are stopped early. Combining this information with our results obtained in the previous sections, it seems plausible to recommend $\widehat{\mu}_N$ if the first interim analysis is performed not too early, and $\widehat{\mu}_{c,N}$ otherwise.

\section{Simulations}\label{sec:Simulations}

To illustrate our findings, a simulation study was conducted to investigate the speed of convergence. 
Two different cases were considered: continuous normal, $X_1, X_2, \ldots, X_n$ i.i.d.\ $\sim N(\mu, 1)$, and discrete Bernoulli, $X_1, X_2, \ldots, X_n$ i.i.d.\ $\sim B(\pi)$, with different choices for the parameter values. For each case, the following design choices were made: 1000 random samples were generated, each of size $n$. To every sample, several stopping conditions were applied: (1) no stopping; (2) one stopping occasion ($m_1$) using the K criterion; (3)  1 stopping occasion ($m_1$) using the probit criterion; (4) 3 stopping occasions ($m_1$, $m_2$, or $m_3$) using the K criterion; and (5) 3 stopping occasions ($m_1$, $m_2$, or $m_3$) using the probit criterion. All stopping criteria were applied to the sample mean $K_{m_i}=\frac{1}{m_i}\sum_{i=1}^{m_i}{X_i}$ in the following way. For the K criterion, stop at $m_i$, if $K_{m_i} < 0$. For the probit criterion, first, the probability $\Phi\left(\alpha + \beta K_{m_i} \right)$ was calculated and a random uniform vector $U \sim \mbox{Uniform~(0,1)}$ was generated; if $U \leq \Phi\left(\alpha + \beta K_{m_i} \right)$, then stop at $m_i$, otherwise continue to $m_{i+1}$ (or $n$). All calculations were performed with the R statistical software (R version 3.3.3).

The parameter choice was made as follows: total sample size $n=400$, for probit $\alpha=0$ (kept fixed) and $\beta=-2,-1,0,1,2$. For 1 stopping occasion, $m_1=200$, for 3 stopping occasions, different scenarios were considered: (a) 3 `late' stopping occasions with $m_1=100$, $m_2=200$, $m_3=300$; (b) 3 `early' stopping occasions with $m_1=50$, $m_2=100$, $m_3=150$ and $m_1=25$, $m_2=50$, $m_3=75$; (c) 3 `extremely early' stopping occasions with $m_1=10$, $m_2=20$, $m_3=30$; $m_1=5$, $m_2=10$, $m_3=15$ and $m_1=2$, $m_2=4$, $m_3=6$. For the distribution parameters, the following choice was made. For the normal case, $\mu~=-2,-1,0,1,2$ and the standard deviation is kept fixed  $\sigma=1$; for the Bernoulli case, $\pi=0.1,0.3,0.5,0.7,0.9$ and also some `extreme' values of $\pi$ such as $0.001,0.01$.

For each generated sample, the following statistics were calculated: bias as $\frac{1}{N}\sum_{i=1}^{N}{(\widehat{\theta}-\theta)}$, relative bias as $\frac{\frac{1}{N}\sum_{i=1}^{N}{(\widehat{\theta}-\theta)}}{\theta}$, mean square error (MSE) as $\frac{1}{N}\sum_{i=1}^{N}{(\widehat{\theta}-\theta)^2}$, 95$\%$ confidence interval as an average 95$\%$ confidence interval over all 1000 generated samples, true coverage probability, and average sample size for all 1000 generated samples. All results are summarized in the accompanying supplementary material.

We focused in this study primarily on the behavior of the bias and the MSE. The  simulations conducted confirm the theoretical results for both generic stopping rules in the case of normal target distributions: bias of the sample mean converges to zero with speed $\frac{1}{\sqrt{m_1}}$ and the MSE of the sample mean converges to zero with speed $\frac{1}{m_1}$. In addition, we examined the behavior of the 95$\%$ confidence interval and the finite sample size, and we noted that the coverage probabilities in some cases are not $95 \%$. This is compatible with our results, taking into account the second remark accompanying Theorem \ref{thm:CI}. A deeper study of confidence intervals turns out to be much harder, and will be treated in subsequent work.
 
\section{Concluding remarks}\label{sec:ConRem}

In this paper, we have studied the theory of estimation after a group sequential trial with independent and identically distributed normal outcomes $X_1,X_2,\ldots$ with mean $\mu$ and variance $\sigma^2$. We have denoted the maximal length of the trial as $n$, the places at which the interim analyses of the sum of the outcomes are performed as $0 < m_1 < m_2 < \ldots < m_L < n$, and the actual length of the trial, which is a random variable, as $N$.  At each $m_i$, one decides whether the trial is stopped, i.e. $N = m_i$, or continued, i.e. $N > m_i$. We have based this decision on the generic stopping rule given by (a), (b), and (c) in Section 1, which was shown to contain many classical stopping rules from the literature. Therefore, our setting has a wide range of applicability.

The main goal of this paper is to gain an understanding of the quality of the sample mean $\widehat{\mu}_N = \frac{1}{N} K_N$, where $K_N = \sum_{i=1}^N X_i$, as an estimator for $\mu$ in the above group sequential trial setting. To this end, we have used the normal transform, defined by (\ref{eq:NormalTransform}), as an auxiliary analytic tool to establish the explicit expressions (\ref{eq:JointDensity}) and (\ref{eq:JointDensity2}) for the joint density of $N$ and $K_N$. These expressions were used to obtain formula (\ref{eq:ExL}) for the expected length of the trial, formula (\ref{eq:Bias}) for the bias, and formula (\ref{eq:MSE}) for the MSE. We have derived the upper bound (\ref{eq:BiasBound}) for the bias, which shows that, for fixed $L$, the bias vanishes as $m_1 \to \infty$ at least with rate $1/\sqrt{m_1}$, and the upper bound (\ref{eq:BoundMSE}) for the MSE, entailing that, for fixed $L$, the MSE vanishes as $m_1 \to \infty$ at least with rate $1/m_1$. Both rates were shown to be optimal. Also, for trials in which an interim analysis is performed after every $m$ observations, we have obtained the bound (\ref{eq:LogG}) for the MSE and the bound (\ref{eq:LogGB}) for the bias. Furthermore, we have obtained the upper bound (\ref{eq:dtvbound}) for the total variation distance between $N(0,1)$ and the law of the quantity $\frac{\sqrt{N}}{\sigma} \left(\widehat{\mu}_N - \mu\right)$, which, under a regularity condition, was shown to vanish if, for fixed $L$ and $m_1,\ldots,m_L$, $n \to \infty$. This has also led to the bound (\ref{eq:CIBD}), which, in some cases, justifies the use of naive confidence intervals based on $\widehat{\mu}_N$ if $n$ is large. It is quite surprising that, contrary to the upper bounds in (\ref{eq:BiasBound}) for the bias and in (\ref{eq:BoundMSE}) for the MSE, the upper bound in (\ref{eq:dtvbound}) and (\ref{eq:CIBD}) always vanishes if $n$ is large, even if the $m_i$ are small, i.e. if the interim analyses are performed early. Finally, the theory developed in this paper for the sample mean was shown to fit naturally in the broader framework of maximum likelihood estimation after a group sequential trial. More precisely, the marginal MLE coincides with $\widehat{\mu}_N$, and the conditional MLE $\widehat{\mu}_{c,N}$ satisfies equation (\ref{eq:MUC}), from which we derived that it coincides with the `conditional bias reduction estimate'. Our theoretical findings were illustrated by several simulations.

Based on the obtained results, we suggest that in many realistic cases it is safe to use the ordinary sample mean as a reliable estimator after a group sequential trial.

\section*{Acknowledgements}

We thank the anonymous reviewers for their valuable comments.

\section*{Funding}
Ben Berckmoes is post doctoral fellow at the Fund for Scientific Research of Flanders (FWO).\ \\

Financial support from the IAP research network \#P7/06 of the Belgian Government (Belgian Science Policy) is gratefully acknowledged.

\newpage

\begin{center}
On the sample mean after a group sequential trial\ \\
(Supplementary material)
\end{center}

\begin{center}
\begin{longtable}{llccccccc}
\caption{\em Simulation results for the normal case with the number of simulated i.i.d.\ draws per sample 400, the number of simulated samples 1000. Values for standard deviation and $\alpha$ for the probit rule are kept fixed: $\sigma=1$, $\alpha=0$. CL: 95\% confidence limit, Cov.Prob.: coverage probability. \label{simul_normal}}\\
\hline
\hline
   $\mu$&$\beta$&Bias&Relative Bias& MSE&Lower CL& Upper CL& Cov.Prob.&Aver.Size\\
\hline
\hline
\endfirsthead
\multicolumn{9}{c}%
{\tablename\ \thetable\ -- \it{Continued from previous page}} \\
\hline
\hline
 $\mu$&$\beta$&Bias&Relative Bias& MSE&Lower CL& Upper CL& Cov.Prob.& Aver.Size\\
\hline
\hline
\endhead
\hline \multicolumn{9}{r}{\it{Continued on next page}} \\
\endfoot
\hline
\endlastfoot

\multicolumn{9}{c}{No Stopping} \\
\hline
-2& &~0.000040&-0.00020&0.00235&-2.09764&-1.90157&0.955&400\\
\hline
\multicolumn{9}{c}{1 Stopping, $m=200$, K criterion}\\
\hline
-2& &~0.00148&-0.00074&0.00475&-2.13705&-1.86000&0.961&200\\
\hline
\multicolumn{9}{c}{1 Stopping, $m=200$, probit criterion}\\
\hline
-2&-2&~0.00148&-0.00074&0.00475&-2.13705&-1.86000&0.961&200\\
-2&-1&~0.00128&-0.00064&0.00471&-2.13624&-1.86121&0.962&205\\
-2&~0&~0.00077&-0.00039&0.00361&-2.11833&-1.88012&0.961&296\\
-2&~1&~0.00017&-0.00009&0.00238&-2.09861&-1.90104&0.955&396\\
-2&~2&~0.00040&-0.00020&0.00235&-2.09764&-1.90157&0.955&400\\
\hline
\multicolumn{9}{c}{3 Stoppings, $m=100,200,300$, K criterion}\\
\hline
-2& &-0.00049&~0.00024&0.00946&-2.19627&-1.8047&0.955&100\\
\hline
\multicolumn{9}{c}{3 Stoppings, $m=100,200,300$, probit-criterion}\\
\hline
-2&-2&-0.00049&~0.00024&0.00946&-2.19627&-1.80470&0.955&100\\
-2&-1&-0.00037&~0.00018&0.00943&-2.19473&-1.80601&0.954&102\\
-2&~0&~0.00141&-0.00070&0.00604&-2.15872&-1.83846&0.963&185\\
-2&~1&~0.00037&-0.00019&0.00251&-2.10101&-1.89825&0.958&387\\
-2&~2&~0.00040&-0.00020&0.00235&-2.09764&-1.90157&0.955&400\\
\hline
\multicolumn{9}{c}{No Stopping} \\
\hline
-1& &~0.0004&-0.0004&0.00235&-1.09764&-0.90157&0.955&400\\
\hline
\multicolumn{9}{c}{1 Stopping, $m=200$, K criterion}\\
-1& &~0.00148&-0.00148&0.00475&-1.13705&-0.86000&0.961&200\\
\hline
\multicolumn{9}{c}{1 Stopping, $m=200$, probit criterion}\\
\hline
-1&-2&~0.00118&-0.00118&0.00469&-1.13621&-0.86142&0.962&205\\
-1&-1&~0.00000&~0.00000&0.00433&-1.13168&-0.86831&0.966&234\\
-1&~0&~0.00077&-0.00077&0.00361&-1.11833&-0.88012&0.961&296\\
-1&~1&~0.00092&-0.00092&0.00267&-1.10346&-0.89471&0.959&369\\
-1&~2&~0.00036&-0.00036&0.00239&-1.09851&-0.90076&0.956&396\\
\hline
\multicolumn{9}{c}{3 Stoppings, $m=100,200,300$, K criterion}\\
\hline
-1& &-0.00049&~0.00049&0.00946&-1.19627&-0.8047&0.955&100\\
\hline
\multicolumn{9}{c}{3 Stoppings, $m=100,200,300$, probit criterion}\\
\hline
-1&-2&-0.00112&~0.00112&0.00934&-1.19529&-0.80695&0.955&103\\
-1&-1&-0.00244&~0.00244&0.00841&-1.18774&-0.81715&0.954&120\\
-1&~0&~0.00141&-0.00141&0.00604&-1.15872&-0.83846&0.963&185\\
-1&~1&~0.00335&-0.00335&0.00371&-1.11750&-0.87580&0.955&314\\
-1&~2&~0.00071&-0.00071&0.00253&-1.10069&-0.89789&0.958&387\\
\hline
\multicolumn{9}{c}{No Stopping} \\
\hline
~0& &~0.00040&--&0.00235&-0.09764&~0.09843&0.955&400\\
\hline
\multicolumn{9}{c}{1 Stopping, $m=200$, K criterion}\\
\hline
~0& &-0.01285&--&0.00348&-0.13064&~0.10493&0.961&303\\
\\
\multicolumn{9}{c}{1 Stopping, $m=200$, probit criterion}\\
\hline
~0&-2&-0.00094&--&0.00347&-0.11946&~0.11758&0.964&299\\
~0&-1&~0.00011&--&0.00356&-0.11885&~0.11908&0.963&297\\
~0&~0&~0.00077&--&0.00361&-0.11833&~0.11988&0.961&296\\
~0&~1&~0.00196&--&0.00368&-0.11744&~0.12136&0.962&295\\
~0&~2&~0.00294&--&0.00362&-0.11650&~0.12238&0.963&294\\
\hline
\multicolumn{9}{c}{3 Stoppings, $m=100,200,300$, K criterion}\\
\hline
~0&&-0.03133&--&0.00596&-0.18477&~0.1221&0.958&219\\
\hline
\multicolumn{9}{c}{3 Stoppings, $m=100,200,300$, probit criterion}\\
\hline
~0&-2&-0.00470&--&0.00631&-0.16439&~0.15500&0.958&186\\
~0&-1&-0.00151&--&0.00617&-0.16167&~0.15865&0.959&185\\
~0&~0&~0.00141&--&0.00604&-0.15872&~0.16154&0.963&185\\
~0&~1&~0.00408&--&0.00616&-0.15596&~0.16413&0.960&185\\
~0&~2&~0.00648&--&0.00606&-0.15330&~0.16626&0.965&185\\
\hline
\multicolumn{9}{c}{No Stopping} \\
\hline
~1& &~0.00040&~0.00040&0.00235&~0.90236&~1.09843&0.955&400\\
\hline
\multicolumn{9}{c}{1 Stopping, $m=200$, K criterion}\\
\hline
~1& &~0.00040&~0.00040&0.00235&~0.90236&~1.09843&0.955&400\\
\hline
\multicolumn{9}{c}{1 Stopping, $m=200$, probit criterion}\\
\hline
~1&-2&~0.00009&~0.00009&0.00238&~0.90130&~1.09887&0.955&396\\
~1&-1&-0.00016&-0.00016&0.00268&~0.89550&~1.10419&0.958&369\\
~1&~0&~0.00077&~0.00077&0.00361&~0.88167&~1.11988&0.961&296\\
~1&~1&~0.00128&~0.00128&0.00443&~0.86926&~1.13329&0.966&232\\
~1&~2&~0.00149&~0.00149&0.00473&~0.86388&~1.13910&0.961&204\\
\hline
\multicolumn{9}{c}{3 Stoppings, $m=100,200,300$, K criterion}\\
\hline
~1& &~0.00040&~0.00040&0.00235&~0.90236&~1.09843&0.955&400\\
\hline
\multicolumn{9}{c}{3 Stoppings, $m=100,200,300$, probit criterion}\\
\hline
~1&-2&-0.00057&-0.00057&0.00252&~0.89818&~1.10067&0.958&387\\
~1&-1&-0.00066&-0.00066&0.00360&~0.87856&~1.12011&0.963&315\\
~1&~0&~0.00141&~0.00141&0.00604&~0.84128&~1.16154&0.963&185\\
~1&~1&~0.00067&~0.00067&0.00826&~0.81548&~1.18585&0.957&120\\
~1&~2&~0.00036&~0.00036&0.00935&~0.80628&~1.19444&0.955&103\\
\hline
\multicolumn{9}{c}{No Stopping} \\
\hline
~2& &~0.00040&~0.00020&0.00235&~1.90236&~2.09843&0.955&400\\
\hline
\multicolumn{9}{c}{1 Stopping, $m=200$, K criterion}\\
\hline
~2& &~0.00040&~0.00020&0.00235&~1.90236&~2.09843&0.955&400\\
\hline
\multicolumn{9}{c}{1 Stopping, $m=200$, probit criterion}\\
\hline
~2&-2&~0.00040&~0.00020&0.00235&~1.90236&~2.09843&0.955&400\\
~2&-1&~0.00009&~0.00004&0.00238&~1.90130&~2.09887&0.955&396\\
~2&~0&~0.00077&~0.00039&0.00361&~1.88167&~2.11988&0.961&296\\
~2&~1&~0.00125&~0.00063&0.00470&~1.86373&~2.13877&0.962&205\\
~2&~2&~0.00152&~0.00076&0.00474&~1.86303&~2.14001&0.961&200\\
\hline
\multicolumn{9}{c}{3 Stoppings, $m=100,200,300$, K criterion}\\
\hline
~2& &~0.00040&~0.00020&0.00235&~1.90236&~2.09843&0.955&400\\
\hline
\multicolumn{9}{c}{3 Stoppings, $m=100,200,300$, probit criterion}\\
\hline
~2&-2&~0.00040&~0.00020&0.00235&~1.90236&~2.09843&0.955&400\\
~2&-1&-0.00031&-0.00016&0.00250&~1.89846&~2.10092&0.957&387\\
~2&~0&~0.00141&~0.00070&0.00604&~1.84128&~2.16154&0.963&185\\
~2&~1&~0.00009&~0.00005&0.00941&~1.80584&~2.19435&0.954&103\\
~2&~2&-0.00049&-0.00024&0.00946&~1.80373&~2.19530&0.955&100\\
\hline
\hline
\end{longtable}
\end{center}

\begin{center}
\begin{longtable}{llccccccc}
\caption{\em Simulation results for the normal case with the number of simulated i.i.d.\ draws per sample 400, the number of simulated samples 1000. Different scenarios for the three stopping occasions. Values for standard deviation and $\alpha$ for the probit rule are kept fixed: $\sigma=1$, $\alpha=0$. CL: 95\% confidence limit, Cov.Prob.: coverage probability. \label{simul_normal2}}\\
\hline
\hline
   $\mu$&$\beta$&Bias&Relative Bias& MSE&Lower CL& Upper CL&Cov.Prob.& Aver.Size\\
\hline
\hline
\endfirsthead
\multicolumn{9}{c}%
{\tablename\ \thetable\ -- \it{Continued from previous page}} \\
\hline
\hline
 $\mu$&$\beta$&Bias&Relative Bias& MSE&Lower CL& Upper CL&Cov.Prob.& Aver.Size\\
\hline
\hline
\endhead
\hline \multicolumn{9}{r}{\it{Continued on next page}} \\
\endfoot
\hline
\endlastfoot
\multicolumn{9}{c}{3 Stoppings, $m=50,100,150$, K criterion}\\
\hline
-2&&-0.00284&0.00142&0.02007&-2.27878&-1.7269&0.939&50\\
\hline
\multicolumn{9}{c}{3 Stoppings, $m=50,100,150$, probit criterion}\\
\hline
-2&-2&-0.00284&~0.00142&0.02007&-2.27878&-1.72690&0.939&50\\
-2&-1&-0.00278&~0.00139&0.01988&-2.27687&-1.72869&0.940&51\\
-2&~0&~0.00014&-0.00007&0.01324&-2.22014&-1.77959&0.952&118\\
-2&~1&~0.00189&-0.00095&0.00303&-2.10414&-1.89208&0.959&379\\
-2&~2&~0.00040&-0.00020&0.00235&-2.09764&-1.90157&0.955&400\\
\hline
\multicolumn{9}{c}{3 Stoppings, $m=50,100,150$, K criterion}\\
\hline

-1& &-0.00284&~0.00284&0.02007&-1.27878&-0.7269&0.939&50\\
\hline
\multicolumn{9}{c}{3 Stoppings, $m=50,100,150$, probit criterion}\\
\hline
-1&-2&-0.00332&~0.00332&0.01972&-1.27710&-0.72955&0.939&51\\
-1&-1&-0.00349&~0.00349&0.01824&-1.26446&-0.74251&0.943&62\\
-1&~0&~0.00014&-0.00014&0.01324&-1.22014&-0.77959&0.952&118\\
-1&~1&~0.00604&-0.00604&0.00658&-1.14064&-0.84728&0.952&276\\
-1&~2&~0.00356&-0.00356&0.00323&-1.10266&-0.89023&0.958&378\\
\hline
\multicolumn{9}{c}{3 Stoppings, $m=50,100,150$, K criterion}\\
\hline
~0& &-0.05706&--&0.01184&-0.26121&~0.14709&0.956&171\\
\hline
\multicolumn{9}{c}{3 Stoppings, $m=50,100,150$, probit criterion}\\
\hline
~0&-2&-0.01417&--&0.01256&-0.23159&~0.20325&0.955&122\\
~0&-1&-0.00827&--&0.01279&-0.22817&~0.21164&0.955&119\\
~0&~0&~0.00014&--&0.01324&-0.22014&~0.22041&0.952&118\\
~0&~1&~0.00587&--&0.01330&-0.21403&~0.22577&0.951&118\\
~0&~2&~0.01370&--&0.01322&-0.20658&~0.23397&0.952&118\\
\hline
\multicolumn{9}{c}{3 Stoppings, $m=50,100,150$, K criterion}\\
\hline
~1&&0.00040&0.00040&0.00235&0.90236&1.09843&0.955&400\\
\hline
\multicolumn{9}{c}{3 Stoppings, $m=50,100,150$, probit criterion}\\
\hline
~1&-2&-0.00321&-0.00321&0.00346&~0.89025&~1.10333&0.954&378\\
~1&-1&-0.00534&-0.00534&0.00659&~0.84740&~1.14192&0.960&275\\
~1&~0&~0.00014&~0.00014&0.01324&~0.77986&~1.22041&0.952&118\\
~1&~1&~0.00148&~0.00148&0.01783&~0.73998&~1.26298&0.946&62\\
~1&~2&-0.00085&-0.00085&0.01955&~0.72588&~1.27242&0.940&52\\
\hline
\multicolumn{9}{c}{3 Stoppings, $m=50,100,150$, K criterion}\\
\hline
~2& &~0.00040&~0.00020&0.00235&~1.90236&~2.09843&0.955&400\\
\hline
\multicolumn{9}{c}{3 Stoppings, $m=50,100,150$, probit criterion}\\
\hline
~2&-2&~0.00040&~0.00020&0.00235&~1.90236&~2.09843&0.955&400\\
~2&-1&-0.00106&-0.00053&0.00297&~1.89342&~2.10445&0.956&380\\
~2&~0&~0.00014&~0.00007&0.01324&~1.77986&~2.22041&0.952&118\\
~2&~1&-0.00175&-0.00087&0.01979&~1.72455&~2.27195&0.938&51\\
~2&~2&-0.00264&-0.00132&0.02002&~1.72146&~2.27327&0.939&50\\
\hline
\multicolumn{9}{c}{3 Stoppings, $m=25,50,75$, K criterion}\\
\hline
-2& &-0.00254&~0.00127&0.04125&-2.39018&-1.61489&0.938&25\\
\hline
\multicolumn{9}{c}{3 Stoppings, $m=25,50,75$, probit criterion}\\
\hline
-2&-2&-0.00254&~0.00127&0.04125&-2.39018&-1.61489&0.938&25\\
-2&-1&-0.00441&~0.00221&0.04033&-2.38904&-1.61979&0.938&26\\
-2&~0&-0.00207&~0.00104&0.02729&-2.30824&-1.69591&0.942&85\\
-2&~1&~0.00400&-0.00200&0.00377&-2.10888&-1.88312&0.958&374\\
-2&~2&~0.00040&-0.00020&0.00235&-2.09764&-1.90157&0.955&400\\
\hline
\multicolumn{9}{c}{3 Stoppings, $m=25,50,75$, K criterion}\\
\hline
-1& &-0.00254&~0.00254&0.04125&-1.39018&-0.61489&0.938&25\\
\hline
\multicolumn{9}{c}{3 Stoppings, $m=25,50,75$, probit criterion}\\
\hline
-1&-2&-0.00691&~0.00691&0.03961&-1.39115&-0.62266&0.941&26\\
-1&-1&-0.00679&~0.00679&0.03667&-1.37363&-0.63995&0.942&33\\
-1&~0&-0.00207&~0.00207&0.02729&-1.30824&-0.69591&0.942&85\\
-1&~1&~0.01088&-0.01088&0.01202&-1.17330&-0.80495&0.956&254\\
-1&~2&~0.00757&-0.00757&0.00440&-1.10696&-0.87791&0.956&372\\
\hline
\multicolumn{9}{c}{3 Stoppings, $m=25,50,75$, K criterion}\\
\hline
~0& &-0.08579&--&0.02305&-0.36318&~0.19161&0.953&143\\
\hline
\multicolumn{9}{c}{3 Stoppings, $m=25,50,75$, probit criterion}\\
\hline
~0&-2&-0.02926&--&0.02560&-0.33039&~0.27187&0.948&90\\
~0&-1&-0.01517&--&0.02600&-0.31956&~0.28922&0.947&86\\
~0&~0&-0.00207&--&0.02729&-0.30824&~0.30409&0.942&85\\
~0&~1&~0.00996&--&0.02800&-0.29806&~0.31799&0.946&82\\
~0&~2&~0.02320&--&0.02695&-0.28038&~0.32679&0.950&87\\
\hline
\multicolumn{9}{c}{3 Stoppings, $m=25,50,75$, K criterion}\\
\hline
~1& &~0.00040&~0.00040&0.00235&~0.90236&~1.09843&0.955&400\\
\hline
\multicolumn{9}{c}{3 Stoppings, $m=25,50,75$, probit criterion}\\
\hline
~1&-2&-0.01063&-0.01063&0.00645&~0.87354&~1.10521&0.950&371\\
~1&-1&-0.01469&-0.01469&0.01315&~0.80079&~1.16982&0.953&257\\
~1&~0&-0.00207&-0.00207&0.02729&~0.69176&~1.30409&0.942&85\\
~1&~1&~0.00496&~0.00496&0.03672&~0.63731&~1.37261&0.942&33\\
~1&~2&-0.00027&-0.00027&0.03978&~0.61566&~1.38380&0.940&26\\
\hline
\multicolumn{9}{c}{3 Stoppings, $m=25,50,75$, K criterion}\\
\hline
~2& &~0.00040&~0.00020&0.00235&~1.90236&~2.09843&0.955&400\\
\hline
\multicolumn{9}{c}{3 Stoppings, $m=25,50,75$, probit criterion}\\
\hline
~2&-2&~0.00040&~0.00020&0.00235&~1.90236&~2.09843&0.955&400\\
~2&-1&-0.00356&-0.00178&0.00457&~1.88412&~2.10876&0.956&375\\
~2&~0&-0.00207&-0.00104&0.02729&~1.69176&~2.30409&0.942&85\\
~2&~1&-0.00126&-0.00063&0.04022&~1.61407&~2.38341&0.940&26\\
~2&~2&-0.00254&-0.00127&0.04125&~1.60982&~2.38511&0.938&25\\
\hline
\multicolumn{9}{c}{3 Stoppings, $m=10,20,30$, K criterion}\\
\hline
-2& &~0.00158&-0.00079&0.10434&-2.5983&-1.39855&0.911&10\\
\hline
\multicolumn{9}{c}{3 Stoppings, $m=10,20,30$, probit criterion}\\
\hline
-2&-2&~0.00094&-0.00047&0.10323&-2.59878&-1.39934&0.911&10\\
-2&-1&-0.00185&~0.00093&0.10176&-2.59677&-1.40694&0.911&10\\
-2&~0&~0.00189&-0.00095&0.06994&-2.46607&-1.53014&0.929&65\\
-2&~1&~0.01088&-0.00544&0.00774&-2.11596&-1.86227&0.954&371\\
-2&~2&~0.00040&-0.00020&0.00235&-2.09764&-1.90157&0.955&400\\
\hline
\multicolumn{9}{c}{3 Stoppings, $m=10,20,30$, K criterion}\\
\hline
-1& &~0.00011&-0.00011&0.10225&-1.59953&-0.40025&0.912&10\\
\hline
\multicolumn{9}{c}{3 Stoppings, $m=10,20,30$, probit criterion}\\
\hline
-1&-2&-0.00858&~0.00858&0.09600&-1.59957&-0.41759&0.914&11\\
-1&-1&-0.01666&~0.01666&0.09109&-1.58457&-0.44875&0.918&15\\
-1&~0&~0.00189&-0.00189&0.06994&-1.46607&-0.53014&0.929&65\\
-1&~1&~0.03379&-0.03379&0.03176&-1.22465&-0.70777&0.947&239\\
-1&~2&~0.02406&-0.02406&0.01362&-1.11470&-0.83719&0.950&360\\
\hline
\multicolumn{9}{c}{3 Stoppings, $m=10,20,30$, K criterion}\\
\hline
~0& &-0.14051&--&0.05513&-0.54643&~0.26542&0.936&136\\
\hline
\multicolumn{9}{c}{3 Stoppings, $m=10,20,30$, probit criterion}\\
\hline
~0&-2&-0.06390&--&0.06283&-0.51695&~0.38916&0.935&78\\
~0&-1&-0.03391&--&0.06667&-0.49646&~0.42864&0.934&69\\
~0&~0&~0.00189&--&0.06994&-0.46607&~0.46986&0.929&65\\
~0&~1&~0.03043&--&0.07240&-0.43543&~0.49629&0.924&66\\
~0&~2&~0.06657&--&0.06925&-0.38770&~0.52085&0.934&78\\
\hline
\multicolumn{9}{c}{3 Stoppings, $m=10,20,30$, K criterion}\\
\hline
~1& &-0.00057&-0.00057&0.00337&~0.90082&~1.09804&0.954&400\\
\hline
\multicolumn{9}{c}{3 Stoppings, $m=10,20,30$, probit criterion}\\
\hline
~1&-2&-0.03090&-0.03090&0.01923&~0.82879&~1.10941&0.938&359\\
~1&-1&-0.03707&-0.03707&0.03385&~0.71194&~1.21392&0.934&247\\
~1&~0&~0.00189&~0.00189&0.06994&~0.53393&~1.46986&0.929&65\\
~1&~1&~0.01775&~0.01775&0.09253&~0.45157&~1.58392&0.922&15\\
~1&~2&~0.01001&~0.01001&0.09775&~0.41860&~1.60143&0.919&11\\
\hline
\multicolumn{9}{c}{3 Stoppings, $m=10,20,30$, K criterion}\\
\hline
~2& &0.00040&0.00020&0.00235&1.90236&2.09843&0.955&400\\
\hline
\multicolumn{9}{c}{3 Stoppings, $m=10,20,30$, probit criterion}\\
\hline
~2&-2&~0.00040&~0.00020&0.00235&~1.90236&~2.09843&0.955&400\\
~2&-1&-0.01437&-0.00719&0.01043&~1.85870&~2.11256&0.951&371\\
~2&~0&~0.00189&~0.00095&0.06994&~1.53393&~2.46986&0.929&65\\
~2&~1&~0.00666&~0.00333&0.09977&~1.41215&~2.60117&0.916&10\\
~2&~2&~0.00179&~0.00089&0.10434&~1.40209&~2.60148&0.911&10\\
\hline
\multicolumn{9}{c}{3 Stoppings, $m=5,10,15$, K criterion}\\
\hline
-2& &~0.00499&-0.0025&0.20374&-2.8349&-1.15511&0.884&5\\
\hline
\multicolumn{9}{c}{3 Stoppings, $m=5,10,15$, probit criterion}\\
\hline
-2&-2&~0.00422&-0.00211&0.20275&-2.83500&-1.15655&0.884&5\\
-2&-1&-0.00218&~0.00109&0.19784&-2.83348&-1.17088&0.890&5\\
-2&~0&~0.00014&-0.00007&0.13493&-2.65100&-1.34873&0.903&58\\
-2&~1&~0.01727&-0.00863&0.01399&-2.12791&-1.83755&0.952&369\\
-2&~2&~0.00137&-0.00068&0.00338&-2.09703&-1.90023&0.954&400\\
\hline
\multicolumn{9}{c}{3 Stoppings, $m=5,10,15$, K criterion}\\
\hline
-1& &-0.00094&~0.00094&0.19386&-1.83806&-0.16382&0.884&5\\
\hline
\multicolumn{9}{c}{3 Stoppings, $m=5,10,15$, probit criterion}\\
\hline
-1&-2&-0.01799&~0.01799&0.17915&-1.84115&-0.19482&0.896&6\\
-1&-1&-0.03255&~0.03255&0.17104&-1.81522&-0.24988&0.896&10\\
-1&~0&~0.00014&-0.00014&0.13493&-1.65100&-0.34873&0.903&58\\
-1&~1&~0.07150&-0.07150&0.06649&-1.26519&-0.59181&0.921&235\\
-1&~2&~0.05333&-0.05333&0.03940&-1.12594&-0.76739&0.936&348\\
\hline
\multicolumn{9}{c}{3 Stoppings, $m=5,10,15$, K criterion}\\
\hline
~0& &-0.19706&--&0.10483&-0.75579&~0.36167&0.928&133\\
\hline
\multicolumn{9}{c}{3 Stoppings, $m=5,10,15$, probit criterion}\\
\hline
~0&-2&-0.11859&--&0.12012&-0.74316&~0.50599&0.915&76\\
~0&-1&-0.06873&--&0.12762&-0.71105&~0.57359&0.912&64\\
~0&~0&~0.00014&--&0.13493&-0.65100&~0.65127&0.903&58\\
~0&~1&~0.06384&--&0.13592&-0.57304&~0.70072&0.914&65\\
~0&~2&~0.11084&--&0.12544&-0.50580&~0.72748&0.925&78\\
\hline
\multicolumn{9}{c}{3 Stoppings, $m=5,10,15$, K criterion}\\
\hline
~1& &-0.01086&-0.01086&0.01554&~0.88354&~1.09475&0.946&396\\
\hline
\multicolumn{9}{c}{3 Stoppings, $m=5,10,15$, probit criterion}\\
\hline
~1&-2&-0.06428&-0.06428&0.04773&~0.75278&~1.11865&0.929&346\\
~1&-1&-0.07541&-0.07541&0.07071&~0.58046&~1.26873&0.926&234\\
~1&~0&~0.00014&~0.00014&0.13493&~0.34900&~1.65127&0.903&58\\
~1&~1&~0.02523&~0.02523&0.17868&~0.23621&~1.81426&0.892&9\\
~1&~2&~0.02334&~0.02334&0.18438&~0.20168&~1.84500&0.888&6\\
\hline
\multicolumn{9}{c}{3 Stoppings, $m=5,10,15$, K criterion}\\
\hline
~2& &~0.00040&~0.00020&0.00235&~1.90236&~2.09843&0.955&400\\
\hline
\multicolumn{9}{c}{3 Stoppings, $m=5,10,15$, probit criterion}\\
\hline
~2&-2&-0.00037&-0.00018&0.00302&~1.90071&~2.09855&0.955&400\\
~2&-1&-0.02672&-0.01336&0.02218&~1.82580&~2.12076&0.945&367\\
~2&~0&~0.00014&~0.00007&0.13493&~1.34900&~2.65127&0.903&58\\
~2&~1&~0.01010&~0.00505&0.19822&~1.17678&~2.84343&0.887&5\\
~2&~2&~0.00579&~0.00290&0.20214&~1.16567&~2.84592&0.885&5\\
\hline
\multicolumn{9}{c}{3 Stoppings, $m=2,4,6$, K criterion}\\
\hline
-2& &-0.00268&~0.00134&0.51563&-3.10141&-0.90394&0.687&2\\
\hline
\multicolumn{9}{c}{3 Stoppings, $m=2,4,6$, probit criterion}\\
\hline
-2&-2&-0.01054&~0.00527&0.49998&-3.11239&-0.90870&0.694&2\\
-2&-1&-0.02893&~0.01446&0.48272&-3.12770&-0.93016&0.704&2\\
-2&~0&~0.00501&-0.00250&0.36711&-2.88776&-1.10223&0.778&54\\
-2&~1&~0.06251&-0.03126&0.06741&-2.12864&-1.74633&0.932&360\\
-2&~2&~0.01476&-0.00738&0.02292&-2.09431&-1.87616&0.949&396\\
\hline
\multicolumn{9}{c}{3 Stoppings, $m=2,4,6$, K criterion}\\
\hline
-1& &-0.0634&~0.0634&0.40771&-2.15826&~0.03146&0.730&3\\
\hline
\multicolumn{9}{c}{3 Stoppings, $m=2,4,6$, probit criterion}\\
\hline
-1&-2&-0.07022&~0.07022&0.42042&-2.15092&~0.01048&0.726&3\\
-1&-1&-0.06700&~0.06700&0.42843&-2.11614&-0.01786&0.733&8\\
-1&~0&~0.00501&-0.00501&0.36711&-1.88776&-0.10223&0.778&54\\
-1&~1&~0.17484&-0.17484&0.19682&-1.32545&-0.32487&0.869&222\\
-1&~2&~0.16560&-0.16560&0.17259&-1.13567&-0.53313&0.883&314\\
\hline
\multicolumn{9}{c}{3 Stoppings, $m=2,4,6$, K criterion}\\
\hline
~0& &-0.32619&--&0.27966&-1.07851&~0.42612&0.831&122\\
\hline
\multicolumn{9}{c}{3 Stoppings, $m=2,4,6$, probit criterion}\\
\hline
~0&-2&-0.24247&--&0.30000&-1.04447&~0.55953&0.817&94\\
~0&-1&-0.15688&--&0.32610&-0.99637&~0.68262&0.799&72\\
~0&~0&~0.00501&--&0.36711&-0.88776&~0.89777&0.778&54\\
~0&~1&~0.17018&--&0.30724&-0.68867&~1.02903&0.809&73\\
~0&~2&~0.25107&--&0.28894&-0.57418&~1.07632&0.824&92\\
\hline
\multicolumn{9}{c}{3 Stoppings, $m=2,4,6$, K criterion}\\
\hline
~1& &-0.11654&-0.11654&0.17114&~0.69811&~1.0688&0.902&365\\
\hline
\multicolumn{9}{c}{3 Stoppings, $m=2,4,6$, probit criterion}\\
\hline
~1&-2&-0.17063&-0.17063&0.19322&~0.53865&~1.12009&0.882&313\\
~1&-1&-0.16427&-0.16427&0.21376&~0.35156&~1.31990&0.878&225\\
~1&~0&~0.00501&~0.00501&0.36711&~0.11224&~1.89777&0.778&54\\
~1&~1&~0.07527&~0.07527&0.41361&~0.02334&~2.12721&0.734&10\\
~1&~2&~0.08223&~0.08223&0.38976&~0.00611&~2.15834&0.732&6\\
\hline
\multicolumn{9}{c}{3 Stoppings, $m=2,4,6$, K criterion}\\
\hline
~2& &-0.01081&-0.0054&0.0279&~1.88572&~2.09267&0.951&398\\
\hline
\multicolumn{9}{c}{3 Stoppings, $m=2,4,6$, probit criterion}\\
\hline
~2&-2&-0.02488&-0.01244&0.04915&~1.86658&~2.08366&0.943&394\\
~2&-1&-0.08119&-0.04059&0.09972&~1.72882&~2.10880&0.916&355\\
~2&~0&~0.00501&~0.00250&0.36711&~1.11224&~2.89777&0.778&54\\
~2&~1&~0.02065&~0.01032&0.47933&~0.92699&~3.11431&0.697&2\\
~2&~2&~0.00802&~0.00401&0.49502&~0.90976&~3.10627&0.691&2\\
\hline
\hline
\end{longtable}
\end{center}

\begin{center}
\begin{longtable}{llccccccc}
\caption{\em Simulation results for the Bernoulli case with the number of simulated i.i.d.\ draws per sample 400, the number of simulated samples 1000. Value of $\alpha$ for the probit rule is kept fixed: $\alpha=0$. CL: 95\% confidence limit, Cov.Prob.: coverage probability.\label{simul_bern1}}\\
\hline
\hline
   $\pi$&$\beta$&Bias&Relative Bias& MSE&Lower CL& Upper CL&Cov.Prob.& Aver.Size\\
\hline
\hline
\endfirsthead
\multicolumn{9}{c}%
{\tablename\ \thetable\ -- \it{Continued from previous page}} \\
\hline
\hline
 $\pi$&$\beta$&Bias&Relative Bias& MSE&Lower CL& Upper CL&Cov.Prob.& Aver.Size\\
\hline
\hline
\endhead
\hline \multicolumn{9}{r}{\it{Continued on next page}} \\
\endfoot
\hline
\endlastfoot
\multicolumn{9}{c}{No Stopping} \\
\hline
0.001& &~0.00004&~0.04000&0.00000&-0.00076&0.00284&0.335&400\\
\hline
\multicolumn{9}{c}{1 Stopping, $m=200$, K criterion}\\
\hline
0.001& &~0.00004&~0.04000&0.00000&-0.00076&0.00284&0.335&400\\
\hline
\multicolumn{9}{c}{1 Stopping, $m=200$, probit criterion}\\
\hline
0.001&-2&~0.00002&~0.02500&0.00000&-0.00079&0.00284&0.257&296\\
0.001&-1&~0.00003&~0.02750&0.00000&-0.00079&0.00285&0.257&296\\
0.001&~0&~0.00003&~0.02750&0.00000&-0.00079&0.00285&0.257&296\\
0.001&~1&~0.00004&~0.03500&0.00000&-0.00080&0.00287&0.257&296\\
0.001&~2&~0.00004&~0.03500&0.00000&-0.00080&0.00287&0.257&296\\
\hline
\multicolumn{9}{c}{3 Stoppings, $m=100,200,300$, K criterion}\\
\hline
0.001&	&~0.00004&~0.04000&0.00000&-0.00076&0.00284&0.335&400\\
\hline
\multicolumn{9}{c}{3 Stoppings, $m=100,200,300$, probit criterion}\\
\hline
0.001&-2&-0.00001&-0.01333&0.00001&-0.00085&0.00283&0.169&185\\
0.001&-1&-0.00001&-0.01167&0.00001&-0.00086&0.00283&0.169&185\\
0.001&~0&-0.00001&-0.01167&0.00001&-0.00086&0.00283&0.169&185\\
0.001&~1&-0.00001&-0.00667&0.00001&-0.00086&0.00285&0.169&185\\
0.001&~2&-0.00001&-0.00667&0.00001&-0.00086&0.00285&0.169&185\\
\hline
\multicolumn{9}{c}{No Stopping} \\
\hline
0.010& &-0.00009&-0.00925&0.00003&~0.00060&0.01922&0.906&400\\
\hline
\multicolumn{9}{c}{1 Stopping, $m=200$, K criterion}\\
\hline
0.010& &-0.00009&-0.00925&0.00003&~0.00060&0.01922&0.906&400\\
\hline
\multicolumn{9}{c}{1 Stopping, $m=200$, proit criterion}\\
\hline
0.010&-2&-0.00016&-0.01550&0.00004&-0.00087&0.02056&0.875&298\\
0.010&-1&-0.00014&-0.01375&0.00004&-0.00088&0.02060&0.875&297\\
0.010&~0&-0.00012&-0.01200&0.00004&-0.00089&0.02065&0.876&296\\
0.010&~1&-0.00008&-0.00800&0.00004&-0.00090&0.02074&0.876&295\\
0.010&~2&-0.00007&-0.00725&0.00004&-0.00092&0.02077&0.876&294\\
\hline
\multicolumn{9}{c}{3 Stoppings, $m=100,200,300$, K criterion}\\
\hline
0.010& &-0.00009&-0.00925&0.00003&0.00060&0.01922&0.906&400\\
\hline
\multicolumn{9}{c}{3 Stoppings, $m=100,200,300$, probit criterion}\\
\hline
0.010&-2&-0.00019&-0.01900&0.00007&-0.00308&0.02270&0.733&186\\
0.010&-1&-0.00018&-0.01825&0.00007&-0.00311&0.02275&0.732&186\\
0.010&~0&-0.00011&-0.01142&0.00007&-0.00311&0.02288&0.731&185\\
0.010&~1&-0.00011&-0.01075&0.00007&-0.00316&0.02295&0.731&183\\
0.010&~2&-0.00008&-0.00783&0.00007&-0.00319&0.02303&0.731&183\\
\hline
\multicolumn{9}{c}{No Stopping} \\
\hline
0.100& &-0.00038&-0.00380&0.00022&~0.07037&0.12887&0.950&400\\
\hline
\multicolumn{9}{c}{1 Stopping, $m=200$, K criterion}\\
\hline
0.100& &-0.00038&-0.00380&.00022&~0.07037&0.12887&0.950&400\\
\hline
\multicolumn{9}{c}{1 Stopping, $m=200$, probit criterion}\\
\hline
0.100&-2&-0.00035&-0.00350&0.00030&~0.06529&0.13401&0.944&315\\
0.100&-1&-0.00039&-0.00385&0.00032&~0.06474&0.13449&0.941&306\\
0.100&~0&-0.00027&-0.00268&0.00032&~0.06424&0.13522&0.941&296\\
0.100&~1&-0.00038&-0.00378&0.00033&~0.06368&0.13556&0.939&288\\
0.100&~2&-0.00050&-0.00500&0.00034&~0.06327&0.13573&0.938&283\\
\hline
\multicolumn{9}{c}{3 Stoppings, $m=100,200,300$, K criterion}\\
\hline
0.100& &-0.00038&-0.00380&0.00022&~0.07037&0.12887&0.950&400\\
\hline
\multicolumn{9}{c}{3 Stoppings, $m=100,200,300$, probit criterion}\\
\hline
0.10&-2&-0.00079&-0.00793&0.00060&~0.05446&0.14395&0.935&210\\
0.10&-1&-0.00034&-0.00336&0.00064&~0.05344&0.14589&0.939&196\\
0.10&~0&-0.00005&-0.00053&0.00066&~0.05241&0.14749&0.939&185\\
0.10&~1&-0.00003&-0.00029&0.00068&~0.05130&0.14864&0.939&174\\
0.10&~2&~0.00034&~0.00341&0.00071&~0.05071&0.14997&0.935&165\\
\hline
\multicolumn{9}{c}{No Stopping} \\
\hline
0.30& &~0.00058&~0.00192&0.00054&~0.25571&0.34544&0.946&400\\
\hline
\multicolumn{9}{c}{1 Stopping, $m=200$, K criterion}\\
\hline
0.30& &~0.00058&~0.00192&0.00054&~0.25571&0.34544&0.946&400\\
\hline
\multicolumn{9}{c}{1 Stopping, $m=200$, probit criterion}\\
\hline
0.30&-2&~0.00066&~0.00221&0.00069&~0.25089&0.35043&0.945&347\\
0.30&-1&~0.00084&~0.00278&0.00076&~0.24888&0.35279&0.945&323\\
0.30&~0&~0.00069&~0.00230&0.00080&~0.24620&0.35518&0.947&296\\
0.30&~1&~0.00038&~0.00128&0.00083&~0.24408&0.35668&0.947&276\\
0.30&~2&~0.00088&~0.00294&0.00090&~0.24250&0.35927&0.943&254\\
\hline
\multicolumn{9}{c}{3 Stoppings, $m=100,200,300$, K criterion}\\
\hline
0.30& &0.00058&0.00192&0.00054&0.25571&0.34544&0.946&400\\
\hline
\multicolumn{9}{c}{3 Stoppings, $m=100,200,300$, probit criterion}\\
\hline
0.30&-2&-0.00028&-0.00092&0.00117&~0.23847&0.36098&0.935&265\\
0.30&-1&-0.00008&-0.00027&0.00140&~0.23261&0.36722&0.944&222\\
0.30&~0&~0.00068&~0.00227&0.00161&~0.22751&0.37385&0.939&185\\
0.30&~1&~0.00108&~0.00359&0.00177&~0.22372&0.37844&0.939&158\\
0.30&~2&~0.00201&~0.00668&0.00193&~0.22084&0.38317&0.941&138\\
\hline
\multicolumn{9}{c}{No Stopping} \\
\hline
0.50& &~0.00089&~0.00177&0.00063&~0.45195&0.54983&0.943&400\\
\hline
\multicolumn{9}{c}{1 Stopping, $m=200$, K criterion}\\
\hline
0.50& &~0.00089&~0.00177&0.00063&~0.45195&0.54983&0.943&400\\
\hline
\multicolumn{9}{c}{1 Stopping, $m=200$, probit criterion}\\
\hline
0.50&-2&~0.00056&~0.00112&0.00071&~0.44841&0.55271&0.944&368\\
0.50&-1&~0.00069&~0.00137&0.00078&~0.44571&0.55567&0.948&340\\
0.50&~0&~0.00055&~0.00110&0.00088&~0.44111&0.56000&0.954&296\\
0.50&~1&-0.00002&-0.00003&0.00098&~0.43719&0.56278&0.955&263\\
0.50&~2&~0.00005&~0.00009&0.00109&~0.43408&0.56601&0.952&231\\
\hline
\multicolumn{9}{c}{3 Stoppings, $m=100,200,300$, K criterion}\\
\hline
0.50& &0.00089&0.00177&0.00063&0.45195&0.54983&0.943&400\\
\hline
\multicolumn{9}{c}{3 Stoppings, $m=100,200,300$, probit criterion}\\
\hline
0.50&-2&-0.00144&-0.00288&0.00109&~0.43808&0.55904&0.944&312\\
0.50&-1&-0.00132&-0.00264&0.00141&~0.42961&0.56775&0.944&251\\
0.50&~0&-0.00031&-0.00063&0.00183&~0.41983&0.57954&0.943&185\\
0.50&~1&~0.00011&~0.00023&0.00217&~0.41298&0.58724&0.938&144\\
0.50&~2&-0.00031&-0.00062&0.00240&~0.40715&0.59223&0.938&120\\
\hline
\multicolumn{9}{c}{No Stopping} \\
\hline
0.70&-0.00057&-0.00082&0.00054&~0.65456&0.74429&0.946&400\\
\hline
\multicolumn{9}{c}{1 Stopping, $m=200$, K criterion}\\
\hline
0.70& &-0.00057&-0.00082&0.00054&~0.65456&0.74429&0.946&400\\
\hline
\multicolumn{9}{c}{1 Stopping, $m=200$, probit criterion}\\
\hline
0.70&-2&-0.00095&-0.00136&0.00059&~0.65269&0.74541&0.946&384\\
0.70&-1&-0.00116&-0.00165&0.00068&~0.64957&0.74812&0.947&353\\
0.70&~0&-0.00069&-0.00099&0.00080&~0.64482&0.75380&0.947&296\\
0.70&~1&-0.00033&-0.00047&0.00093&~0.64074&0.75859&0.942&248\\
0.70&~2&-0.00017&-0.00025&0.00102&~0.63822&0.76143&0.941&219\\
\hline
\multicolumn{9}{c}{3 Stoppings, $m=100,200,300$, K criterion}\\
\hline
0.70& &-0.00057&-0.00082&0.00054&~0.65456&0.74429&0.946&400\\
\hline
\multicolumn{9}{c}{3 Stoppings, $m=100,200,300$, probit criterion}\\
\hline
0.70&-2&-0.00135&-0.00192&0.00079&~0.64838&0.74893&0.945&355\\
0.70&-1&-0.00168&-0.00240&0.00110&~0.63835&0.75828&0.940&277\\
0.70&~0&-0.00068&-0.00097&0.00161&~0.62615&0.77249&0.939&185\\
0.70&~1&-0.00061&-0.00087&0.00193&~0.61733&0.78145&0.944&133\\
0.70&~2&-0.00070&-0.00100&0.00217&~0.61249&0.78612&0.938&110\\
\hline
\multicolumn{9}{c}{No Stopping} \\
\hline
0.90& &~0.00038&~0.00042&0.00022&~0.87113&0.92963&0.950&400\\
\hline
\multicolumn{9}{c}{1 Stopping, $m=200$, K criterion}\\
\hline
0.90& &~0.00038&~0.00042&0.00022&~0.87113&0.92963&0.950&400\\
\hline
\multicolumn{9}{c}{1 Stopping, $m=200$, probit criterion}\\
\hline
0.90&-2&~0.00027&~0.00030&0.00022&~0.87067&0.92988&0.950&394\\
0.90&-1&~0.00013&~0.00014&0.00025&~0.86863&0.93163&0.949&363\\
0.90&~0&~0.00027&~0.00030&0.00032&~0.86478&0.93576&0.941&296\\
0.90&~1&~0.00068&~0.00076&0.00041&~0.86176&0.93960&0.929&237\\
0.90&~2&~0.00039&~0.00043&0.00046&~0.85964&0.94114&0.926&207\\
\hline
\multicolumn{9}{c}{3 Stoppings, $m=100,200,300$, K criterion}\\
\hline
0.90& &~0.00038&~0.00042&0.00022&~0.87113&0.92963&0.950&400\\
\hline
\multicolumn{9}{c}{3 Stoppings, $m=100,200,300$, probit criterion}\\
\hline
0.90&-2&~0.00011&~0.00013&0.00025&~0.86929&0.93094&0.949&379\\
0.90&-1&-0.00009&-0.00010&0.00038&~0.86284&0.93698&0.947&301\\
0.90&~0&~0.00005&~0.00006&0.00066&~0.85251&0.94759&0.939&185\\
0.90&~1&~0.00037&~0.00041&0.00083&~0.84600&0.95473&0.933&124\\
0.90&~2&~0.00049&~0.00054&0.00090&~0.84333&0.95765&0.932&104\\
\hline
\hline
\end{longtable}
\end{center}

\newpage
\begin{center}
\begin{longtable}{llccccccc}
\caption{\em Simulation results for the Bernoulli case with the number of simulated i.i.d.\ draws per sample 400, the number of simulated samples 1000. Different scenarios for the three stopping occasions. Value of $\alpha$ for the probit rule is kept fixed: $\alpha=0$. CL: 95\% confidence limit, Cov.Prob.: coverage probability.\label{simul_bern2}}\\
\hline
\hline
   $\pi$&$\beta$&Bias&Relative Bias& MSE&Lower CL& Upper CL&Cov.Prob.& Aver.Size\\
\hline
\hline
\endfirsthead
\multicolumn{9}{c}%
{\tablename\ \thetable\ -- \it{Continued from previous page}} \\
\hline
\hline
 $\pi$&$\beta$&Bias&Relative Bias& MSE&Lower CL& Upper CL&Cov.Prob.& Aver.Size\\
\hline
\hline
\endhead
\hline \multicolumn{9}{r}{\it{Continued on next page}} \\
\endfoot
\hline
\endlastfoot
\multicolumn{9}{c}{3 Stoppings, $m=50,100,150$, K criterion}\\
\hline
0.001& &~0.00004&~0.04000&0.00000&-0.00076&0.00284&0.335&400\\
\hline
\multicolumn{9}{c}{3 Stoppings, $m=50,100,150$, probit criterion}\\
\hline
0.001&-2&~0.00002&~0.01833&0.00001&-0.00091&0.00295&0.109&118\\
0.001&-1&~0.00002&~0.01833&0.00001&-0.00091&0.00295&0.109&118\\
0.001&~0&~0.00002&~0.01833&0.00001&-0.00091&0.00295&0.109&118\\
0.001&~1&~0.00002&~0.01833&0.00001&-0.00091&0.00295&0.109&118\\
0.001&~2&~0.00002&~0.01833&0.00001&-0.00091&0.00295&0.109&118\\
\hline
\multicolumn{9}{c}{3 Stoppings, $m=50,100,150$, K criterion}\\
\hline
0.010& &-0.00009&-0.00925&0.00003&~0.00060&0.01922&0.906&400\\
\hline
\multicolumn{9}{c}{3 Stoppings, $m=50,100,150$, probit criterion}\\
\hline
0.010&-2&-0.00010&-0.00958&0.00013&-0.00503&0.02484&0.557&119\\
0.010&-1&-0.00004&-0.00408&0.00013&-0.00508&0.02500&0.557&118\\
0.010&~0&-0.00003&-0.00342&0.00013&-0.00515&0.02508&0.557&118\\
0.010&~1&~0.00004&~0.00367&0.00013&-0.00524&0.02531&0.557&117\\
0.010&~2&~0.00013&~0.01267&0.00014&-0.00530&0.02556&0.557&116\\
\hline
\multicolumn{9}{c}{3 Stoppings, $m=50,100,150$, K criterion}\\
\hline
0.100& &-0.00038&-0.00380&0.00022&~0.07037&0.12887&0.950&400\\
\hline
\multicolumn{9}{c}{3 Stoppings, $m=50,100,150$, probit criterion}\\
\hline
0.100&-2&-0.00091&-0.00906&0.00111&~0.03865&0.15953&0.911&142\\
0.100&-1&~0.00002&~0.00023&0.00120&~0.03713&0.16292&0.905&128\\
0.100&~0&~0.00042&~0.00417&0.00127&~0.03538&0.16546&0.902&118\\
0.100&~1&~0.00080&~0.00799&0.00134&~0.03395&0.16765&0.900&108\\
0.100&~2&~0.00132&~0.01317&0.00145&~0.03256&0.17007&0.897&97\\
\hline
\multicolumn{9}{c}{3 Stoppings, $m=50,100,150$, K criterion}\\
\hline
0.300& &~0.00058&~0.00192&0.00054&~0.25571&0.34544&0.946&400\\
\hline
\multicolumn{9}{c}{3 Stoppings, $m=50,100,150$, probit criterion}\\
\hline
0.300&-2&-0.00147&-0.00489&0.00212&~0.21895&0.37811&0.929&209\\
0.300&-1&-0.00129&-0.00429&0.00259&~0.20843&0.38899&0.936&158\\
0.300&~0&~0.00014&~0.00047&0.00306&~0.19961&0.40067&0.933&118\\
0.300&~1&~0.00114&~0.00380&0.00341&~0.19329&0.40899&0.933&90\\
0.300&~2&~0.00099&~0.00331&0.00360&~0.18751&0.41447&0.934&75\\
\hline
\multicolumn{9}{c}{3 Stoppings, $m=50,100,150$, K criterion}\\
\hline
0.500& &~0.00089&~0.00177&0.00063&~0.45195&0.54983&0.943&400\\
\hline
\multicolumn{9}{c}{3 Stoppings, $m=50,100,150$, probit criterion}\\
\hline
0.500&-2&-0.00452&-0.00904&0.00192&~0.42227&0.56869&0.940&276\\
0.500&-1&-0.00447&-0.00893&0.00272&~0.40515&0.58591&0.937&194\\
0.500&~0&-0.00258&-0.00515&0.00353&~0.38753&0.60732&0.933&118\\
0.500&~1&-0.00164&-0.00327&0.00432&~0.37636&0.62037&0.926&79\\
0.500&~2&-0.00220&-0.00441&0.00486&~0.36781&0.62778&0.925&61\\
\hline
\multicolumn{9}{c}{3 Stoppings, $m=50,100,150$, K criterion}\\
\hline
0.700& &-0.00057&-0.00082&0.00054&~0.65456&0.74429&0.946&400\\
\hline
\multicolumn{9}{c}{3 Stoppings, $m=50,100,150$, probit criterion}\\
\hline
0.700&-2&-0.00319&-0.00456&0.00110&~0.64010&0.75352&0.944&334\\
0.700&-1&-0.00308&-0.00441&0.00195&~0.61998&0.77385&0.937&224\\
0.700&~0&-0.00014&-0.00020&0.00306&~0.59933&0.80039&0.933&118\\
0.700&~1&~0.00178&~0.00254&0.00370&~0.58693&0.81662&0.933&70\\
0.700&~2&~0.00148&~0.00211&0.00407&~0.57985&0.82311&0.933&56\\
\hline
\multicolumn{9}{c}{3 Stoppings, $m=50,100,150$, K criterion}\\
\hline
0.900& &0.00038&0.00042&0.00022&0.87113&0.92963&0.950&400\\
\hline
\multicolumn{9}{c}{3 Stoppings, $m=50,100,150$, probit criterion}\\
\hline
0.900&-2&-0.00015&-0.00017&0.00030&~0.86703&0.93266&0.948&369\\
0.900&-1&-0.00076&-0.00084&0.00064&~0.85332&0.94517&0.940&258\\
0.900&~0&-0.00042&-0.00046&0.00127&~0.83454&0.96462&0.902&118\\
0.900&~1&-0.00016&-0.00018&0.00174&~0.82424&0.97544&0.878&64\\
0.900&~2&-0.00066&-0.00073&0.00191&~0.81949&0.97919&0.872&52\\
\hline
\multicolumn{9}{c}{3 Stoppings, $m=25,50,75$, K criterion}\\
\hline
0.001& &0.00004&0.04000&0.00000&-0.00076&0.00284&0.335&400\\
\hline
\multicolumn{9}{c}{3 Stoppings, $m=25,50,75$, probit criterion}\\
\hline
0.001&-2&~0.00006&~0.06083&0.00003&-0.00094&0.00306&0.076&85\\
0.001&-1&~0.00008&~0.07833&0.00003&-0.00096&0.00311&0.076&85\\
0.001&~0&~0.00008&~0.07833&0.00003&-0.00096&0.00311&0.076&85\\
0.001&~1&~0.00008&~0.07833&0.00003&-0.00096&0.00311&0.076&85\\
0.001&~2&~0.00011&~0.11167&0.00003&-0.00099&0.00321&0.076&85\\
\hline
\multicolumn{9}{c}{3 Stoppings, $m=25,50,75$, K criterion}\\
\hline
0.010& &-0.00009&-0.00925&0.00003&~0.00060&0.01922&0.906&400\\
\hline
\multicolumn{9}{c}{3 Stoppings, $m=25,50,75$, probit criterion}\\
\hline
0.010&-2&-0.00036&-0.03617&0.00026&-0.00592&0.02520&0.379&86\\
0.010&-1&-0.00022&-0.02200&0.00027&-0.00605&0.02561&0.379&85\\
0.010&~0&-0.00007&-0.00675&0.00028&-0.00615&0.02602&0.379&85\\
0.010&~1&-0.00007&-0.00675&0.00028&-0.00618&0.02604&0.379&85\\
0.010&~2&~0.00005&~0.00483&0.00029&-0.00633&0.02642&0.379&84\\
\hline
\multicolumn{9}{c}{3 Stoppings, $m=25,50,75$, K criterion}\\
\hline
0.100& &-0.00038&-0.00380&0.00022&~0.07037&0.12887&0.950&400\\
\hline
\multicolumn{9}{c}{3 Stoppings, $m=25,50,75$, probit criterion}\\
\hline
0.100&-2&-0.00236&-0.02358&0.00215&~0.01869&0.17660&0.915&108\\
0.100&-1&-0.00098&-0.00976&0.00239&~0.01628&0.18177&0.917&95\\
0.100&~0&~0.00102&~0.01020&0.00258&~0.01410&0.18794&0.918&85\\
0.100&~1&~0.00223&~0.02227&0.00276&~0.01243&0.19203&0.917&77\\
0.100&~2&~0.00322&~0.03220&0.00297&~0.00998&0.19646&0.916&64\\
\hline
\multicolumn{9}{c}{3 Stoppings, $m=25,50,75$, K criterion}\\
\hline
0.300& &~0.00058&~0.00192&0.00054&~0.25571&0.34544&0.946&400\\
\hline
\multicolumn{9}{c}{3 Stoppings, $m=25,50,75$, probit criterion}\\
\hline
0.300&-2&-0.00569&-0.01897&0.00391&~0.19044&0.39817&0.941&181\\
0.300&-1&-0.00287&-0.00957&0.00474&~0.17523&0.41903&0.949&125\\
0.300&~0&~0.00137&~0.00456&0.00582&~0.16280&0.43994&0.947&85\\
0.300&~1&~0.00272&~0.00906&0.00658&~0.15221&0.45322&0.948&56\\
0.300&~2&~0.00534&~0.01781&0.00753&~0.14640&0.46429&0.942&42\\
\hline
\multicolumn{9}{c}{3 Stoppings, $m=25,50,75$, K criterion}\\
\hline
0.500& &0.00089&0.00177&0.00063&0.45195&0.54983&0.943&400\\
\hline
\multicolumn{9}{c}{3 Stoppings, $m=25,50,75$, probit criterion}\\
\hline
0.500&-2&-0.00724&-0.01448&0.00359&~0.40141&0.58412&0.942&255\\
0.500&-1&-0.00596&-0.01193&0.00513&~0.37358&0.61449&0.944&164\\
0.500&~0&-0.00163&-0.00325&0.00676&~0.34676&0.64999&0.957&85\\
0.500&~1&~0.00063&~0.00126&0.00843&~0.33018&0.67107&0.954&46\\
0.500&~2&~0.00054&~0.00107&0.00953&~0.31942&0.68165&0.952&32\\
\hline
\multicolumn{9}{c}{3 Stoppings, $m=25,50,75$, K criterion}\\
\hline
0.700& &-0.00057&-0.00082&0.00054&0.65456&0.74429&0.946&400\\
\hline
\multicolumn{9}{c}{3 Stoppings, $m=25,50,75$, probit criterion}\\
\hline
0.700&-2&-0.00514&-0.00734&0.00175&~0.62967&0.76005&0.946&325\\
0.700&-1&-0.00251&-0.00359&0.00360&~0.59786&0.79711&0.940&200\\
0.700&~0&-0.00137&-0.00195&0.00582&~0.56006&0.83720&0.947&85\\
0.700&~1&~0.00076&~0.00109&0.00744&~0.54079&0.86074&0.942&39\\
0.700&~2&~0.00010&~0.00014&0.00833&~0.52994&0.87026&0.939&28\\
\hline
\multicolumn{9}{c}{3 Stoppings, $m=25,50,75$, K criterion}\\
\hline
0.900& &0.00038&0.00042&0.00022&0.87113&0.92963&0.950&400\\
\hline
\multicolumn{9}{c}{3 Stoppings, $m=25,50,75$, probit criterion}\\
\hline
0.900&-2&-0.00067&-0.00074&0.00041&~0.86389&0.93478&0.948&364\\
0.900&-1&-0.00090&-0.00100&0.00128&~0.84251&0.95570&0.938&239\\
0.900&~0&-0.00102&-0.00113&0.00258&~0.81206&0.98590&0.918&85\\
0.900&~1&-0.00002&-0.00003&0.00346&~0.79898&1.00098&0.905&34\\
0.900&~2&~0.00007&~0.00007&0.00383&~0.79412&1.00601&0.903&26\\
\hline
\multicolumn{9}{c}{3 Stoppings, $m=10,20,30$, K criterion}\\
\hline
0.001& &~0.00004&~0.04000&0.00000&-0.00076&0.00284&0.335&400\\
\hline
\multicolumn{9}{c}{3 Stoppings, $m=10,20,30$, probit criterion}\\
\hline
0.001&-2&~0.00010&~0.09667&0.00007&-0.00091&0.00310&0.056&65\\
0.001&-1&~0.00019&~0.19417&0.00008&-0.00099&0.00338&0.056&65\\
0.001&~0&~0.00022&~0.22500&0.00008&-0.00102&0.00347&0.056&65\\
0.001&~1&~0.00024&~0.24167&0.00008&-0.00104&0.00352&0.056&65\\
0.001&~2&~0.00024&~0.24167&0.00008&-0.00104&0.00352&0.056&65\\
\hline
\multicolumn{9}{c}{3 Stoppings, $m=10,20,30$, K criterion}\\
\hline
0.010& &-0.00009&-0.00925&0.00003&~0.00060&0.01922&0.906&400\\
\hline
\multicolumn{9}{c}{3 Stoppings, $m=10,20,30$, probit criterion}\\
\hline
0.010&-2&-0.00107&-0.10683&0.00056&-0.00634&0.02420&0.234&67\\
0.010&-1&-0.00074&-0.07408&0.00059&-0.00664&0.02516&0.234&66\\
0.010&~0&-0.00053&-0.05308&0.00061&-0.00686&0.02580&0.234&65\\
0.010&~1&-0.00010&-0.00992&0.00067&-0.00714&0.02694&0.234&63\\
0.010&~2&~0.00014&~0.01358&0.00069&-0.00728&0.02755&0.234&63\\
\hline
\multicolumn{9}{c}{3 Stoppings, $m=10,20,30$, K criterion}\\
\hline
0.100& &-0.00038&-0.00380&0.00022&0.07037&0.12887&0.950&400\\
\hline
\multicolumn{9}{c}{3 Stoppings, $m=10,20,30$, probit criterion}\\
\hline
0.100&-2&-0.00644&-0.06438&0.00460&-0.00696&0.19408&0.763&93\\
0.100&-1&-0.00254&-0.02539&0.00521&-0.01238&0.20730&0.768&75\\
0.100&~0&~0.00142&~0.01417&0.00597&-0.01610&0.21894&0.767&65\\
0.100&~1&~0.00542&~0.05422&0.00666&-0.01876&0.22960&0.767&57\\
0.100&~2&~0.00797&~0.07967&0.00723&-0.02260&0.23853&0.769&46\\
\hline
\multicolumn{9}{c}{3 Stoppings, $m=10,20,30$, K criterion}\\
\hline
0.300& &~0.00058&~0.00192&0.00054&~0.25571&0.34544&0.946&400\\
\hline
\multicolumn{9}{c}{3 Stoppings, $m=10,20,30$, probit criterion}\\
\hline
0.300&-2&-0.01617&-0.05390&0.00949&~0.13942&0.42824&0.890&163\\
0.300&-1&-0.00842&-0.02806&0.01231&~0.11491&0.46825&0.887&105\\
0.300&~0&-0.00036&-0.00119&0.01509&~0.09422&0.50506&0.884&65\\
0.300&~1&~0.00506&~0.01687&0.01720&~0.07927&0.53085&0.883&35\\
0.300&~2&~0.00837&~0.02790&0.01885&~0.06856&0.54818&0.878&24\\
\hline
\multicolumn{9}{c}{3 Stoppings, $m=10,20,30$, K criterion}\\
\hline
0.500& &0.00089&0.00177&0.00063&0.45195&0.54983&0.943&400\\
\hline
\multicolumn{9}{c}{3 Stoppings, $m=10,20,30$, probit criterion}\\
\hline
0.500&-2&-0.01862&-0.03725&0.00892&~0.35580&0.60696&0.935&242\\
0.500&-1&-0.00997&-0.01994&0.01279&~0.31347&0.66660&0.932&145\\
0.500&~0&-0.00023&-0.00046&0.01778&~0.27093&0.72861&0.917&65\\
0.500&~1&~0.00439&~0.00878&0.02133&~0.24596&0.76283&0.902&27\\
0.500&~2&~0.00444&~0.00888&0.02342&~0.22892&0.77996&0.894&15\\
\hline
\multicolumn{9}{c}{3 Stoppings, $m=10,20,30$, K criterion}\\
\hline
0.700& &-0.00057&-0.00082&0.00054&0.65456&0.74429&0.946&400\\
\hline
\multicolumn{9}{c}{3 Stoppings, $m=10,20,30$, probit criterion}\\
\hline
0.700&-2&-0.01218&-0.01740&0.00455&~0.60216&0.77347&0.943&311\\
0.700&-1&-0.00841&-0.01201&0.00936&~0.54889&0.83429&0.917&183\\
0.700&~0&~0.00036&~0.00051&0.01509&~0.49494&0.90578&0.884&65\\
0.700&~1&~0.00595&~0.00850&0.01902&~0.46825&0.94365&0.848&19\\
0.700&~2&~0.00223&~0.00319&0.01998&~0.44891&0.95555&0.846&11\\
\hline
\multicolumn{9}{c}{3 Stoppings, $m=10,20,30$, K criterion}\\
\hline
0.900& &0.00038&0.00042&0.00022&0.87113&0.92963&0.950&400\\
\hline
\multicolumn{9}{c}{3 Stoppings, $m=10,20,30$, probit criterion}\\
\hline
0.900&-2&-0.00263&-0.00292&0.00099&~0.85565&0.93909&0.942&357\\
0.900&-1&-0.00269&-0.00299&0.00273&~0.82267&0.97195&0.879&224\\
0.900&~0&-0.00142&-0.00157&0.00597&~0.78106&1.01610&0.767&65\\
0.900&~1&-0.00016&-0.00018&0.00792&~0.76663&1.03305&0.681&16\\
0.900&~2&-0.00187&-0.00207&0.00885&~0.75822&1.03805&0.657&10\\
\hline
\multicolumn{9}{c}{3 Stoppings, $m=5,10,15$, K criterion}\\
\hline
0.001& &0.00004&0.04000&0.00000&-0.00076&0.00284&0.335&400\\
\hline
\multicolumn{9}{c}{3 Stoppings, $m=5,10,15$, probit criterion}\\
\hline
0.001&-2&~0.00036&~0.36083&0.00015&-0.00112&0.00384&0.051&58\\
0.001&-1&~0.00046&~0.46083&0.00018&-0.00118&0.00410&0.051&58\\
0.001&~0&~0.00053&~0.52500&0.00019&-0.00124&0.00429&0.051&58\\
0.001&~1&~0.00056&~0.55833&0.00019&-0.00126&0.00438&0.051&58\\
0.001&~2&~0.00072&~0.72500&0.00023&-0.00138&0.00483&0.051&58\\
\hline
\multicolumn{9}{c}{3 Stoppings, $m=5,10,15$, K criterion}\\
\hline
0.010& &-0.00009&-0.00925&0.00003&0.00060&0.01922&0.906&400\\
\hline
\multicolumn{9}{c}{3 Stoppings, $m=5,10,15$, probit criterion}\\
\hline
0.010&-2&-0.00133&-0.13267&0.00098&-0.00582&0.02317&0.185&61\\
0.010&-1&-0.00040&-0.03975&0.00118&-0.00653&0.02573&0.185&59\\
0.010&~0&~0.00045&~0.04525&0.00134&-0.00722&0.02812&0.185&58\\
0.010&~1&~0.00138&~0.13842&0.00154&-0.00792&0.03069&0.185&57\\
0.010&~2&~0.00201&~0.20108&0.00167&-0.00849&0.03251&0.185&55\\
\hline
\multicolumn{9}{c}{3 Stoppings, $m=5,10,15$, K criterion}\\
\hline
0.100& &-0.00038&-0.00380&0.00022&~0.07037&0.12887&0.950&400\\
\hline
\multicolumn{9}{c}{3 Stoppings, $m=5,10,15$, probit criterion}\\
\hline
0.100&-2&-0.01102&-0.11019&0.00829&-0.01726&0.19522&0.590&85\\
0.100&-1&-0.00372&-0.03716&0.01003&-0.02464&0.21721&0.590&70\\
0.100&~0&~0.00370&~0.03700&0.01172&-0.03124&0.23864&0.591&58\\
0.100&~1&~0.00841&~0.08406&0.01284&-0.03569&0.25250&0.592&50\\
0.100&~2&~0.01413&~0.14126&0.01508&-0.04107&0.26932&0.587&40\\
\hline
\multicolumn{9}{c}{3 Stoppings, $m=5,10,15$, K criterion}\\
\hline
0.300& &0.00058&0.00192&0.00054&0.25571&0.34544&0.946&400\\
\hline
\multicolumn{9}{c}{3 Stoppings, $m=5,10,15$, probit criterion}\\
\hline
0.300&-2&-0.03043&-0.10143&0.01704&~0.09311&0.44603&0.850&154\\
0.300&-1&-0.01489&-0.04962&0.02187&~0.06332&0.50690&0.852&98\\
0.300&~0&~0.00108&~0.00358&0.02818&~0.03722&0.56493&0.845&58\\
0.300&~1&~0.01274&~0.04246&0.03262&~0.01832&0.60715&0.847&30\\
0.300&~2&~0.01986&~0.06621&0.03594&~0.00686&0.63287&0.842&18\\
\hline
\multicolumn{9}{c}{3 Stoppings, $m=5,10,15$, K criterion}\\
\hline
0.500& &~0.00089&~0.00177&0.00063&~0.45195&0.54983&0.943&400\\
\hline
\multicolumn{9}{c}{3 Stoppings, $m=5,10,15$, probit criterion}\\
\hline
0.500&-2&-0.03272&-0.06544&0.01516&~0.30543&0.62913&0.937&236\\
0.500&-1&-0.02032&-0.04063&0.02252&~0.24597&0.71340&0.926&135\\
0.500&~0&-0.00118&-0.00236&0.03350&~0.19676&0.80088&0.922&58\\
0.500&~1&~0.01307&~0.02614&0.03974&~0.17034&0.85580&0.934&19\\
0.500&~2&~0.01661&~0.03321&0.04331&~0.15396&0.87925&0.940&9\\
\hline
\multicolumn{9}{c}{3 Stoppings, $m=5,10,15$, K criterion}\\
\hline
0.700& &-0.00057&-0.00082&0.00054&0.65456&0.74429&0.946&400\\
\hline
\multicolumn{9}{c}{3 Stoppings, $m=5,10,15$, probit criterion}\\
\hline
0.700&-2&-0.02273&-0.03247&0.00991&~0.56915&0.78539&0.929&305\\
0.700&-1&-0.01782&-0.02546&0.01795&~0.49359&0.87076&0.897&175\\
0.700&~0&-0.00107&-0.00154&0.02818&~0.43507&0.96278&0.845&58\\
0.700&~1&~0.00747&~0.01067&0.03536&~0.41027&1.00466&0.810&15\\
0.700&~2&~0.00779&~0.01113&0.03871&~0.39544&1.02014&0.799&6\\

\hline
\multicolumn{9}{c}{3 Stoppings, $m=5,10,15$, K criterion}\\
\hline
0.900& &0.00038&0.00042&0.00022&0.87113&0.92963&0.950&400\\
\hline
\multicolumn{9}{c}{3 Stoppings, $m=5,10,15$, probit criterion}\\
\hline
0.900&-2&-0.00509&-0.00566&0.00193&~0.84798&0.94184&0.931&359\\
0.900&-1&-0.00977&-0.01086&0.00622&~0.79625&0.98420&0.812&216\\
0.900&~0&-0.00370&-0.00411&0.01172&~0.76136&1.03124&0.591&58\\
0.900&~1&-0.00048&-0.00054&0.01465&~0.75066&1.04837&0.473&10\\
0.900&~2&-0.00050&-0.00056&0.01632&~0.74697&1.05203&0.430&5\\
\hline
\multicolumn{9}{c}{3 Stoppings, $m=2,4,6$, K criterion}\\
\hline
0.001& &0.00004&0.04000&0.00000&-0.00076&0.00284&0.335&400\\
\hline
\multicolumn{9}{c}{3 Stoppings, $m=2,4,6$, probit criterion}\\
\hline
0.001&-2&~0.00006&~0.06167&0.00034&-0.00059&0.00272&0.046&56\\
0.001&-1&~0.00072&~0.72333&0.00062&-0.00091&0.00436&0.046&55\\
0.001&~0&~0.00138&~1.37500&0.00077&-0.00139&0.00614&0.046&54\\
0.001&~1&~0.00138&~1.37500&0.00077&-0.00139&0.00614&0.046&54\\
0.001&~2&~0.00162&~1.62500&0.00095&-0.00141&0.00666&0.046&54\\
\hline
\multicolumn{9}{c}{3 Stoppings, $m=2,4,6$, K criterion}\\
\hline
0.010& &-0.00009&-0.00925&0.00003&~0.00060&0.01922&0.906&400\\
\hline
\multicolumn{9}{c}{3 Stoppings, $m=2,4,6$, probit criterion}\\
\hline
0.010&-2&-0.00500&-0.49992&0.00093&-0.00236&0.01236&0.142&59\\
0.010&-1&-0.00214&-0.21400&0.00238&-0.00343&0.01915&0.142&57\\
0.010&~0&~0.00067&~0.06692&0.00366&-0.00473&0.02607&0.143&54\\
0.010&~1&~0.00272&~0.27175&0.00478&-0.00539&0.03083&0.143&53\\
0.010&~2&~0.00330&~0.33008&0.00518&-0.00547&0.03208&0.143&53\\
\hline
\multicolumn{9}{c}{3 Stoppings, $m=2,4,6$, K criterion}\\
\hline
0.100& &-0.00038&-0.00380&0.00022&0.07037&0.12887&0.950&400\\
\hline
\multicolumn{9}{c}{3 Stoppings, $m=2,4,6$, probit criterion}\\
\hline
0.100&-2&-0.03106&-0.31057&0.01499&-0.00712&0.14500&0.365&90\\
0.100&-1&-0.01085&-0.10845&0.02380&-0.01570&0.19401&0.364&69\\
0.100&~0&~0.00608&~0.06083&0.03219&-0.02219&0.23435&0.362&54\\
0.100&~1&~0.01843&~0.18427&0.04088&-0.01967&0.25652&0.354&47\\
0.100&~2&~0.02697&~0.26972&0.04452&-0.02527&0.27921&0.357&42\\
\hline
\multicolumn{9}{c}{3 Stoppings, $m=2,4,6$, K criterion}\\
\hline
0.300& &~0.00058&~0.00192&0.00054&~0.25571&0.34544&0.946&400\\
\hline
\multicolumn{9}{c}{3 Stoppings, $m=2,4,6$, probit criterion}\\
\hline
0.300&-2&-0.07020&-0.23401&0.03773&~0.07350&0.38610&0.645&154\\
0.300&-1&-0.03432&-0.11439&0.05279&~0.04818&0.48318&0.634&99\\
0.300&~0&~0.00908&~0.03025&0.07342&~0.03309&0.58506&0.601&54\\
0.300&~1&~0.03216&~0.10719&0.08456&~0.02594&0.63837&0.589&31\\
0.300&~2&~0.04043&~0.13477&0.08908&~0.01169&0.66917&0.588&19\\
\hline
\multicolumn{9}{c}{3 Stoppings, $m=2,4,6$, K criterion}\\
\hline
0.500& &0.00089&0.00177&0.00063&0.45195&0.54983&0.943&400\\
\hline
\multicolumn{9}{c}{3 Stoppings, $m=2,4,6$, probit criterion}\\
\hline
0.500&-2&-0.08033&-0.16066&0.04400&~0.25455&0.58479&0.801&233\\
0.500&-1&-0.04041&-0.08082&0.06181&~0.19978&0.71940&0.748&130\\
0.500&~0&~0.00142&~0.00284&0.08358&~0.17317&0.82967&0.684&54\\
0.500&~1&~0.02771&~0.05541&0.09759&~0.17129&0.88412&0.631&18\\
0.500&~2&~0.03284&~0.06567&0.10155&~0.15880&0.90687&0.609&8\\
\hline
\multicolumn{9}{c}{3 Stoppings, $m=2,4,6$, K criterion}\\
\hline
0.700& &-0.00057&-0.00082&0.00054&~0.65456&0.74429&0.946&400\\
\hline
\multicolumn{9}{c}{3 Stoppings, $m=2,4,6$, probit criterion}\\
\hline
0.700&-2&-0.06777&-0.09682&0.03800&~0.49359&0.77086&0.874&289\\
0.700&-1&-0.05260&-0.07515&0.05167&~0.43196&0.86283&0.784&177\\
0.700&~0&-0.00907&-0.01296&0.07342&~0.41494&0.96691&0.601&54\\
0.700&~1&~0.01732&~0.02474&0.08664&~0.43616&0.99848&0.471&11\\
0.700&~2&~0.02245&~0.03207&0.08951&~0.43130&1.01360&0.441&3\\
\hline
\multicolumn{9}{c}{3 Stoppings, $m=2,4,6$, K criterion}\\
\hline
0.900& &0.00038&0.00042&0.00022&0.87113&0.92963&0.950&400\\
\hline
\multicolumn{9}{c}{3 Stoppings, $m=2,4,6$, probit criterion}\\
\hline
0.900&-2&-0.02239&-0.02487&0.01270&~0.81194&0.94329&0.913&350\\
0.900&-1&-0.02509&-0.02788&0.02241&~0.76524&0.98457&0.703&216\\
0.900&~0&-0.00608&-0.00676&0.03219&~0.76565&1.02219&0.362&54\\
0.900&~1&~0.00709&~0.00787&0.03638&~0.78855&1.02562&0.217&8\\
0.900&~2&~0.00467&~0.00519&0.03988&~0.78210&1.02724&0.192&2\\
\hline
\hline
\end{longtable}
\end{center}

\end{document}